\documentclass[11 pt]{amsart}

\usepackage{amsmath, amssymb, mathtools, palatino, euler, epic, eepic, floatflt, microtype}
\usepackage[all]{xy}
\usepackage{graphicx,subcaption, epsfig}
\usepackage[usenames]{xcolor}
\usepackage{float}
\usepackage{tikz, tikz-cd, tikz-3dplot}
\usetikzlibrary{positioning}
\usetikzlibrary{shapes,arrows}
\usetikzlibrary{patterns}
\usetikzlibrary{decorations.markings}

\tikzset{->-/.style={decoration={  markings,  mark=at position #1 with {\arrow{>}}},postaction={decorate}}}

\tikzset{dot/.style={fill,circle,inner sep=1pt,minimum size=1pt}}

\usepackage{fullpage}
\usepackage{amscd,amsthm}
\usepackage{comment}
\usepackage{color}
\usepackage{newtxmath}

\usepackage[margin=1 in]{geometry}
\parindent=10pt
\parskip=3pt

\linespread{1.13}

\definecolor{darkblue}{rgb}{0,0,0.7}
\definecolor{darkred}{rgb}{0.7,0,0}
\usepackage[colorlinks,linkcolor=darkred, citecolor=darkblue,
pagebackref=true,pdftex]{hyperref}

\newcommand\defi[1]{\textit{\color{blue}#1}}    % Definitions in the text
     % notes

\DeclareMathOperator\conv{conv}
\DeclareMathOperator\Hom{\overrightarrow{\text{Hom}}}
\DeclareMathOperator\oldHom{Hom}
\DeclareMathOperator\indeg{indeg}
\DeclareMathOperator\outdeg{outdeg}
\DeclareMathOperator\Poset{Poset}

\DeclareMathOperator\outN{\overrightarrow{N}}
\DeclareMathOperator\inN{\overleftarrow{N}}
\DeclareMathOperator\N{\overrightarrow{{\mathcal N}}}
\DeclareMathOperator\Nin{\overleftarrow{{\mathcal N}}}
\DeclareMathOperator\conn{conn}
\DeclareMathOperator\Aut{Aut}
\DeclareMathOperator\ch{{\chi}_o}

\newcommand \bhom {\overset{\text{\tiny$\leftrightarrow$}}{\simeq}}
\newcommand \dhom {\overset{\text{\tiny$\rightarrow$}}{\simeq}} 
\newcommand \lhom {\overset{\text{\tiny$-$}}{\simeq}}

\newcommand\R{\mathbb{R}}
\newcommand\Z{\mathbb{Z}}

\newcommand\x{\mathbf{x}}

\newcommand\diam{diam}
\newcommand\tourn{\overrightarrow{K_n}}
\newcommand\tourm{\overrightarrow{K_m}}
\newcommand\id{{\rm id}}
\newcommand\lk{{\rm lk}}

    \newtheorem{thm}{Theorem}[section]
    \newtheorem{prop}[thm]{Proposition}
    \newtheorem{lemma}[thm]{Lemma}
    \newtheorem{claim}[thm]{Claim}
    
    \newtheorem{cor}[thm]{Corollary}
    \newtheorem{defn}[thm]{Definition}
    \newtheorem{rem}[thm]{Remark}
    \newtheorem{example}[thm]{Example}
    
    \newtheorem*{theorem}{Theorem}

\hbadness=99999 %to remove some underfull boxes
\hfuzz=9999pt %to remove some overfull boxes

\begin{document}

\title[Homomorphism complexes]{Homomorphism complexes, reconfiguration, \\
and homotopy for directed graphs}

\author{Anton Dochtermann}
\address{Texas State University}
\email{dochtermann@txstate.edu}
%\urladdr{\url{http://math.txstate.educornell.edu/~aalmousa}}

%\thanks{AA was partially supported by the NSF GRFP under Grant No. DGE-1650441.}
\author{Anurag Singh}
\address{Chennai Mathematical Institute}
\email{asinghiitg@gmail.com}

\date{\today}

\maketitle

\begin{abstract}
The neighborhood complex of a graph was introduced by Lov\'asz to provide topological lower bounds on chromatic number. More general  homomorphism complexes of graphs were further studied by Babson and Kozlov.  Such `Hom complexes' are also related to mixings of graph colorings and other reconfiguration problems, as well as a notion of discrete homotopy for graphs. Here we initiate the detailed study of Hom complexes for directed graphs (digraphs). For any pair of digraphs graphs $G$ and $H$, we consider the polyhedral complex $\Hom(G,H)$ that parametrizes the directed graph homomorphisms $f: G \rightarrow H$.  This construction can be seen as a special case of the poset structure on the set of multihomomorphisms in more general categories, as introduced by Kozlov, Matsushita, and others. Hom complexes of digraphs have applications in the study of chains in graded posets and cellular resolutions of monomial ideals.

 We study examples of directed $\Hom$ complexes and relate their topological properties to certain graph operations including products, adjunctions, and foldings. We introduce a notion of a neighborhood complex for a digraph and prove that its homotopy type is recovered as the $\Hom$ complex of homomorphisms from a directed edge.  We establish a number of results regarding the topology of directed neighborhood complexes, including the dependence on directed bipartite subgraphs, a digraph version of the Mycielski construction, as well as vanishing theorems for higher homology.  The $\Hom$ complexes of digraphs provide a natural framework for reconfiguration of homomorphisms of digraphs. Inspired by notions of directed graph colorings we study the connectivity of $\Hom(G,T_n)$ for $T_n$ a tournament, obtaining a complete answer for the case of transitive $T_n$. If $G = T_m$ is also a transitive tournament, we describe a connection to mixed subdivisions of dilated simplices. Finally we use paths in the internal hom objects of digraphs to define various notions of homotopy, and discuss connections to the topology of $\Hom$ complexes. 
\end{abstract}

\maketitle

\section{Introduction}\label{sec:intro}

The study of the chromatic number of graphs and more general graph homomorphisms is an active area of research (see for instance the recent monograph \cite{HelNes}). In his seminal proof of Kneser's conjecture,    Lov\'asz \cite{Lov} introduced topological methods to the study of graph colorings via the neighborhood complex ${\mathcal N}(G)$ of a graph $G$.  Using a Borsuk-Ulam type argument, he showed that the topology (connectivity) of ${\mathcal N}(G)$ provides a lower bound on $\chi(G)$, the chromatic number of $G$.

It turns out that the neighborhood complex can be recovered (up to homotopy type) as a special case of a more general notion of a \emph{homomorphism complex} $\oldHom(T,G)$, parametrizing all graph homomorphisms from $T$ to $G$ (with ${\mathcal N}(G)$ being the special case that $T = K_2$).  Details of this construction were worked out by Babson and Kozlov in \cite{BabKozCom}. Since these original works, several authors have studied homomorphism complexes and their applications to various combinatorial problems.  This includes bounds on chromatic number coming from odd cycles and other `test graphs' (\cite{BabKozPro}, \cite{DocSch}, \cite{MatHom}, \cite{Sch09}), connections to local chromatic number (\cite{SimTar}, \cite{SimTarVre}), higher connectivity of Hom complexes (\cite{Eng},\cite{CukKozHig}), complexes of graph homomorphisms between cycles \cite{CukKozHom}, connections to Stiefel manifolds (\cite{CsoLut}, \cite{Sch}), other notions of box complexes (\cite{CsoHom}, \cite{CLSW}, \cite{MatZie}, \cite{Ziv}),  applications to hypergraphs (\cite{AloFraLov}, \cite{IriKis}, \cite{LanZie}, \cite{MatMor},  \cite{Zie}), as well as connections to statistical physics (\cite{BriWin}, \cite{DocFre}) and random graphs \cite{Kah}.  The 1-skeleton of $\oldHom(T,G)$ has also been studied in the context of discrete homotopy of graphs (\cite{DocHom}, \cite{MatBox}) and mixing of graph colorings (for the case $G=K_n$) and other reconfiguration problems (\cite{CHJconnected}, \cite{CHJmixing}, \cite{Wro}).

As Kozlov \cite{KozCom} has pointed out, there is a notion of a \emph{Hom complex} in any category where the set of morphisms between two objects can be described as a subset of some collection of `vertex set' mappings.  In this case there is notion of a \emph{multihomomorphism} between two objects, and the set of all multihomomorphisms naturally forms a partially ordered set (poset), which in turn gives rise to a topological space.  In \cite{MatMor} Matsushita considered Hom complexes of `$r$-sets', which include hypergraphs and other generalizations of graphs.  He showed how such complexes can be modeled by simplicial sets and how many results from Hom complexes of graphs, including the $\times$-homotopy theory of \cite{DocHom}, can be extended to this setting.

In this work we apply these ideas to the class of \emph{directed} graphs (or digraphs for short).  For two digraphs $G$ and $H$, we consider the complex $\Hom(G,H)$ that parametrizes all directed graph homomorphisms $f:G \rightarrow H$.  We will see that many of the categorical properties of such complexes that were satisfied in the undirected setting carry over to this context. Some of our results follow directly from the general theory of homomorphism complexes alluded to above, but in many cases we require new tools and constructions that seem specific to the directed graph setting.  At the same time we will see that the $\Hom$ complexes of digraphs exhibit behavior that is not seen in the undirected setting, providing insight into the nuances of the category of directed graphs.

We consider homomorphism complexes of digraphs to be a natural area of study in its own right, but we see how these concepts also connect to several other areas of existing research.  For instance in \cite{BraHou} Braun and Hough study a morphism complex associated to maximal chains in a graded poset.  These complexes can be recovered as $\Hom(T,G(P))$ complexes of digraphs by considering the Hasse diagram $G(P)$ of the underlying poset $P$, and choosing $T$ to be a directed path.  Hence our complex $\Hom(G,H)$ for arbitrary digraphs $G$ and $H$ generalize this construction. 

Homomorphism complexes of digraphs also make an appearance in commutative algebra.  In \cite{DocEng} the first author and Engstr\"om showed that minimal resolutions of a class of monomial ideals they call \emph{cointerval} are supported on complexes that can be described as directed homomorphism complexes.  This extends work of Nagel and Reiner \cite{NagRei} where a \emph{complex of boxes} was shown to support resolutions of (squarefree) strongly stable ideals.  The idea of using homomorphism complexes to describe resolutions of monomial ideals was further investigated by Braun, Browder, and Klee in \cite{BraBroKle}, where they considered ideals defined by nondegenerate morphisms between simplicial complexes.  It is our hope that a thorough understanding of homomorphism complexes of digraphs may lead to further applications of this kind.

In addition, the $1$-skeleton of $\Hom(G,H)$ provides a natural model to study \emph{reconfiguration} questions regarding homomorphisms of digraphs.  In many areas of graph theory, one is interested in the relationships among solutions to a given problem for a specific graph.  A predetermined rule defines how one can move from one solution to another, leading to a \emph{reconfiguration graph} of solutions. This has practical applications when exact counting of solutions is not possible in reasonable time, in which case Markov chain simulation can be used. For these questions the connectedness, diameter, realizability, and algorithmic properties of the configuration graph are typically studied (see \cite{MynNas} for a survey). In our context a pair of homomorphisms of digraphs $f,g:G \rightarrow H$ will be considered adjacent if $f$ and $g$ agree on all but one vertex, defining a reconfiguration graph that corresponds to the $1$-skeleton of $\Hom(G,H)$. For undirected graphs, the connectivity and diameter of this graph are well studied with many results and open questions, but it seems that the analogous questions for digraphs have not been explored.

Finally, the connectivity and higher topology of $\Hom(G,H)$ is also a natural place to consider a notion of \emph{homotopy} and other categorical properties for directed graphs.   For any two digraphs $G$ and $H$, the 0-cells of $\Hom(G,H)$ are given by the homomorphisms $G \rightarrow H$, and hence paths in the 1-skeleton of $\Hom(G,H)$ provide a natural notion of homotopy between them.  This perspective was investigated for undirected graphs by the first author in \cite{DocHom} and with Schultz in \cite{DocSch}, where the resulting notion was called $\times$-homotopy.  In both the undirected and directed setting, homotopy can be described by certain paths in the exponential graph $H^G$, and can also be recovered via a certain `cylinder' object.  For digraphs, there is a subtlety regarding which notion of path in $H^G$ one considers, which leads to a hierarchy of homotopies in this setting. These constructions relate to and extend existing theories from the literature (see for instance \cite{BBDL}, \cite{GLMY}).

\subsection{Our results.}
We next give a brief overview of our contributions. Our first collection of results involve structural properties of the $\Hom$ complexes that parallel those of Hom complexes in the undirected setting, and which will be used throughout the paper.  Many of these involve graph operations that induce homotopy equivalences on the relevant complexes.   Although we usually think of the $\Hom$ complexes as topological spaces (in fact polyhedral complexes), we will typically work with them as posets.  In this setting we will often describe our homotopy equivalences in terms of poset maps and a notion of homotopy equivalence of posets, which in turn induce \emph{strong homotopy equivalences} on the underlying topological spaces. We refer to Section \ref{sec:collapses} for more details.

In Proposition \ref{prop:product} we prove that the strong homotopy type of $\Hom$ complexes are preserved under products in the second coordinate.  In Propositions \ref{prop:adjoint} and \ref{prop:folding} we apply results of Matsushita from \cite{MatMor} to show that $\Hom$ complexes behave well with respect to internal hom \emph{adjunction}, and also how a notion of \emph{directed folding} preserves strong homotopy type. Our main results in the setting can be summarized as follows, where we refer to Sections \ref{sec:definitions} and \ref{sec:structural} for any undefined terms.

\begin{theorem}[Propositions \ref{prop:product}, \ref{prop:adjoint}, \ref{prop:folding}]
For digraphs $A$, $B$, and $C$, we have strong homotopy equivalences
\begin{enumerate}
    \item $\Hom(A,B \times C) \simeq \Hom(A,B) \times \Hom(A,C)$;
        \smallskip
    \item 
    $\Hom(A \times B, C) \simeq \Hom(A,C^B).$
    \end{enumerate}
If $G \rightarrow G\backslash \{v\}$ is a directed folding, then for any graph $H$ we have strong homotopy equivalences
\begin{enumerate}
    \item[(3)] $\Hom(H,G) \simeq \Hom(H,G \backslash \{v\})$; 
        \smallskip
    \item[(4)] $\Hom(G \backslash \{v\}, H) \simeq \Hom(G,H)$.
\end{enumerate}
\end{theorem}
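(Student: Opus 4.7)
The plan is to identify each $\Hom$ complex with the order complex of its poset of multihomomorphisms (in the Kozlov--Matsushita sense recalled in Section~\ref{sec:collapses}), and to establish each equivalence by constructing order-preserving maps whose compositions are related to the identity by a fence of poset comparisons, which suffices for a strong homotopy equivalence. For (1), the projections $\pi_B$ and $\pi_C$ of the categorical product induce a poset map $\Phi: \eta \mapsto (\pi_B \circ \eta, \pi_C \circ \eta)$ from the multihomomorphism poset of $\Hom(A, B \times C)$ to that of $\Hom(A,B) \times \Hom(A,C)$, while the formula $\Psi(\alpha,\beta)(a) = \alpha(a) \times \beta(a)$ defines a reverse poset map satisfying $\Phi \circ \Psi = \id$ and $\Psi \circ \Phi \geq \id$ pointwise, since $\pi_B(S) \times \pi_C(S) \supseteq S$ for any $S \subseteq V(B) \times V(C)$.

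For (2), I would define the exponential digraph $C^B$ on vertex set $\{f : V(B) \to V(C)\}$ with an arrow $f \to g$ whenever $f(x) \to g(y)$ is an arrow of $C$ for every arrow $x \to y$ of $B$. The classical curry/uncurry bijection identifies digraph homomorphisms $A \times B \to C$ with homomorphisms $A \to C^B$, and I would extend this to the multihomomorphism level by sending $\mu: A \times B \to C$ to the assignment $a \mapsto \{f : f(b) \in \mu(a,b) \text{ for every } b \in V(B)\}$, with the inverse given by evaluation. After checking that these maps are mutually inverse and order-preserving, the equivalence follows; alternatively, this is a direct instance of Matsushita's internal-hom adjunction theorem from \cite{MatMor} once the category of digraphs is cast into his $r$-set framework.

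For (3) and (4), the hypothesis $\outN(v) \subseteq \outN(w)$ and $\inN(v) \subseteq \inN(w)$ makes the map $r: G \to G \setminus \{v\}$ sending $v \mapsto w$ (and fixing other vertices) into a digraph retraction, with inclusion $\iota$ as a right inverse. For (3), post-composition by $r$ and $\iota$ gives poset maps $r_*$ and $\iota_*$ between the multihomomorphism posets with $r_* \circ \iota_* = \id$; the composite $\iota_* \circ r_*$ sends $\eta$ to the multihomomorphism obtained by replacing every occurrence of $v$ in the images of $\eta$ by $w$. I would then verify that $\eta \cup (\iota_* \circ r_*)(\eta)$ is again a multihomomorphism, the two neighborhood containments being exactly what is needed to validate the new cross-edge conditions, yielding a fence $\eta \leq \eta \cup (\iota_* \circ r_*)(\eta) \geq (\iota_* \circ r_*)(\eta)$. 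Part (4) follows symmetrically by pre-composition, where the analogous fence replaces the value at $v$ by $\nu(v) \cup \nu(w)$.

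The main obstacle is expected to be the adjunction in (2): pinning down the directed exponential $C^B$ and checking that the curry/uncurry bijection extends to multihomomorphisms order-preservingly in both directions requires careful bookkeeping of source and target in the edge condition, without the symmetry available in the undirected case. By contrast, the product and folding parts reduce to fence arguments that generalize their undirected analogues cleanly, once one is careful to apply the two-sided neighborhood hypothesis to the new directed cross-terms introduced by the retraction.
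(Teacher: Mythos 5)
Your treatment of (1) matches the paper almost verbatim: the paper defines $i(\alpha,\beta)(v)=\alpha(v)\times\beta(v)$ and $c(\gamma)(v)=A_v\times B_v$ with $A_v,B_v$ the minimal coordinate projections of $\gamma(v)$, and observes $c\circ i=\id$ and $i\circ c\geq\id$; your $\Psi$ and $\Phi=(\pi_B\circ(-),\pi_C\circ(-))$ are exactly these maps. Where you differ is on (2)--(4): the paper simply imports \cite[Lemma 4.4]{MatMor} and \cite[Theorem 5.6]{MatMor} for the adjunction and the folding, whereas you sketch direct arguments at the multihomomorphism level. Your fence arguments for (3) and (4) are correct and self-contained. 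In (3), the only subtle case is when a cell $\eta$ has $v\in\eta(h)\cap\eta(h')$ for an edge $(h,h')$, which forces $(v,v)\in E(G)$; one must then know $(w,w)\in E(G)$ before $\eta(h)\cup\{w\}$ etc.\ can be a legal cell. This does follow from the folding hypotheses (from $(v,v)\in E(G)$ one gets $(w,v),(v,w)\in E(G)$, hence $w\in\inN(v)\subseteq\inN(w)$, hence $(w,w)\in E(G)$), so your claim that the fold is a retraction and that the union is a multihomomorphism holds, but it is worth making this chain of implications explicit since it is where a careless reader would get stuck.

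There is, however, a genuine error in your sketch of (2). You assert that the curry map $\mu\mapsto\bigl(a\mapsto\{f: f(b)\in\mu(a,b)\ \forall b\}\bigr)$ and the evaluation map $\nu\mapsto\bigl((a,b)\mapsto\{f(b): f\in\nu(a)\}\bigr)$ are \emph{mutually inverse} on the multihomomorphism posets. They are not: evaluating $\nu$ and then currying returns the coordinatewise product hull $\{f:\forall b\ \exists g\in\nu(a),\ f(b)=g(b)\}\supseteq\nu(a)$, which is in general strictly larger than $\nu(a)$. Unlike the curry/uncurry bijection on $\Hom_0$, the poset map $\Hom(A\times B,C)\to\Hom(A,C^B)$ is an inclusion onto a subposet, and what one actually has is the closure-pair relationship $\mathrm{eval}\circ\mathrm{curry}=\id$ and $\mathrm{curry}\circ\mathrm{eval}\geq\id$ --- the same structure you correctly identify for (1). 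That still gives a strong homotopy equivalence, so the overall strategy is salvageable with this correction; alternatively, as you note and as the paper actually does, one can simply invoke Matsushita's result. The rest of your bookkeeping for (2) (that both directions preserve the multihomomorphism condition and nonemptiness) is sound.
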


As an important consequence of (2) we can recover $\Hom(G,H)$ (up to strong homotopy type) as the clique complex of a certain graph ${\mathcal G}(H^G)$ associated to $G$ and $H$.  See Remark \ref{rem:clique} for details.

Our next results involve notions of \emph{out-} and \emph{in-neighborhood complexes} $\N(G)$ and $ \Nin (G)$, simplicial complexes associated to a digraph $G$ that mimic the Lov\'asz construction in the undirected setting (see Definition \ref{def:neighbor}). In Theorem \ref{thm:homandnbd} we prove that both complexes are homotopy equivalent to the homomorphism complex $\Hom(K_2,G)$ (and hence to each other). In Proposition \ref{prop:universal} we prove that any simplicial complex can be recovered up to isomorphism as $\N(G)$ for some digraph $G$.  This stands in contrast to the undirected setting, where the neighborhood complex must be homotopy equivalent to a space with a free ${\mathbb Z}_2$-action.

In the undirected graph setting, the Mycielskian $\mu(G)$ of a graph $G$ has important applications to the study of homomorphism complexes.  As Csorba \cite{Cso} has shown, the neighborhood complex of $\mu(G)$ is homotopy equivalent to $S({\mathcal N}(G))$, the \emph{suspension} of ${\mathcal N}(G)$. Motivated by this, we introduce a number of directed versions of the Mycielskian (see Definition \ref{def:Mycielskian}), and in Proposition \ref{prop:Mycielskian} we determine their effect on the homotopy type of the directed neighborhood complex. In particular, we prove that for any digraph $G$ there is a homotopy equivalence
\[\N(M^3(G))  \simeq S (\N(G)),\]
\noindent
where $M^3(G)$ is a certain directed Mycielskian of $G$ and $S(-)$ denotes suspension.

We next address the effect that directed bipartite subgraphs $\overrightarrow{K}_{m,n}$ (see Definition \ref{def:bipartite}) have on the topology of directed neighborhood complexes.  In Proposition \ref{prop:bipartite} we prove that if $G$ is a digraph \emph{not} containing a copy of $\overrightarrow{K}_{m,n}$ (for any $m+n = d$) then the complex $\N(G)$ admits a strong deformation retract onto a complex of dimension at most $d-3$.  Again this mimics an analogous property for neighborhood complexes of undirected graphs, first established by Kahle in \cite{Kah}.

Our main result in this section is Theorem \ref{thm:tophomology}. Here we prove that if $G$ is any simple digraph on at most $m = 2n+2$ vertices then $\tilde{H}_i(\N(G)) = 0$ for all $i \geq n$, and that this property holds for any induced subcomplex (such a complex is said to be \emph{$n$-Leray}).  We also show that this result is tight in the sense that there exists a digraph $T_m$ on $m=2n+3$ vertices with $\N(T_m) \simeq \mathbb{S}^n$. Our results for neighborhood complexes of digraphs can be summarized as follows.

\begin{theorem}[Theorems \ref{thm:homandnbd}, \ref{thm:tophomology}, Propositions \ref{prop:universal}, \ref{prop:Mycielskian}, \ref{prop:bipartite}]
For any digraph $G$, we have homotopy equivalences
\[\N(G) \simeq \Nin(G) \simeq \Hom(K_2, G).\]
Furthermore, the neighborhood complex $\N(G)$ also satisfies the following properties.
\begin{enumerate}
    \item (Universality) Any simplicial complex $X$ can be realized as $\N(G)$ for some choice of $G$.
    \smallskip
    \item(Suspension) For any digraph $G$, we have a homotopy equivalence
    \[\N(M^3(G))  \simeq S (\N(G)).\]
       % \smallskip
    \item(Bipartite subgraphs) If a digraph $G$ does not contain a copy of $\overrightarrow{K}_{m,n}$ (for any $m+n = d$), then the complex $\N(G)$ admits a strong deformation retract onto a complex of dimension at most $d-3$.
        \smallskip
    \item(Leray property) If $G$ is a simple digraph on at most $2n+2$ vertices, then $\N(G)$ is $n$-Leray.
    \end{enumerate}
\end{theorem}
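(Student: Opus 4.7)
The plan is to prove each assertion in turn, beginning with the three-way equivalence $\N(G)\simeq\Nin(G)\simeq\Hom(K_2,G)$. I would work poset-theoretically: the face poset of $\Hom(K_2,G)$ is indexed by pairs $(A,B)$ of nonempty vertex subsets with every $a\in A$ having an arc into every $b\in B$, ordered componentwise. Projection onto the first (resp.\ second) coordinate yields order-preserving maps into the face posets of $\N(G)$ and $\Nin(G)$. The preimage of $\{\sigma'\le\sigma\}$ under the first projection has a maximum, obtained by pairing $\sigma$ with its full set of common out-neighbors, hence is contractible, and Quillen's fiber lemma gives a strong homotopy equivalence. Equivalently one may apply the nerve lemma to the good cover $\N(G)=\bigcup_v\Delta(\inN(v))$, whose nerve coincides by definition with $\Nin(G)$.

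For universality, given a simplicial complex $X$ with facets $F_1,\dots,F_s$, I would build $G$ on vertex set $V(X)\cup\{v_1,\dots,v_s\}$ with arcs $u\to v_i$ whenever $u\in F_i$. Each $v_i$ is a sink, so no simplex of $\N(G)$ contains $v_i$, and the simplices are exactly the subsets of some $F_i$, namely the faces of $X$. For the suspension claim I would unpack the definition of $M^3(G)$, record the in-neighborhood of each new vertex, and exhibit $\N(M^3(G))$ as a union of two cones over $\N(G)$ meeting in $\N(G)$; this is the directed analog of Csorba's calculation \cite{Cso}, and the particular choice of Mycielskian variant $M^3$ is what makes the two apex vertices act symmetrically to yield a genuine suspension rather than a wedge or join. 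For the bipartite subgraph statement, every facet $\sigma$ of $\N(G)$ together with its set $\tau$ of common out-neighbors forms a directed bipartite clique $\overrightarrow{K}_{|\sigma|,|\tau|}\subseteq G$, so the hypothesis forces $|\sigma|+|\tau|<d$. Iteratively applying the directed folding of Proposition~\ref{prop:folding} to collapse dominated vertices then produces a strong deformation retract onto a subcomplex of dimension at most $d-3$, mirroring Kahle's argument in the undirected case.

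The Leray property is where I expect the main obstacle. A straightforward nerve-lemma application, identifying an induced subcomplex $Y=\N(G)[W]$ with a complex on at most $|V(G)|\le 2n+2$ vertices, yields only $\tilde H_i(Y)=0$ for $i\ge 2n+1$, far weaker than required. The oriented/simple hypothesis must enter through the disjointness $\outN(v)\cap\inN(v)=\emptyset$ and the bound $|\outN(v)|+|\inN(v)|\le m-1$, which prevents the covering simplices $\Delta(\inN(v)\cap W)$ from overlapping too greedily. My plan is to induct on $|V(G)|$ via a Mayer--Vietoris decomposition $Y=(Y\setminus v)\cup\mathrm{star}_Y(v)$ for a carefully chosen vertex $v$; the link $\lk_Y(v)$ should be identified as the neighborhood complex of a strictly smaller oriented digraph so that the inductive hypothesis delivers $\tilde H_{i-1}(\lk_Y(v))=0$ for $i-1\ge n-1$, closing the induction. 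Pinning down this link identification precisely, and handling the base cases, is the crux of the argument, with the tight tournament example $T_{2n+3}$ having $\N(T_{2n+3})\simeq\mathbb{S}^n$ serving as a guide to where the oriented hypothesis is actually needed.
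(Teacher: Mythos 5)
Your treatment of the three-way equivalence and the suspension formula matches the paper's approach in substance, with two small slips. The paper uses the Babson--Kozlov poset fiber lemma (Lemma~\ref{lem:posetfiber}), which is stronger than the classical Quillen fiber lemma and is needed precisely to get a \emph{strong} homotopy equivalence; you would need to verify its second hypothesis (the existence of a maximal element in $\varphi^{-1}(q)\cap P_{\le p}$), which the paper does explicitly. You have also swapped the coordinate projections: for a multihomomorphism $(A,B)\in\Hom(K_2,G)$, the first coordinate $A$ lands in $\Nin(G)$ and the second coordinate $B$ lands in $\N(G)$, not the other way around. The suspension sketch (two cones over $\N(G)$ glued along $\N(G)$) is exactly how the paper proceeds, expressing $\N(M^3(G))$ as a cone on $\N(G)\times 1$ with apex $[(g,2)]$ together with the full simplex on $V(\N(G))\times 1$.

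For universality, your arc direction is reversed. With arcs $u\to v_i$, each $v_i$ has positive in-degree (it is not a source), so $v_i$ \emph{is} a vertex of $\N(G)$; the vertices of $V(X)$ have in-degree zero and are excluded; the facets of $\N(G)$ are the sets $\outN(u)=\{v_i : u\in F_i\}$, giving a complex on the facet labels $\{v_1,\dots,v_s\}$ rather than on $V(X)$. (Your construction realizes $X$ as $\Nin(G)$, not as $\N(G)$.) The paper uses arcs $v_i\to u$ for $u\in F_i$, so $\outN(v_i)=F_i$ and $\N(G)\cong X$.

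The bipartite subgraph argument has a genuine gap. The hypothesis is that $G$ omits $\overrightarrow{K}_{m,n}$ for a \emph{fixed} pair $(m,n)$ with $m+n=d$; this places no bound on the dimension of $\N(G)$ itself. For example, a digraph can omit $\overrightarrow{K}_{2,5}$ while still having vertices of enormous out-degree, and hence facets of high dimension. Your reasoning that $|\sigma|+|\tau|<d$ for each facet $\sigma$ with common in-neighbors $\tau$ only yields $|\sigma|<n$ \emph{or} $|\tau|<m$, not a bound on $\dim\sigma$. Moreover the retraction you propose (iterated directed foldings via Proposition~\ref{prop:folding}) is not the mechanism: the paper instead defines the closure operator $\nu(X)=\outN(\inN(X))$ on the face poset, which gives a strong deformation retract onto the subposet of $\nu$-closed faces, and then shows that a chain of length $d-1$ of closed faces would force a $\overrightarrow{K}_{m,n}$ inside $G$. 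This is Kahle's argument adapted to the directed setting.

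For the Leray property your Mayer--Vietoris induction is, as you acknowledge, only a sketch, and it takes a different route from the paper. The paper's proof is more elementary and does not induct on the vertex count: it observes that on $\le 2n+2$ vertices some vertex has out-degree $\le n$; one removes all out-edges from the low out-degree vertices (changing only low-degree homology), passes to $\Nin$ (which has the same homotopy type), removes in-edges from the high out-degree vertices (which by simplicity have in-degree $\le n$), and is left with a directed bipartite digraph $G_2$, to which part~(3) applies. The Leray strengthening then follows by identifying each link $\lk_{\N(G)}(\sigma)$ with $\N(G_\sigma)$ for an explicit digraph $G_\sigma$ on the same vertex set, so that the vanishing theorem applies uniformly. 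If you want to pursue your Mayer--Vietoris approach you will need to say precisely what digraph realizes $\lk_Y(v)$ and to control what happens when $|V(G)|<2n+2$; the paper sidesteps these issues entirely.
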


We next turn our attention to homomorphism complexes of the form $\Hom(G,T_n)$, where $T_n$ is a \emph{tourmanent} (directed complete graph) on $n$ vertices.  Homomorphisms $G \rightarrow T_n$ are used to define a notion of oriented chromatic number $\chi_o(G)$ and is a natural place to look for results that compare to Hom complexes of undirected graphs. In particular, we are interested in \emph{reconfiguration} questions of homomorphisms into tournaments as an analogue of the well-studied question of mixings of (undirected) graph colorings. In this context one is interested in the connectivity and diameter of the complex $\Hom(G,T_n)$.

We mostly study the case that $T_n = \tourn$ is an \emph{acyclic} (or \emph{transitive}) tournament.  Our main result in this section is Theorem \ref{thm:tourncollapse}, where we prove that for any digraph $G$, the complex $\Hom(G,\overrightarrow{K_n})$ is either empty or collapsible. This again stands in stark contrast to the undirected setting, where even the connectivity of $\oldHom(G,K_3)$ is a subtle question (see Section \ref{sec:reconfig} for more discussion). Having established connectivity of $\Hom(G,\tourn)$, it is then natural to consider the diameter of its $1$-skeleton.  In Theorem \ref{thm:homgconnected} we prove that $(\Hom(G,\overrightarrow{K_n}))^{(1)}$ is either empty or has diameter at most $|V(G)|$. 

In the special case that $G = \tourm$ is itself a transitive tournament we can say more about the topology and polyhedral structure of $\Hom(\tourm, \tourn)$.  In Proposition \ref{prop:mixedsub} we show that such complexes can be recovered as certain \emph{mixed subdivisions} of a dilated simplex, and are hence homeomorphic to an $(n-m)$-dimensional ball for any $m \leq n$.  Our results for complexes of homomorphisms into transitive tournaments can be summarized as follows.

\begin{theorem}[Theorems \ref{thm:tourncollapse}, \ref{thm:homgconnected}, Proposition \ref{prop:mixedsub}]
Let $\tourn$ denote the transitive tournament on $n$ vertices. Then we have

\begin{enumerate}
\item
For any digraph $G$, the complex $\Hom(G,\tourn)$ is empty or contractible.
\smallskip

    \item 
    If $\Hom(G,\tourn)$ is nonempty, then the diameter of its $1$-skeleton satisfies
\[\diam((\Hom(G,\overrightarrow{K_n}))^{(1)}) \leq |V(G)|;\]

\item
If $m \leq n$, then $\Hom(\tourm, \tourn)$ is homeomorphic to a mixed subdivision of the dilated simplex $m\Delta^{n-m}$.
\end{enumerate}
\end{theorem}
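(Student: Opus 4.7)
For part~(1), if $\Hom(G, \tourn)$ is nonempty then $G$ is acyclic, so we can order $V(G) = (v_1, \ldots, v_k)$ in \emph{reverse} topological order (each $v_i$ a sink of $G \setminus \{v_1, \ldots, v_{i-1}\}$). Recursively define canonical values $\alpha_i \in [n]$ by $\alpha_i = \min\{\alpha_j - 1 : v_i \to v_j \text{ in } G\}$ (and $\alpha_i = n$ if $v_i$ has no out-neighbors); then $f^\star(v_i) := \alpha_i$ is a homomorphism. The plan is to construct an acyclic matching on cells of $\Hom(G, \tourn)$ with $f^\star$ as the unique critical cell, and invoke Forman's discrete Morse theory. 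For each non-critical multihomomorphism $\eta$ let $i(\eta)$ be the smallest index with $\eta(v_i) \neq \{\alpha_i\}$, and pair $\eta$ with $\eta \triangle \{(v_{i(\eta)}, \alpha_{i(\eta)})\}$. The key technical step is verifying this symmetric difference always yields a valid multihomomorphism: the definition of $\alpha_i$ forces $\alpha_{i(\eta)} < \alpha_j$ for each out-neighbor $v_j$ of $v_{i(\eta)}$ with $j < i(\eta)$, and the multihom condition at $\eta$ combined with $\eta(v_j) = \{\alpha_j\}$ yields $\min \eta(v_{i(\eta)}) \leq \alpha_{i(\eta)}$; so adjoining $\alpha_{i(\eta)}$ disturbs neither the incoming nor outgoing edge inequalities. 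Acyclicity follows because $i(\cdot)$ is non-decreasing along gradient paths, and any attempted cycle would force $\alpha_i \in \eta(v_i)$, violating the case-2 condition required of lower cells in matched pairs.

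For part~(2), order $V(G) = (v_1, \ldots, v_k)$ topologically and describe an explicit path from $f$ to $g$ in the $1$-skeleton. Use a two-phase sequence of single-vertex updates: Phase~1 processes vertices in \emph{reverse} topological order, updating $f(v_i) \mapsto g(v_i)$ for every $v_i$ with $g(v_i) > f(v_i)$; Phase~2 processes in \emph{forward} topological order, updating those with $g(v_i) < f(v_i)$. At each step the intermediate map is a valid homomorphism: in Phase~1, every in-neighbor $u$ still holds $f(u) < f(v_i) < g(v_i)$, while each out-neighbor $w$ holds either $g(w) > g(v_i)$ (if already updated) or $f(w) \geq g(w) > g(v_i)$ (if not), and the analogous check works in Phase~2 with the roles reversed. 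Each vertex is modified at most once, giving a path of length at most $|V(G)|$.

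For part~(3), cells of $\Hom(\tourm, \tourn)$ are tuples $(A_1, \ldots, A_m)$ of nonempty subsets of $[n]$ with $\max A_i < \min A_{i+1}$, each cell being the product $\prod_i \Delta^{|A_i|-1}$. Construct a homeomorphism onto $m\Delta^{n-m}$ by sending $(A_1, \ldots, A_m)$ to the Minkowski-sum cell $F_1 + F_2 + \cdots + F_m$, where $F_i := \{a - i : a \in A_i\} \subseteq \{0, 1, \ldots, n-m\}$ is viewed as a face of $\Delta^{n-m}$. The bound $\min A_i \geq i$ and $\max A_i \leq n-m+i$ (forced by the other $A_j$'s being nonempty) ensures $F_i \subseteq \{0, \ldots, n-m\}$, and the edge condition $\max A_i < \min A_{i+1}$ translates to $\max F_i \leq \min F_{i+1}$, which is exactly the combinatorial restriction defining the ``staircase'' mixed subdivision of $m\Delta^{n-m}$ obtained via the Cayley trick from the staircase triangulation of $\Delta^{m-1} \times \Delta^{n-m}$. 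Verification reduces to two points: (a) the cell-wise assignment is a piecewise-linear homeomorphism onto its Minkowski-sum image (a direct calculation using that the ranges of the $F_i$ overlap at most at endpoints); and (b) the mixed cells tile $m\Delta^{n-m}$, which can be checked by a dimension-by-dimension face count, matching the $\binom{n}{m}$ homomorphisms $\tourm \to \tourn$ to the $\binom{n}{m}$ lattice points of $m\Delta^{n-m}$ and propagating upward. The main obstacle is part~(1): both the validity of the symmetric-difference operation and the acyclicity of the matching depend delicately on the reverse-topological ordering together with the recursive definition of the $\alpha_i$, whereas parts~(2) and (3) reduce to direct verifications.
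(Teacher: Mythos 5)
Your proposal is correct on all three parts, but parts~(1) and~(2) follow genuinely different routes from the paper's; part~(3) is the same bijection. For part~(1), the paper argues by induction on $n$: letting $A$ be the set of sinks of $G$, it builds an acyclic matching that peels the color $n$ off one sink at a time, collapsing $\Hom(G,\tourn)$ onto the subcomplex $\{\alpha : \alpha(a)=\{n\}\ \forall a\in A\}\cong \Hom(G\setminus A, \overrightarrow{K}_{n-1})$, and recurses. You instead define a single global matching on all of $\Hom(G,\tourn)$ with the canonical homomorphism $f^\star(v_i)=\alpha_i$ as the unique critical cell. Your matching is well-defined: the computation $\max\eta(v_{i(\eta)})\le \alpha_{i(\eta)}$ (coming from the out-edge constraints against the singleton values $\{\alpha_j\}$, $j<i(\eta)$) shows that adjoining $\alpha_{i(\eta)}$ preserves the multihomomorphism condition. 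The one slip is in the acyclicity justification: the index $i(\cdot)$ is \emph{strictly decreasing} (not non-decreasing) along any step $\mu(x_j)\prec x_{j+1}$ with $x_{j+1}\in S$ and $x_{j+1}\neq x_j$. Indeed, writing $x_{j+1}=\mu(x_j)+\{(v_\ell,b)\}$, if $\ell\ge i(x_j)$ then $x_{j+1}(v_m)=\{\alpha_m\}$ for $m<i(x_j)$ and $\alpha_{i(x_j)}\notin x_{j+1}(v_{i(x_j)})$, so $i(x_{j+1})=i(x_j)$ but $x_{j+1}\notin S$ --- contradiction; hence $\ell<i(x_j)$ and $i(x_{j+1})=\ell<i(x_j)$. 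Strict monotonicity immediately precludes cycles, so your construction works, but as written the claim ``non-decreasing'' (which would allow constancy and does not by itself rule out cycles) should be replaced. The paper's inductive version avoids exhibiting a global decreasing statistic; yours has the benefit of exposing the explicit critical cell $f^\star$.

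For part~(2), the paper passes through the pointwise minimum $h=\min(f,g)$, proves $h$ is a homomorphism, and moves the coordinates where $h\ne f$ in increasing order of $g$-value; your two-phase argument passes through the pointwise maximum $\max(f,g)$ and updates in (reverse) topological order of $G$. Both are correct, both use the lattice structure of $\Hom_0(G,\tourn)$, and both give the identical bound $|\{v:f(v)\neq g(v)\}|\le |V(G)|$. For part~(3), your assignment $(A_1,\dots,A_m)\mapsto F_1+\cdots+F_m$ with $F_i=\{a-i:a\in A_i\}$ is, up to a uniform index shift, exactly the inverse of the paper's correspondence between cells of $\Hom(\tourm,\tourn)$ and noncrossing spanning trees of $K_{m,n-m+1}$; the two verification points you list are what the paper delegates to the staircase-subdivision literature.
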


It is an open question to determine the possible homotopy types of $\Hom(G,T_n)$ for other choices of tournaments $T_n$. In Proposition \ref{prop:spheres} we show that any sphere ${\mathbb S}^n$ can be recovered up to homotopy type as $\Hom(K_2, T_n)$ for some choice of tournament $T_n$.

Our last collection of results involves applications of $\Hom$ complexes to notions of \emph{homotopy} for directed graphs. For any two digraphs $G$ and $H$, one can see that a path in $\Hom(G,H)$ corresponds to a certain `bi-directed' path in the directed graph $H^G$, where $H^G$ is the \emph{internal hom} object associated to the categorical product. This defines a notion of \emph{bihomotopy} of digraph homomorphisms, a special case of the \emph{strong homotopy} of $r$-sets as developed by Matsushita \cite{MatMor}.  Using this we show that foldings of digraphs are intimately related to bihomotopy and show that bihomotopy equivalence of digraphs is characterized by topological properties of the $\Hom$ complexes.  In Theorem \ref{thm:dismant} we see that a digraph $G$ is dismantlable if and only if $\Hom(T,G)$ is connected for all digraphs $T$, providing a directed graph analogue of a result of Brightwell and Winkler from \cite{BriWinGibbs}.

Other notions of paths in $H^G$ lead to increasingly weaker notions of homotopy for digraph homomorphisms.  The existence of a \emph{directed} path in $H^G$ defines a notion of \emph{dihomotopy} $f \dhom g$, whereas a path in the underlying undirected graph of $H^G$ defines a \emph{line homotopy} $f \simeq g$.  Dihomotopy is an example of a `directed homotopy theory', whereas line homotopy is related to constructions of Grigor’yan, Lin, Muranov, and Yau from \cite{GLMY}.  Our results can be summarized as follows.

\begin{theorem}[Propositions \ref{prop:equiv}, \ref{prop:homstrength}, Theorem \ref{thm:dismant}]
For any two digraphs $G$ and $H$, paths in the directed graph $H^G$ give rise to notions of bidirected homotopy $\bhom$, directed homotopy $\dhom$, and line homotopy $\lhom$. 
\begin{enumerate}
\item
For any pair of digraph homomorphism $f,g: G \rightarrow H$, we have a strict hierarchy
\[f \bhom g \Rightarrow f \dhom g \Rightarrow f \lhom g.\]

\item
Bihomotopy $\bhom$ leads to a notion of bihomotopy equivalence which can be characterized in terms of the topology of $\Hom$ complexes.

\item
Foldings $G \rightarrow G-v$ preserve bihomotopy type, and each bihomotopy equivalence class contains a unique stiff representative.

\item
$G \bhom {\bf 1}$ if and only if $\Hom(T,G)$ is connected for any digraph $T$.

\end{enumerate}
\end{theorem}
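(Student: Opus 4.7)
The implications in (1) follow directly by unpacking the definitions: a bidirected edge in $H^G$ (both arcs present) is in particular a directed arc, and any directed arc yields an edge in the underlying undirected graph. Assembling along paths gives $f \bhom g \Rightarrow f \dhom g \Rightarrow f \lhom g$. For strictness I would exhibit small examples: taking $G=K_2$ and $H$ a directed cycle $\overrightarrow{C}_n$, rotating a homomorphism by one step along the cycle yields a path in $H^G$ that is directed but not bidirected; combining forward and backward shifts produces maps joined by an undirected path in $H^G$ but by no coherent directed path, separating $\dhom$ from $\lhom$.

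For (2), define $G$ and $H$ to be bihomotopy equivalent when there exist $\phi : G \to H$ and $\psi: H \to G$ with $\psi \phi \bhom \id_G$ and $\phi \psi \bhom \id_H$. The key observation is that $f \bhom g$ holds iff $f,g$ lie in the same path component of $\Hom(G,H)$, since the 1-skeleton of $\Hom(G,H)$ is exactly the bidirected graph of homomorphisms inside $H^G$. Composition induces poset maps $\Hom(T,\phi) : \Hom(T,G) \to \Hom(T,H)$, and a bihomotopy between compositions gives the required edges to make $\Hom(T,\phi)$ a strong homotopy equivalence. Invoking the adjunction of Proposition \ref{prop:adjoint} and then applying the argument to the test digraph $T=G$ (respectively $T=H$) recovers $\phi,\psi$ from the equivalences on $\Hom$-complexes, yielding the desired characterization.

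Part (3) uses the explicit structure of a fold: if $v$ is dominated by $u$, then $r: G \to G\setminus\{v\}$ (collapsing $v$ onto $u$) and the inclusion $\iota: G\setminus\{v\} \hookrightarrow G$ satisfy $r\iota = \id$, while $\iota r$ and $\id_G$ are joined by the multihomomorphism sending $v \mapsto \{u,v\}$ and all other vertices to themselves, producing a bidirected edge in $\Hom(G,G)$. Hence $\iota r \bhom \id_G$, so the pair is a bihomotopy equivalence (also confirming the Hom-complex equivalence of Proposition \ref{prop:folding}). For uniqueness of the stiff representative, suppose $G$ and $G'$ are both stiff and bihomotopy equivalent via $\phi, \psi$. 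A bihomotopy $\psi\phi \bhom \id_G$ provides a bidirected edge out of $\id_G$ in $\Hom(G,G)$, but stiffness precisely rules out any nontrivial multihomomorphism containing $\id_G$ (such a multihomomorphism would encode a dominating pair). Thus $\psi\phi = \id_G$ and symmetrically $\phi\psi = \id_{G'}$, so $\phi$ is an isomorphism.

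For (4), if $\Hom(T,G)$ is connected for every $T$, take $T=G$: then $\id_G$ is bidirectedly connected to a constant map $c_v$, giving $G \bhom \mathbf{1}$ and hence $G \lhom \mathbf{1}$ by the hierarchy. The hard direction is the converse, since $\lhom$ is the weakest of the three notions. The plan is to show that $G \lhom \mathbf{1}$ forces $G$ to be dismantlable: the line homotopy produces a sequence of maps $\id_G = f_0, f_1, \dots, f_N = c_v$ with consecutive terms connected by an edge (in either direction) in $H^G$, and by examining the first $f_i$ that differs from $\id_G$ one reads off a vertex $w$ dominated (in the appropriate directed sense) by another vertex, which is exactly a fold. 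Iterating yields a dismantling of $G$ to $\mathbf{1}$, at which point part (3) gives $\Hom(T,G) \simeq \Hom(T,\mathbf{1})$, which is a single point whenever nonempty. The main obstacle is this extraction of a combinatorial folding from a weak undirected-homotopy datum, which requires checking that the asymmetry permitted by $\lhom$ in the two edge directions still suffices to produce a directed domination of vertices compatible with foldings.
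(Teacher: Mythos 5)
Your proof of the implications in part (1) is correct, but the strictness example does not work. For $G=K_2$ the directed edge $(1,2)$ and $H=\overrightarrow{C}_n$ the loopless directed cycle, a homomorphism $f_i\colon 1\mapsto i,\ 2\mapsto i+1$ satisfies $(f_i,f_j)\in E(H^{K_2})$ precisely when $(i,j+1)\in E(\overrightarrow{C}_n)$, which forces $j=i$. So the looped subgraph of $H^{K_2}$ has no arcs between distinct homomorphisms, rotation by one step yields no arc in either direction, and all three homotopy relations coincide trivially on $\Hom_0(K_2,\overrightarrow{C}_n)$. A separating example has to be built more carefully; the paper constructs a bespoke pair of digraphs for this purpose (Figure \ref{fig:directedhomotopy}), and a simpler variant is $G={\bf 1}$ with $H$ a reflexive directed path, where the one-way arcs in $H^{\bf 1}\cong H$ already distinguish $\dhom$ from $\bhom$ and $\lhom$ from $\dhom$.

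Parts (2) and (3) are essentially sound and match the paper in spirit. The characterization in (2) is cited by the paper from Matsushita \cite{MatMor}, so your sketch of a direct argument (pushing a bihomotopy through the composition-induced poset maps $\Hom(T,-)$, then specializing $T$ to $H$ and $G$ to recover $\psi$) reconstructs that proof rather than quoting it; the ideas agree. In (3), the explicit $1$-cell $v\mapsto\{u,v\}$ and the stiffness argument that any nontrivial $1$-cell incident to $\id_G$ would encode a dominated vertex are exactly the content of Proposition \ref{prop:folding} and Corollary \ref{cor:stiff}.

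Part (4) is where the real issue lies, and the difficulty you flagged is not a gap in your argument but an error in the statement: the implication $G\lhom{\bf 1}\Rightarrow\Hom(T,G)$ connected for all $T$ is false, and the summary theorem should read $\bhom$ rather than $\lhom$ (matching what Theorem \ref{thm:dismant} actually proves). For a counterexample, take $G$ on vertex set $\{1,2\}$ with loops at both vertices and the single extra arc $(1,2)$. The homomorphisms $G\to G$ are $\id$, $c_1$, $c_2$, and one checks directly that $(c_1,\id)$, $(\id,c_2)$, $(c_1,c_2)$ are one-way arcs of $G^G$ with none of the reverses present. Hence the underlying undirected graph on $\{\id,c_1,c_2\}$ is connected, so $\id_G\lhom c_1$ and $G\lhom{\bf 1}$; yet $\Hom(G,G)$ has no $1$-cells and is three isolated points, and $G$ is stiff (neither vertex dominates the other), hence not dismantlable. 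This is exactly the mechanism you identified: a one-way arc out of $\id_G$ can alter several vertices at once and carries no domination data. The correct statement, proven in Theorem \ref{thm:dismant} by the induction you outlined, uses a \emph{bidirected} $1$-cell of $\Hom(G,G)$ incident to $\id_G$; such a cell necessarily changes a single vertex $v$ to a vertex dominating it, and the bidirectedness is essential to that extraction.
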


In the undirected setting, an important application of neighborhood and more general homomorphism complexes was the notion of `topological lower bounds' on chromatic number.  These results came from a general obstruction theory based on the ${\mathbb Z}_2$-equivariant topology of the underlying complexes.  In the directed graph setting we will see how a similar obstruction theory can be achieved, where the most natural example is given by taking the oriented cycle as $C_3$. We refer to Section \ref{sec:further} for more discussion, where we also consider some open questions.

\subsection{Paper outline}
The rest of the paper is organized as follows.  In Section \ref{sec:definitions} we discuss basic properties of the category of directed graphs and review some relevant tools from poset topology. Here we also provide the precise definition of the complex $\Hom(G,H)$ and discuss some examples.  In Section \ref{sec:structural} we establish structural results regarding the $\Hom$ complexes, including product and adjunction formulas. In Section \ref{sec:neighbor} we define the out- and in- neighborhood complexes $\N(G)$ and $\Nin(G)$ of a digraph $G$, and establish the homotopy equivalence with $\Hom(K_2,G)$.  Here we establish the other topological properties of $\N(G)$ discussed above, including universality results, the directed analogue of the Mycielski construction, the dependence on bipartite subgraphs, and vanishing of higher homology.  In Section \ref{sec:tourn} we study homomorphism complexes of the form $\Hom(G,T_n)$, where $T_n$ is a tournament. Here we prove that $\Hom(G,\tourn)$ is empty or contractible for any digraph $G$, and establish the bound on its diameter. We also discuss the connection to mixed subdivisions for the case of $G = \tourm$. In Section \ref{sec:homotopy} we define the various notions of homotopy of directed graphs and discuss how they relate to topological properties of the $\Hom$ and other complexes. Finally in Section \ref{sec:further} we consider open questions and directions for future research.

{\bf Acknowledgements.} We thank Nikola Milic\'evi\'c, Raphael Steiner, and Hiro Tanaka for insightful conversations. We are also grateful to the anonymous referees for their very helpful comments and corrections which greatly improved the paper. The first author is partially supported by Simons Foundation Grant $\#964659$. The second author is partially supported by the Start-up Research Grant SRG/2022/000314 from SERB, DST, India and by the Research Initiation Grant (RIG), IIT Bhilai, India.

\section{Definitions and objects of study}\label{sec:definitions}

\subsection{The category of directed graphs} \label{sec:graphs}

We briefly review some basics of directed graphs.  It seems these ideas are mostly well-known and scattered throughout the literature, but we collect the main ideas here.

For us a \defi{directed graph} (or \defi{digraph}) $G = (V(G), E(G))$ consists of a vertex set $V(G)$ and an edge set $E(G) \subset V(G) \times V(G)$.  Hence our digraphs have at most one (directed edge) from any vertex to another, and may have \emph{loops} of the form $(v,v)$.  Note that we do allow both $(v,w) \in E(G)$ and $(w,v) \in E(G)$. Unless otherwise specified the vertex set $V(G)$ will be finite. If $G$ is a digraph, we let $G^{{o}}$ denote the subgraph of $G$ induced on the set of looped vertices. If $(v,w)$ is an edge in $G$, we will often write $v \sim w$ and say that `$v$ is adjacent to $w$' (note that the order of $v$ and $w$ matters here). If $G$ is a digraph and $v \in V(G)$, we define the \defi{out-neighborhood} and \defi{in-neighborhood} of $v$ as
\[\outN_G(v) = \{w \in V(G): vw \in E(G)\},\]
\[\inN_G(v) = \{w \in V(G): wv \in E(G)\}.\]

For any two digraphs $G$ and $H$, a \defi{(directed) graph homomorphism} is a vertex set mapping $f: V(G) \rightarrow V(H)$ that preserves adjacency, so that if $(x,y) \in E(G)$ we have $(f(x), f(y)) \in E(H)$.  We will sometimes refer to a digraph homomorphism as a digraph \emph{map}.  We let $\Hom_0(G,H)$ denote the set of all digraph homomorphisms from $G$ to $H$.
 
If $G$ and $H$ are digraphs, the \defi{product} $G \times H$ is the directed graph with vertex set $V(G \times H) = V(G) \times V(H)$ and with adjacency given by $((g,h), (g^\prime, h^\prime)) \in E(G \times H)$ if $(g, g^\prime) \in E(G)$ and $(h, h^\prime) \in E(H)$.  This is indeed the categorical product in this category of digraphs, in the sense that given homomorphisms $f_1: T \rightarrow G$ and $f_2: T \rightarrow H$ there exists a unique map $f: T \rightarrow G \times H$ such that $f_1 = \pi_1 \circ f$ and $f_2 = \pi_2 \circ f$.
Here $\pi_1: G \times H \rightarrow G$ and $\pi_2:G \times H \rightarrow H$ are the natural projection homomorphisms.

As is the case with usual (undirected) graphs, the category of directed graphs enjoys an internal hom which is adjoint to the product.  Given digraphs $G$ and $H$ the \defi{exponential graph} $H^G$ is the directed graph with vertex set given by all vertex set mappings $f:V(G) \rightarrow V(H)$ with adjacency given by $(f,g)$ is a directed edge if whenever $(v,v^\prime) \in E(G)$ we have $(f(v), g(v^\prime)) \in E(H)$.  With this definition one can check that for any digraphs $G$, $H$, and $K$, we have a natural bijection of sets
\[\varphi: \Hom_0(G \times H, K) \xrightarrow{\cong} \Hom_0(G, K^H).\]
\noindent
Here a homomorphism $f:G \times H \rightarrow K$ is sent to $\varphi(f): G \rightarrow K^H$, where $\varphi(f)(g)(h) = f(g,h)$ for any $g \in G$ and $h \in H$.

\subsection{Collapses, posets and strong homotopy} \label{sec:collapses}

We assume the reader is familiar with basic notions of simplicial complexes and homotopy theory, but we review some concepts that we will need.  

For a simplicial complex $X$ we let $\tilde{H}_i(X)$ denote its $i$th integral reduced homology group. For a face $\sigma \in X$, the \defi{link} $\lk_X(\sigma)$ is the subcomplex defined by
\[\lk_X(\sigma) = \{\tau \in X: \tau \cap \sigma = \emptyset, \tau \cup \sigma \in X\}.\]

We will often identify a simplicial complex $X$ with its geometric realization and for instance will write $X \simeq Y$ to denote that $X$ and $Y$ are homotopy equivalent.  For any integer $n$ we let $\Delta^n$ denote the $n$-dimensional simplex.  For a finite set $S$, we let $\Delta^S$ denote the $(|S|-1)$-dimensional simplex whose vertices are the elements of $S$.

Suppose $X$ is a simplicial complex and $\sigma, \tau \in X$ are faces such that $\tau \subsetneq \sigma$ and  $\sigma$ is the only maximal face (facet) in $X$ that contains $\tau$.
  A  \defi{simplicial collapse} of $X$ is the simplicial complex $Y$ obtained from $X$ by
  removing all those simplices $\gamma$  of $X$ such that
  $\tau \subseteq \gamma \subseteq \sigma$. Here $\tau$ is called a \defi{free face} of
  $\sigma$ and $(\tau, \sigma)$ is called a \defi{collapsible pair}. We denote this collapse
  by $X \Searrow Y$. Observe that if 
  $X \Searrow Y$ then the geometric realizations of $X $ and $Y$ are homotopy equivalent.
 In this article, we write $X \Searrow  \langle A_1,A_2,\dots,A_r \rangle$ to mean that $X$ collapses onto a subcomplex generated by the faces $A_1,\dots, A_r$.

As we are mostly dealing with (realizations of) partially ordered sets (posets) and poset maps, we recall some relevant constructions specific to this context.  First note that if ${X}$ is a simplicial complex its \defi{face poset} ${\mathcal P}(X)$ is a partially ordered set given by inclusion. On the other hand, if $P = (P,\leq)$ is a poset, we use ${\mathcal O}(P)$ to denote the \defi{order complex} of $P$, by definition the simplicial complex on vertex set $P$ with $d$-dimensional simplices given by all chains $p_{i_0} < p_{i_1} < \cdots < p_{i_d}$.  Slightly abusing notation we use $|P|$ to denote the \defi{geometric realization} of the simplicial complex ${\mathcal O}(P)$.    Furthermore we will often speak about topological properties of $P$ itself, by which we mean those $|P|$. Finally note that if $X$ is a simplicial complex (or more generally a regular CW complex) then $|{\mathcal O}({\mathcal P}(X))|$ recovers the \defi{barycentric subdivision} of the complex $X$.

If $P$ and $Q$ are posets, a \defi{poset map} (or \emph{order-preserving map}) $f:P \rightarrow Q$ is a function of the underlying sets such that for all $x,y \in P$, if $x \leq_P y$ then $f(x) \leq_Q f(y)$. A poset map $f: P \rightarrow Q$ induces a simplicial map ${\mathcal O}(f): {\mathcal O}(P) \rightarrow {\mathcal O}(Q)$ of simplicial complexes and hence a continuous map $|f|: |P| \rightarrow |Q|$ on the underlying geometric realizations.  For posets $P$ and $Q$, we define the \defi{product} $P \times Q$ to be the poset with elements $\{(p,q): p \in P, q \in Q$ with relations $(p,q) \leq (p^\prime, q^\prime)$ if $p \leq_P p^\prime$ and $q \leq_Q q^\prime$.

If $P$ and $Q$ are posets, we define $\Poset(P,Q)$ to be the poset with elements given by all poset maps $f: P \rightarrow Q$, with relation $f \leq g$ if $f(x) \leq g(x)$ for every element $x \in P$.  We say that two poset maps $f,g: P \rightarrow Q$ are \defi{strongly homotopic} if they are in the same connected component of $|\Poset(P,Q)|$. If $P$ and $Q$ are strongly homotopic, then the simplicial complexes ${\mathcal O}(P)$ and ${\mathcal O}(Q)$ are strongly homotopic in the sense of \cite{BarMin}, and in particular $|P|$ and $|Q|$ are simple homotopy equivalent.

In particular, if $c:P \rightarrow P$ satisfies $c \circ c = c$ and $c(p) \geq p$ for all $p \in P$ then we say that $c$ is a \defi{closure map}.  In this case $c:P \rightarrow c(P)$ induces a strong deformation retract of the associated spaces, see \cite{Bjo95} for more discussion. We will also need the following poset fiber theorem, first established by Babson and Kozlov in \cite{BabKozCom}.

\begin{lemma}\cite[Proposition 3.2]{BabKozCom}\label{lem:posetfiber}
Let $\varphi: P \rightarrow Q$ be a map of finite posets.  If $\varphi$ satisfies
\begin{itemize}
    \item 
    $\varphi^{-1}(q)$ is contractible for all $q \in Q$, and
    \item
    for every $p \in P$ and $q \in Q$ with $\varphi(p) \geq q$ the poset $\varphi^{-1}(q) \cap P_{\leq p}$ has a maximal element,
\end{itemize}
then $\varphi$ is a homotopy equivalence.
\end{lemma}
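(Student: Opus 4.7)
The plan is to deduce the lemma from Quillen's Theorem~A for posets, which says that a poset map $f:P\to Q$ induces a homotopy equivalence on realizations whenever the preimage $f^{-1}(Q_{\geq q})$ is contractible for every $q\in Q$ (the form using $Q_{\leq q}$ is equivalent by passing to opposite posets). The two hypotheses in the statement are arranged to give precisely this, with hypothesis~(i) controlling the actual fiber $\varphi^{-1}(q)$ and hypothesis~(ii) allowing one to deformation retract the larger set $\varphi^{-1}(Q_{\geq q})$ down onto $\varphi^{-1}(q)$.

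Fixing $q\in Q$, I would use hypothesis~(ii) to define a map $c_q:\varphi^{-1}(Q_{\geq q})\to \varphi^{-1}(q)$ sending $p$ to the maximum of the subposet $\varphi^{-1}(q)\cap P_{\leq p}$, reading ``has a maximal element'' in the natural way as the assertion that this finite subposet has a (necessarily unique) greatest element. The routine checks are that $c_q$ is order-preserving (if $p\leq p'$ then $\varphi^{-1}(q)\cap P_{\leq p}\subseteq \varphi^{-1}(q)\cap P_{\leq p'}$, so the maxima compare the correct way), that $c_q(p)\leq p$ for every $p$ in the domain, and that $c_q$ restricts to the identity on $\varphi^{-1}(q)$. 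Together these three properties identify $c_q$ as an idempotent poset map sitting everywhere below the identity and landing in $\varphi^{-1}(q)$. By the standard principle that pointwise comparable poset maps induce homotopic realizations, the inclusion $\varphi^{-1}(q)\hookrightarrow \varphi^{-1}(Q_{\geq q})$ and $c_q$ are mutually inverse homotopy equivalences, so $c_q$ is a strong deformation retract.

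Combining this with hypothesis~(i), we conclude that $\varphi^{-1}(Q_{\geq q})\simeq \varphi^{-1}(q)\simeq \ast$ for every $q\in Q$, and Quillen's Theorem~A then gives that $|\varphi|$ is a homotopy equivalence. The main obstacle is essentially interpretive rather than technical: one must read ``maximal element'' as ``greatest element,'' since otherwise the retraction $c_q$ cannot be defined canonically and order-preservation fails. Once that reading is adopted the argument is a clean bookkeeping of Quillen~A with the elementary homotopy-of-poset-maps observation, and no further topological machinery is required.
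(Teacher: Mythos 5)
Your proof is correct and is essentially the argument of Babson and Kozlov in the cited reference (the paper itself does not reprove this; it cites \cite[Proposition 3.2]{BabKozCom}): one applies Quillen's fiber lemma after using hypothesis (ii) to define an order-preserving idempotent $c_q \leq \mathrm{id}$ retracting $\varphi^{-1}(Q_{\geq q})$ onto $\varphi^{-1}(q)$, which is contractible by hypothesis (i). Your reading of ``maximal element'' as ``greatest element'' is the intended one and is needed for $c_q$ to be well-defined and order-preserving.
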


\subsection{Discrete Morse theory}
We will need some basics from discrete Morse theory, as developed by Forman in \cite{For}.  The theory is typically described in terms of operations on a simplicial complex but can equally well be phrased in terms of underlying posets, which is the perspective we use here.

For a poset $P$ with covering relation $\prec$, a \defi{partial matching} on $P$ is by definition a subset $S \subset P$ and an injective function $\mu: S \rightarrow P \backslash S$ such that $\mu(x) \prec x$ for all $x \in S$.  The elements of $P \backslash (S \cup \mu(S))$ that are not matched are called \defi{critical}.  A matching on $S \subset P$ is \defi{acyclic} if there does not exists a sequence $x_1, \dots, x_t \subset S \cup \mu(S)$ with the property that $\mu(x_1) \prec x_2, \mu(x_2) \prec x_3, \dots, \mu(x_t) \prec x_1$. 

\begin{prop}\cite[Proposition 5.4]{For} \label{prop:dmtmain}
Let $X$ be a polyhedral complex (or any regular CW-complex) and suppose $Y \subset X$ is a subcomplex.  Then the following are equivalent.
\begin{itemize}
    \item There exists a sequence of collapses $X \Searrow Y.$
    \item There exists a partial acyclic matching on ${\mathcal P}(X)$ with critical cells given by ${\mathcal P}(Y)$.
    \end{itemize}
\end{prop}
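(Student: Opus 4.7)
The plan is to establish the two implications separately; both hinge on matching the temporal order of elementary collapses with the combinatorial acyclicity of $\mu$.

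For the forward direction, I would start with a collapse sequence $X = X_0 \Searrow X_1 \Searrow \cdots \Searrow X_N = Y$, where the $i$th step eliminates a collapsible pair $(\tau_i, \sigma_i)$ with $\tau_i \prec \sigma_i$. Set $S = \{\sigma_1, \dots, \sigma_N\}$ and define $\mu(\sigma_i) = \tau_i$. Injectivity of $\mu$ follows because once $\tau_i$ is removed at time $i$ it cannot reappear as the free face of a later collapse. The cells outside $S \cup \mu(S)$ are precisely those surviving every collapse, namely the cells of $Y$, so the critical set equals $\mathcal{P}(Y)$. For acyclicity, suppose for contradiction we have a cycle
\[
\mu(x_1) \prec x_2 \succ \mu(x_2) \prec x_3 \succ \cdots \prec x_t \succ \mu(x_t) \prec x_1
\]
with the $x_j$ distinct. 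Write $t(x_j)$ for the time at which the pair $(\mu(x_j), x_j)$ is collapsed. At time $t(x_j)$, the cell $\mu(x_j)$ must be a free face of $x_j$ in the current complex; but $x_{j+1}$ (indices cyclic) is a coface of $\mu(x_j)$ distinct from $x_j$, so $x_{j+1}$ must already have been removed as the larger cell of its own pair, giving $t(x_{j+1}) < t(x_j)$. Iterating around the cycle produces the impossible chain $t(x_1) > t(x_2) > \cdots > t(x_t) > t(x_1)$.

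For the reverse direction, I would argue by induction on the number of matched pairs, producing one legitimate elementary collapse at a time. The key structural observation is that \emph{no coface of a matched cell is critical}: if $(\mu(\sigma), \sigma)$ is matched and $\rho$ is any coface of $\mu(\sigma)$, then $\rho$ being critical puts $\rho \in Y$, which forces $\mu(\sigma) \in Y$ (since $Y$ is a subcomplex), making $\mu(\sigma)$ critical and contradicting $\mu(\sigma) \in \mu(S)$. To locate an initial collapsible pair I would form the modified Hasse diagram $H_\mu$ of $\mathcal{P}(X)$ in which every matched covering relation is reversed. Acyclicity of $\mu$ is equivalent to $H_\mu$ being a DAG, and hence so is its restriction to the set of matched cells; this restriction therefore admits a source $v$. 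A short case analysis using the no-critical-coface remark shows $v$ cannot be an element of $S$ (it would receive the reversed edge from its partner), and that when $v = \mu(\sigma) \in \mu(S)$ the cell $\sigma$ must be the unique coface of $v$ in $X$. Hence $(v,\sigma)$ is a genuine free face-facet pair and we may perform the elementary collapse. The resulting complex inherits the restricted matching, which is still acyclic and has the same critical set $\mathcal{P}(Y)$, so the inductive hypothesis completes the argument.

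The main obstacle is precisely this source lemma in the reverse direction: one must be sure that the source $v$ in the matched subgraph of $H_\mu$ produces a collapsible pair in the \emph{ambient} complex (not in some auxiliary object), and this is exactly where the hypothesis on the critical set being $\mathcal{P}(Y)$, together with $Y$ being a subcomplex, does the work. Once this is cleanly verified, everything else—the verification that the restricted matching remains acyclic, that its critical set is still $\mathcal{P}(Y)$, and that the induction terminates at $Y$—is routine bookkeeping.
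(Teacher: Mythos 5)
This proposition is quoted from Forman's paper and is not proved in the present work, so there is no internal argument to compare against; what you have written is the standard discrete Morse theory proof, and in substance it is correct.

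Two small points are worth spelling out. First, the paper's definition of a collapsible pair $(\tau,\sigma)$ permits $\dim\sigma - \dim\tau > 1$, so in the forward direction one should first decompose each such collapse into elementary (codimension-one) collapses before extracting the matching; this is routine, and your timestamp argument for acyclicity then goes through unchanged. Second, in the reverse direction, the case analysis you gesture at actually establishes only that $\sigma$ is the unique cell \emph{covering} $v=\mu(\sigma)$: every other cover of $v$ is either critical (excluded by your no-critical-coface observation, since critical cells form a subcomplex) or matched (excluded by the source condition). To upgrade ``unique cover'' to ``unique proper coface,'' and hence to a genuine free pair, you need the diamond property of the face poset of a regular CW-complex: if $\sigma$ had a cover $\rho$, then the interval $[v,\rho]$ would contain a second cell $\sigma'\neq\sigma$ covering $v$, contradicting uniqueness. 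Thus $\sigma$ is a facet and $(v,\sigma)$ is collapsible. With that bridge made explicit, the induction you describe closes the argument, and the restricted matching on $X\setminus\{v,\sigma\}$ is indeed still acyclic with critical set $\mathcal{P}(Y)$.
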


\subsection{Homomorphism complexes}

We next turn to the main definition of the paper.   For this we follow closely the construction of the Hom complex of undirected graphs as studied in \cite{BabKozCom}. Here if $G$ and $H$ are directed graphs we define a \defi{multihomomorphism} to be a map $\alpha: V(G) \rightarrow 2^{V(H)} \backslash \{\emptyset\}$ such that if $(v,w) \in E(G)$ we have $\alpha(v) \times \alpha(w) \subset E(H)$.  We let $\Delta^{V(H)}$ denote the simplex whose vertex set is $V(H)$, and use $C(G,H)$ to denote the polyhedral complex given by the direct product $\prod_{x \in V(G)} \Delta^{V(H)}$. The cells of $C(G,H)$ are given by direct products of simplices $\prod_{x \in V(G)} \sigma_x$.

\begin{defn}
Suppose $G$ and $H$ are digraphs. Then \defi{$\Hom(G,H)$} is the polyhedral subcomplex of $C(G,H)$ with cells given by all multihomomorphisms $\alpha: V(G) \rightarrow 2^{V(H)} \backslash \{\emptyset\}$.  An element
\[\prod_{x \in V(G)} \sigma_x \in C(G,H)\]
\noindent
is in $\Hom(G,H)$ if and only if for all $(x,y) \in E(G)$ we have $(u,v) \in E(H)$ for any $u \in \sigma_x$ and $v \in \sigma_y$. In particular, the vertex set of $\Hom(G,H)$ is given by the set $\Hom_0(G,H)$ of all digraph homomorphisms $f:G \rightarrow H$.
\end{defn}

Note that for digraphs $G$ and $H$ the set of all multihomomorphisms naturally forms a poset $P(G,H)$, where $\alpha \leq \beta$ if $\alpha(v) \subset \beta(v)$ for all $v \in V(G)$.  The poset $P(G,H)$ can be seen to be the face poset of the regular CW-complex $\Hom(G,H)$.  As above we let $|P(G,H)|$ denote the geometric realization of this poset. We then have that $|P(G,H)|$ is the \emph{barycentric subdivision} of $\Hom(G,H)$, so that in particular $|P(G,H)|$ and $\Hom(G,H)$ are homeomorphic.  In many of our proofs we will think of $\Hom(G,H)$ as poset, by which we mean the poset $P(G,H)$ described above.

As we mentioned above, Kozlov has pointed out that there is a notion of a `homomorphism complex' for any category of objects where the underlying morphisms are set maps satisfying some condition.  Cells are defined as set valued functions where each restriction is a morphism in the category, see \cite{KozCom} for more details. A notion of homomorphism complex for `$r$-sets' has been studied by Matsushita in \cite{MatMor}.  We further discuss this notion in Section \ref{sec:structural}.

If $G$ and $H$ are directed graphs, we let $\overline{G}$ and $\overline{H}$ denote the underlying undirected graphs.  We note that $\Hom(G,H) \subset \oldHom(\overline{G},\overline{H})$, since an element $\alpha \in \Hom(G,H)$ is in particular a multihomomorphism of the underlying undirected graphs. We can also recover any complex $\oldHom(G,H)$  of undirected graphs via our construction in the `usual' way of embedding graphs into the category of directed graphs.   Namely, given undirected graphs $G$ and $H$ we construct digraphs $\hat{G}$ and $\hat{H}$ where for each edge in the underlying graph we introduce a  directed edge in both directions. One can then see that $\Hom(\hat{G}, \hat{H}) = \oldHom(G,H)$.  Hence the construction of $\Hom$ complexes for digraphs is in particular a generalization of the theory for graphs.

We also point that if $P$ and $Q$ are graded posets then a \emph{strictly} order preserving poset map $P \rightarrow Q$ can be thought of as a homomorphism of digraphs $G(P) \rightarrow G(Q)$, where $G(P)$ is the Hasse diagram of $P$ thought of as a digraph.  Hence our construction of $\Hom$ complexes for digraphs generalizes the work of Braun and Hough in \cite{BraHou}, where the topology of complexes of maximal chains in a graded poset is studied.

\subsubsection{Examples}

In this section we provide some examples of $\Hom$ complexes, some of which will be referred to in later sections.  Here we let $\overrightarrow{L}_n$, $\overrightarrow{C}_n$ and $\tourn$ denote the directed path graph $1\rightarrow 2\rightarrow \dots \rightarrow n$, directed cycle graph $1\rightarrow 2\rightarrow \dots \rightarrow n\rightarrow 1$, and the transitive $n$-tournament (see Section \ref{sec:tourn} for a formal definition), respectively.  We have the following easy observations. 
\begin{itemize}
\item $\Hom(\overrightarrow{L}_r, \overrightarrow{L}_s)$ is a disjoint union of $s-r+1$ points if $s\geq r$, and is empty otherwise.
\smallskip
\item $\Hom(\overrightarrow{C}_r, \overrightarrow{C}_s)$ is a disjoint union of $s$ points if $s$ divides $r$, and is empty otherwise.
\smallskip
\item $\Hom(\overrightarrow{K}_{n-1},\tourn)$ is a path on $n$ vertices.
\end{itemize}
For the next examples we define graphs $C_3^1$ and $O_6$ as follows.
\begin{itemize}
    \item Let $C_3^1$ be the digraph on vertex set $[4]$ and $E(C_3^1)=E(\overrightarrow{C}_{3})\sqcup \{(4,1)\}$.
    \smallskip

    \item
    Let $O_6$ be the digraph given by the 1-skeleton of the octahedron with orientation as in Figure \ref{fig:example5}.
    
\end{itemize}

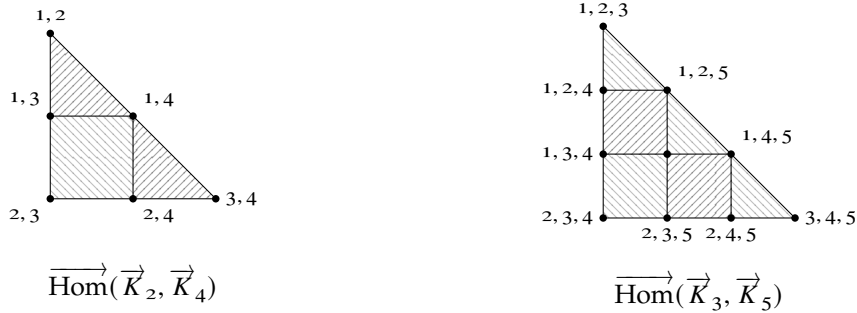
\begin{figure}[H]
%\captionsetup[subfigure]{labelformat=empty}
\begin{subfigure}[]{0.5\textwidth}
    \centering
    \begin{tikzpicture}[scale=1.1]
   % NODES
   \pattern[pattern=north west lines, opacity=0.5] (0,0)--(1,0)--(1,-1)--(0,-1)--cycle;
   \pattern[pattern=north east lines, opacity=0.6] (0,0)--(1,0)--(0,1)--cycle;
   \pattern[pattern=north east lines, opacity=0.6] (2,-1)--(1,0)--(1,-1)--cycle;
   
   \path[draw] (0,0)node[dot] {} node[anchor=south east] {\tiny{$1,3$}} -- (1,0)node[dot] {} node[anchor=south west] {\tiny{$1,4$}} -- (1,-1) node[dot] {} node[anchor=north west] {\tiny{$2,4$}} -- (0,-1) node[dot] {} node[anchor=north east] {\tiny{$2,3$}}--cycle;
   \draw[] (0,0)--(0,1)node[dot] {} node[anchor=south] {\tiny{$1,2$}} --(1,0)-- (2,-1)node[dot] {} node[anchor=west] {\tiny{$3,4$}} --(1,-1);
   \end{tikzpicture}\vspace{0.25cm}
    \caption*{$\Hom(\overrightarrow{K}_{2},\overrightarrow{K}_{4})$}\label{fig:homk2k4}
\end{subfigure}
\begin{subfigure}[]{0.4\textwidth}
    \centering
    \begin{tikzpicture}[scale=0.85]
   % NODES
   \pattern[pattern=north west lines,opacity=0.5] (0,0) --(0,1) --(1,1)--(1,0) --cycle;
   
   \pattern[pattern=north east lines,opacity=0.6] (0,1)-- (1,1)--(1,2) --(0,2) --cycle;
   \pattern[pattern=north east lines,opacity=0.6] (1,1) --(1,0)-- (2,0) --(2,1) -- cycle;
   
%   \draw[pattern=north east lines, pattern color=blue] (0,0) rectangle (2,4);
   
   \pattern[pattern=north west lines,opacity=0.5] (1,2)--(1,1)--(2,1)-- cycle;
   
   \pattern[pattern=north west lines,opacity=0.5] (0,3)--(0,2)--(1,2)-- cycle;
   
   \pattern[pattern=north west lines,opacity=0.5] (2,0)--(3,0)--(2,1)-- cycle;
   
   \draw[] (0,0) node[dot] {} node[anchor=east] {\tiny{$2,3,4$}}--(1,0) node[dot] {} node[anchor=north]  {\tiny{$2,3,5$}} --(2,0)node[dot] {} node[anchor=north] {\tiny{$2,4,5$}}-- (3,0)node[dot] {} node[anchor=west] {\tiny{$3,4,5$}} --(2,1) node[dot] {} node[anchor=south west] {\tiny{$1,4,5$}}--(1,2)node[dot] {} node[anchor=south west] {\tiny{$1,2,5$}}--(0,3)node[dot] {} node[anchor=south] {\tiny{$1,2,3$}}--(0,2)--(0,1)--cycle;
   
   \draw (0,2)node[dot] {} node[anchor=east] {\tiny{$1,2,4$}} --(1,2)--(1,1)--(1,0);
   \draw (0,1)node[dot] {} node[anchor=east] {\tiny{$1,3,4$}}--(1,1) node[dot] {}--(2,1)--(2,0);
   \end{tikzpicture}\vspace{0.1cm}
    \caption*{$\Hom(\overrightarrow{K}_{3},\overrightarrow{K}_{5})$}\label{fig:homk3k5}
\end{subfigure}
   \caption{Examples of $\Hom$ complexes between acyclic tournaments.}
   \label{fig:example1}
\end{figure}

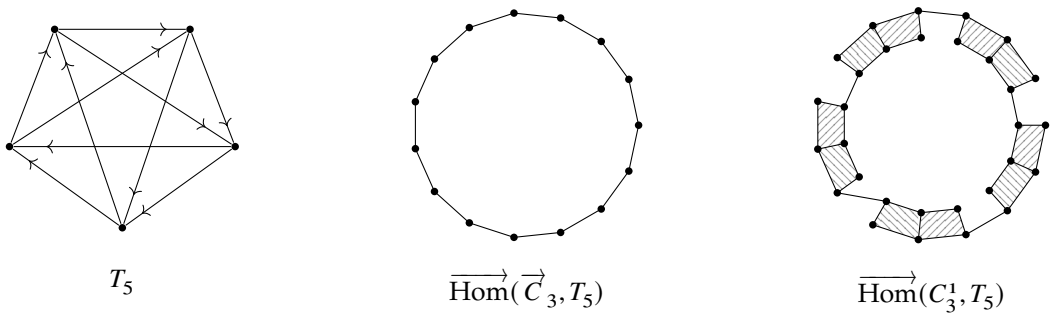
\begin{figure}[H]
\begin{subfigure}[]{0.32\textwidth}
    \centering
    \begin{tikzpicture}[scale=1.2]
\node[dot] (a) at (0,0) {};
\node[dot] (b) at (1.25,0.9) {};
\node[dot] (c) at (0.75,2.2) {};
\node[dot] (d) at (-0.75,2.2) {};
\node[dot] (e) at (-1.25,0.9) {};
 \draw[->-=.85] (b)--(a);
 \draw[->-=.85] (c)--(a);
 \draw[->-=.85] (a)--(d);
 \draw[->-=.85] (a)--(e);
 \draw[->-=.85] (c)--(b);
 \draw[->-=.85] (d)--(b);
 \draw[->-=.85] (b)--(e);
 \draw[->-=.85] (d)--(c);
 \draw[->-=.85] (e)--(c);
 \draw[->-=.85] (e)--(d);
\end{tikzpicture}\vspace{0.2cm}
    \caption*{$T_5$}\label{fig:T5}
\end{subfigure}
\begin{subfigure}[]{0.32\textwidth}
    \centering
    \begin{tikzpicture}[scale=1]
   \newdimen\R
    \R=1.5cm
   \draw (0:\R) node[dot] {}\foreach \x in {24,48,...,359} {
            -- (\x:\R) node[dot] {}
        } -- cycle (90:\R);
   
   \end{tikzpicture}\vspace{0.2cm}
    \caption*{$\Hom(\overrightarrow{C}_{3},T_5)$}\label{fig:c3t5}
\end{subfigure}
\begin{subfigure}[]{0.32\textwidth}
    \centering
    \begin{tikzpicture}[scale=0.9]
   \newdimen\R
    \R=1.3cm
    \newdimen\S
    \S=1.7cm
   \draw (0:\R) node[dot] {} -- (24:\R) node[dot] {} -- (48:\R) node[dot] {} -- (72:\R) node[dot] {} (96:\R) node[dot] {} -- (120:\R) node[dot] {} -- (144:\R) node[dot] {} -- (168:\R) node[dot] {} -- (192:\R) node[dot] {} -- (216:\R) node[dot] {} (240:\R) node[dot] {} -- (264:\R) node[dot] {} -- (288:\R) node[dot] {} (312:\R) node[dot] {} -- (336:\R) node[dot] {} -- (359:\R);
   
   \draw (240:\S) node[dot] {} -- (264:\S) node[dot] {} -- (288:\S) node[dot] {} -- (312:\S) node[dot] {} -- (336:\S) node[dot] {} -- (0:\S) node[dot] {} (24:\S) node[dot] {} -- (48:\S) node[dot] {} -- (72:\S) node[dot] {} -- (96:\S) node[dot] {} -- (120:\S) node[dot] {} -- (144:\S) node[dot] {} (168:\S) node[dot] {} -- (192:\S) node[dot] {} -- (216:\S) node[dot] {};
   
   \draw \foreach \x in {0,24,48,...,336} {
            (\x:\R)-- (\x:\S) } (216:\S) -- (240:\R);
   
   \pattern[pattern=north west lines,opacity=0.6] (24:\R)--(24:\S)--(48:\S)--(48:\R)--cycle;
   \pattern[pattern=north east lines,opacity=0.6] (48:\R)--(48:\S)--(72:\S)--(72:\R)--cycle;
   \pattern[pattern=north east lines,opacity=0.6] (96:\R)--(96:\S)--(120:\S)--(120:\R)--cycle;
   \pattern[pattern=north west lines,opacity=0.6] (120:\R)--(120:\S)--(144:\S)--(144:\R)--cycle;
   \pattern[pattern=north east lines,opacity=0.6] (168:\R)--(168:\S)--(192:\S)--(192:\R)--cycle;
   \pattern[pattern=north west lines,opacity=0.6] (192:\R)--(192:\S)--(216:\S)--(216:\R)--cycle;
   \pattern[pattern=north west lines,opacity=0.6] (240:\R)--(240:\S)--(264:\S)--(262:\R)--cycle;
   \pattern[pattern=north east lines,opacity=0.6] (264:\R)--(264:\S)--(288:\S)--(288:\R)--cycle;
   \pattern[pattern=north west lines,opacity=0.6] (312:\R)--(312:\S)--(336:\S)--(336:\R)--cycle;
   \pattern[pattern=north east lines,opacity=0.6] (336:\R)--(336:\S)--(0:\S)--(0:\R)--cycle;
   
   \end{tikzpicture}\vspace{0.2cm}
    \caption*{$\Hom(C_3^1,T_5)$}\label{fig:c4t5}
\end{subfigure}
\caption{The tournament $T_5$ and two of its $\Hom$ complexes.}
   \label{fig:example2}
\end{figure}    

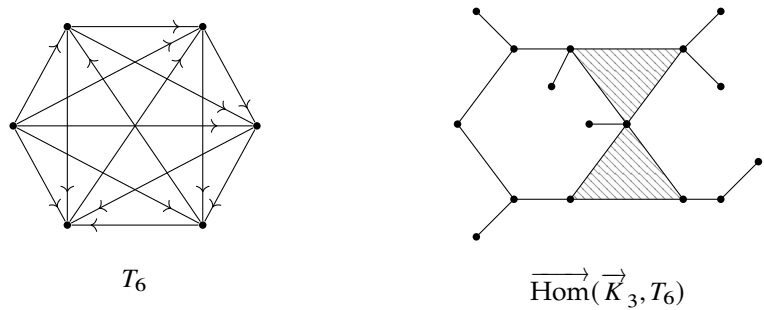
\begin{figure}[H]
\begin{subfigure}[]{0.4\textwidth}
    \centering
    \begin{tikzpicture}[scale=1.2]
\node[dot] (a) at (-0.75,0) {};
\node[dot] (b) at (0.75,0) {};
\node[dot] (c) at (1.35,1.1) {};
\node[dot] (d) at (0.75,2.2) {};
\node[dot] (e) at (-0.75,2.2) {};
\node[dot] (f) at (-1.35,1.1) {};
 \draw[->-=.85] (a)--(d);
 \draw[->-=.85] (b)--(a);
 \draw[->-=.85] (c)--(a);
 \draw[->-=.85] (e)--(a);
 \draw[->-=.85] (f)--(a);
 \draw[->-=.85] (b)--(e);
 \draw[->-=.85] (c)--(b);
 \draw[->-=.85] (d)--(b);
 \draw[->-=.85] (f)--(b);
 \draw[->-=.85] (d)--(c);
 \draw[->-=.85] (e)--(c);
 \draw[->-=.85] (f)--(c);
 \draw[->-=.85] (e)--(d);
 \draw[->-=.85] (f)--(d);
 \draw[->-=.85] (f)--(e);
\end{tikzpicture}\vspace{0.2cm}
    \caption*{$T_6$}\label{fig:T6}
\end{subfigure}
\begin{subfigure}[]{0.35\textwidth}
    \centering
    \begin{tikzpicture}[scale=1]
   \pattern[pattern=north west lines,opacity=0.6] (0,0) -- (1.5,0)  -- (0.75,1) --cycle;
   \pattern[pattern=north west lines,opacity=0.6] (0,2) -- (1.5,2)  -- (0.75,1) --cycle;
   
   \draw (0,0) node[dot] {}  -- (1.5,0) node[dot] {}  -- (0.75,1) node[dot] {}  --cycle;
   \draw (0,2) node[dot]{}-- (1.5,2) node[dot]{}  -- (0.75,1)node[dot]{} --cycle;
   \draw (2,2.5) node[dot]{}-- (1.5,2) -- (2,1.5) node[dot]{};
   \draw (0.25,1) node[dot]{}-- (0.75,1)  (-0.25,1.5) node[dot]{} -- (0,2)-- (-0.75,2) node[dot]{} -- (-1.5, 1) node[dot]{} -- (-0.75,0) node[dot]{}-- (0,0) (1.5,0)-- (2,0) node[dot]{}-- (2.5,0.5) node[dot]{};
   \draw (-0.75,2)-- (-1.25,2.5) node[dot]{} (-0.75,0)-- (-1.25,-0.5) node[dot]{};
   \end{tikzpicture}\vspace{0.2cm}
    \caption*{$\Hom(\overrightarrow{K}_{3},T_6)$}\label{fig:k3t6}
    \end{subfigure}
    \caption{The tournament $T_6$ and its complex of morphisms from the acyclic $3$-tournament.}\label{fig:example3}
    \end{figure}
    
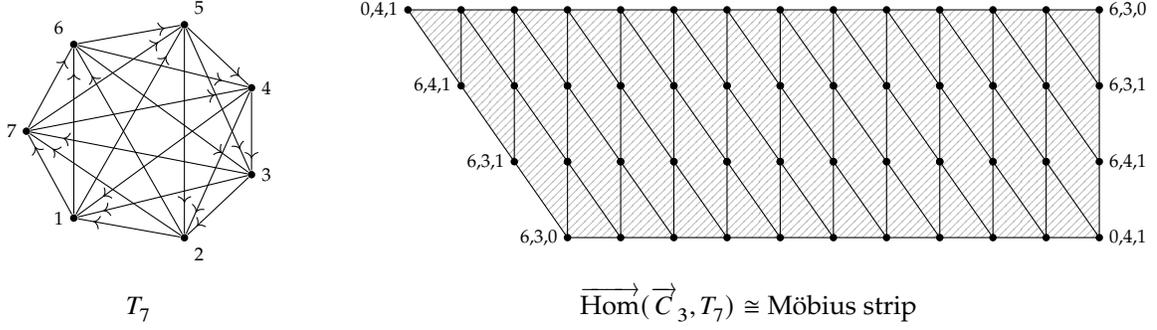
\begin{figure}[H]
\begin{subfigure}[]{0.33\textwidth}
    \centering
    \begin{tikzpicture}[scale=1.05]
\node[dot] (a) at (-0.75,0) {};
\node[dot] (b) at (0.65,-0.25) {};
\node[dot] (c) at (1.5,0.55) {};
\node[dot] (d) at (1.5,1.65) {};
\node[dot] (e) at (0.65,2.45) {};
\node[dot] (f) at (-0.75,2.2) {};
\node[dot] (g) at (-1.35,1.1) {};
 \draw[->-=.85] (a) node[anchor=east] {\tiny{1}} --(e);
 \draw[->-=.85] (a)--(f);
 \draw[->-=.85] (a)--(g);
 \draw[->-=.85] (b) node[anchor= north west] {\tiny{2}}--(a); 
 \draw[->-=.85] (c) node[anchor=west] {\tiny{3}} --(a);
 \draw[->-=.85] (d) node[anchor=west] {\tiny{4}}--(a);
 \draw[->-=.85] (c)--(b);
 \draw[->-=.85] (d)--(b);
 \draw[->-=.85] (e) node[anchor=south west] {\tiny{5}} --(b);
 \draw[->-=.85] (b)--(f);
 \draw[->-=.85] (b)--(g);
 \draw[->-=.85] (d)--(c);
 \draw[->-=.85] (e)--(c);
 \draw[->-=.85] (f) node[anchor=south east] {\tiny{6}} --(c);
 \draw[->-=.85] (c)--(g) node[anchor=east] {\tiny{7}} ;
 \draw[->-=.85] (e)--(d);
 \draw[->-=.85] (f)--(d);
 \draw[->-=.85] (g)--(d);
 \draw[->-=.85] (f)--(e);
 \draw[->-=.85] (g)--(e);
 \draw[->-=.85] (g)--(f);
\end{tikzpicture}\vspace{0.05cm}
    \caption*{$T_7$}\label{fig:t71}
\end{subfigure}
\begin{subfigure}[]{0.64\textwidth}
    \centering
    \begin{tikzpicture}[scale=1.01]
    
    \pattern[pattern=north east lines,opacity=0.5] (-0.7,2) -- (1.4,-1)  -- (8.4,-1)--(8.4,2) --cycle;
    
   \draw (-0.7,2) node[dot] {} node[anchor=east] {\tiny{1,5,2}} \foreach \x in {0,0.7,...,7.7} {
            -- (\x,2) node[dot] {}   } -- (8.4,2) node[dot] {} node[anchor=west] {\tiny{7,4,1}};
   \draw (1.4,-1) node[dot] {} node[anchor=east] {\tiny{7,4,1}} \foreach \x in {2.1,2.8,...,7.7} {
            -- (\x,-1) node[dot] {}   }-- (8.4,-1) node[dot] {} node[anchor=west] {\tiny{1,5,2}};
            
    \draw \foreach \x in {1.4,2.1,2.8,...,8.4} {  (\x,-1)  -- (\x,0) node[dot] {} -- (\x,1) node[dot] {}--(\x,2)
       (\x,-1)--(\x-0.7,0) node[dot] {} --(\x-1.4,1) node[dot] {} --(\x-2.1,2) };
    \draw (0,1) node[anchor=east] {\tiny{7,5,2}} --(0,2) (0.7,0) node[anchor=east] {\tiny{7,4,2}}--(0.7,1)--(0.7,2) (8.4,0) node[anchor=west] {\tiny{7,5,2}} --(7.7,1)--(7.0,2) (8.4,1) node[anchor=west] {\tiny{7,4,2}}--(7.7,2);
   \end{tikzpicture}\vspace{0.2cm}
    \caption*{$\Hom(\overrightarrow{C}_{3},T_7)\cong$ M{\"o}bius strip}\label{fig:c3t71}
\end{subfigure}
   \caption{The tournament $T_7$ and its complex of morphisms from the $3$-cycle $\protect\overrightarrow{C}_3$.}\label{fig:example4}
\end{figure}

\begin{figure}[H]
\begin{subfigure}[]{0.35\textwidth}
    \centering
    \begin{tikzpicture}[scale=1.2]
\node[dot] (a) at (-0.75,0) {};
\node[dot] (b) at (0.75,0) {};
\node[dot] (c) at (1.5,1.1) {};
\node[dot] (d) at (0.75,2.2) {};
\node[dot] (e) at (-0.75,2.2) {};
\node[dot] (f) at (-1.5,1.1) {};
 \draw[->-=.85] (b)node[anchor=west] {\tiny{2}} --(a) node[anchor=east] {\tiny{1}};
 \draw[->-=.85] (e) node[anchor=east] {\tiny{4}} --(a);
 \draw[->-=.85] (a) -- (f) node[anchor=east] {\tiny{3}} ;
 \draw[->-=.85] (a)--(c) node[anchor=west] {\tiny{5}};
 \draw[->-=.85] (c)--(b);
 \draw[->-=.85] (b)--(d) node[anchor=west] {\tiny{6}};
 \draw[->-=.85] (f)--(b);
 \draw[->-=.85] (d)--(c);
 \draw[->-=.85] (c)--(e);
 \draw[->-=.85] (e)--(d);
 \draw[->-=.85] (d)--(f);
 \draw[->-=.85] (f)--(e);
\end{tikzpicture}\vspace{0.2cm}
    \caption*{$O_6$}\label{fig:o6}
\end{subfigure}
\begin{subfigure}[]{0.6\textwidth}
    \centering
    \begin{tikzpicture}[scale=0.8]
    \pattern[pattern=north east lines,opacity=0.5] (0,0) -- (2,-0.3) -- (2,1.7) -- (0,2) -- cycle;
    \pattern[pattern=north east lines,opacity=0.5] (0,2) -- (2,1.7) -- (2.5,2.2) -- (0.5,2.5) -- cycle;
    \pattern[pattern=north east lines,opacity=0.5] (2,-0.3) -- (2,1.7) -- (2.5,2.2) -- (2.5,0.2) -- cycle;
    
    \draw (0,0) node[dot]{} node[anchor=east] {\tiny{1,3,2}} -- (0,2) node[dot]{}node[anchor=east] {\tiny{1,5,2}}  -- (0.5,2.5)node[dot]{} node[anchor=south] {\tiny{1,5,4}}  -- (0.5,0.5)node[dot]{} node[anchor=north west] {\tiny{1,3,4}}  -- cycle;
    \draw (0,0) -- (2,-0.3) node[dot]{} node[anchor=north] {\tiny{6,3,2}} -- (2,1.7)node[dot]{} node[anchor=north east] {\tiny{6,5,2}} -- (0,2) -- cycle;
    \draw (0,0) -- (2,-0.3) -- (2.5,0.2) node[dot]{} node[anchor=west] {\tiny{6,3,4}} -- (0.5,0.5) -- cycle;[fill=green,fill opacity=0.5] (0.5,0.5) -- (0.5,2.5) -- (2.5,2.2)  -- (2.5,0.2) -- cycle;
    \draw (2,-0.3) -- (2,1.7) -- (2.5,2.2) -- (2.5,0.2) -- cycle;
    \draw (0,2) -- (2,1.7) -- (2.5,2.2)node[dot]{} node[anchor= south west] {\tiny{6,5,4}} -- (0.5,2.5) -- cycle;

    \pattern[pattern=north west lines,opacity=0.5] (4,0) -- (6,-0.3) -- (6,1.7) -- (4,2) -- cycle;
    \pattern[pattern=north west lines,opacity=0.5] (4,2) -- (6,1.7) -- (6.5,2.2) -- (4.5,2.5) -- cycle;
    \pattern[pattern=north west lines,opacity=0.5] (6,-0.3) -- (6,1.7) -- (6.5,2.2) -- (6.5,0.2) -- cycle;
    
    \draw (4,0) node[dot]{} node[anchor=north] {\tiny{2,1,3}} -- (4,2) node[dot]{}node[anchor=east] {\tiny{2,6,3}}  -- (4.5,2.5)node[dot]{} node[anchor=south] {\tiny{2,6,5}}  -- (4.5,0.5)node[dot]{} node[anchor=north west] {\tiny{2,1,5}}  -- cycle;
    \draw (4,0) -- (6,-0.3) node[dot]{} node[anchor=north] {\tiny{4,1,3}} -- (6,1.7)node[dot]{} node[anchor=north east] {\tiny{4,6,3}} -- (4,2) -- cycle;
    \draw (4,0) -- (6,-0.3) -- (6.5,0.2) node[dot]{} node[anchor=west] {\tiny{4,1,5}} -- (4.5,0.5) -- cycle;[fill=green,fill opacity=0.5] (4.5,0.5) -- (4.5,2.5) -- (6.5,2.2)  -- (6.5,0.2) -- cycle;
    \draw (6,-0.3) -- (6,1.7) -- (6.5,2.2) -- (6.5,0.2) -- cycle;
    \draw (4,2) -- (6,1.7) -- (6.5,2.2)node[dot]{} node[anchor= south west] {\tiny{4,6,5}} -- (4.5,2.5) -- cycle;

    \pattern[pattern=north east lines,opacity=0.5] (8,0) -- (10,-0.3) -- (10,1.7) -- (8,2) -- cycle;
    \pattern[pattern=north east lines,opacity=0.5] (8,2) -- (10,1.7) -- (10.5,2.2) -- (8.5,2.5) -- cycle;
    \pattern[pattern=north east lines,opacity=0.5] (10,-0.3) -- (10,1.7) -- (10.5,2.2) -- (10.5,0.2) -- cycle;
    
    \draw (8,0) node[dot]{} node[anchor=north] {\tiny{3,2,1}} -- (8,2) node[dot]{}node[anchor=east] {\tiny{3,4,1}}  -- (8.5,2.5)node[dot]{} node[anchor=south] {\tiny{3,4,6}}  -- (8.5,0.5)node[dot]{} node[anchor=north west] {\tiny{3,2,6}}  -- cycle;
    \draw (8,0) -- (10,-0.3) node[dot]{} node[anchor=north] {\tiny{5,2,1}} -- (10,1.7)node[dot]{} node[anchor=north east] {\tiny{5,4,1}} -- (8,2) -- cycle;
    \draw (8,0) -- (10,-0.3) -- (10.5,0.2) node[dot]{} node[anchor=west] {\tiny{5,2,6}} -- (8.5,0.5) -- cycle;[fill=green,fill opacity=0.5] (8.5,0.5) -- (8.5,2.5) -- (10.5,2.2)  -- (10.5,0.2) -- cycle;
    \draw (10,-0.3) -- (10,1.7) -- (10.5,2.2) -- (10.5,0.2) -- cycle;
    \draw (8,2) -- (10,1.7) -- (10.5,2.2)node[dot]{} node[anchor= south west] {\tiny{5,4,6}} -- (8.5,2.5) -- cycle;
   \end{tikzpicture}\vspace{0.2cm}
    \caption*{$\Hom(\overrightarrow{C}_{3},O_6)$}\label{fig:k3o6}
    \end{subfigure}
    \caption{The octahedral graph \texorpdfstring{$O_6$}{oct} and its complex of morphisms from the $3$-cycle $\protect\overrightarrow{C}_3$.}
    \label{fig:example5}
    \end{figure}

\section{Structural properties of \texorpdfstring{$\Hom$}{dh} complexes} \label{sec:structural}

In this section we collect some structural results regarding directed homomorphism complexes, many of which extend analogous properties for Hom complexes of undirected graphs.  We first point out some immediate properties of the $\Hom$ complexes, first established in the context of undirected graphs by Babson and Kozlov in \cite{BabKozCom}.

\begin{itemize}
    \item For any two digraphs $G$ and $H$, we have that $\Hom(G,H)$ is a polyhedral complex (and hence a regular CW complex);
    \smallskip
    
    \item The cells of $\Hom(G,H)$ are products of simplices;
        \smallskip
    \item
    The map $\Hom(G,-)$ is a covariant, while $\Hom(-,H)$ is a contravariant, functor from the category of directed graphs to the category of topological spaces.  If $f \in \Hom_0(A,B)$ is a graph homomorphism, we let 
    \[f_*: \Hom(G,A) \rightarrow \Hom(G,B), \hspace{.2 in} f^*:\Hom(B,H) \rightarrow \Hom(A,H)\]
    
    \noindent
    denote the induced cellular (poset) maps;
        \smallskip
    \item
In particular, if a group $\Gamma$ acts on a digraph $G$ then for any digraph $H$, $\Gamma$ acts on the complex $\Hom(G,H)$;

        \smallskip
    \item
    The map induced by composition
    \[\Hom(G,H) \times \Hom(H,K) \rightarrow \Hom(G,K)\]
    is a topological (poset) map.
    \end{itemize}

Recall that for digraphs $G$ and $H$ the \defi{coproduct} $G \sqcup H$ is the digraph whose vertex set and edge set are given by disjoint unions.  The homomorphism complex interacts with this operation in a predictable way.

\begin{prop}\label{prop:coprod}
For any digraphs $A$, $B$, $C$ we have an isomorphism of posets
\[ \Hom(A \sqcup B, C) \cong \Hom(A, C) \times \Hom(B,C).\]
If $A$ is connected and has at least one edge, we have an isomorphism of posets
\[ \Hom(A , B \sqcup C) \cong \Hom(A, B) \sqcup \Hom(A,C).\]
\end{prop}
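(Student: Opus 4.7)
Both isomorphisms are most naturally described at the level of the multihomomorphism posets $P(-,-)$, whose order complexes recover the $\Hom$ complexes up to barycentric subdivision; so it suffices to exhibit poset isomorphisms in each case.

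For part (1), the plan is to exploit the disjointness $V(A \sqcup B) = V(A) \sqcup V(B)$ and $E(A \sqcup B) = E(A) \sqcup E(B)$, with no edges crossing between the two summands. A vertex-set map $\alpha: V(A\sqcup B) \to 2^{V(C)} \setminus \{\emptyset\}$ thus decomposes canonically as a pair $(\alpha|_A, \alpha|_B)$, and the multihomomorphism condition $\alpha(x) \times \alpha(y) \subset E(C)$ for each $(x,y) \in E(A\sqcup B)$ splits into the analogous conditions for $\alpha|_A$ on edges of $A$ and $\alpha|_B$ on edges of $B$. Since the poset relation $\alpha \leq \beta$ is defined vertex-by-vertex, it likewise decomposes as the product order, so $\alpha \mapsto (\alpha|_A, \alpha|_B)$ is the desired isomorphism $P(A \sqcup B, C) \cong P(A,C) \times P(B, C)$.

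For part (2), the key observation is that $E(B \sqcup C) = E(B) \sqcup E(C)$ has no edges joining $V(B)$ and $V(C)$. Given a multihomomorphism $\alpha: A \to B\sqcup C$ and an edge $(x,y) \in E(A)$, I would first show that $\alpha(x) \cup \alpha(y)$ lies entirely in $V(B)$ or entirely in $V(C)$: pick $u_0 \in \alpha(x)$, $v_0 \in \alpha(y)$ and assume $(u_0, v_0) \in E(B)$; then for any $u \in \alpha(x)$ the pair $(u, v_0) \in E(B \sqcup C)$ must sit in $E(B)$ since $v_0 \in V(B)$, forcing $u \in V(B)$, and symmetrically $\alpha(y) \subset V(B)$. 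Next I would propagate this component assignment across all of $A$: because $A$ is connected and has at least one edge, every vertex is incident to an edge, and any two vertices are joined by an undirected walk in $A$; applying the previous step along consecutive edges of such a walk shows that a single component (either $B$ or $C$) contains every $\alpha(v)$, so $\alpha$ factors through $B$ or through $C$. This yields a set-theoretic partition $P(A, B \sqcup C) = P(A, B) \sqcup P(A, C)$, and this partition is compatible with the order: if $\alpha \leq \beta$ and $\alpha$ factors through $B$, then $\beta(v) \supset \alpha(v)$ meets $V(B)$ for every $v$, so the component containing all values of $\beta$ must also be $B$.

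The main obstacle is the propagation step of part (2), where both hypotheses on $A$ are essential: without connectedness one could route different connected pieces of $A$ independently into $B$ or $C$, and without an edge the multihomomorphism condition is vacuous (indeed $P(\{v\}, B \sqcup C)$ is certainly not $P(\{v\}, B) \sqcup P(\{v\}, C)$, since a singleton multihomomorphism may take as its value any nonempty subset of $V(B) \sqcup V(C)$, including subsets meeting both sides). Once this step is in place the rest is a direct verification at the level of posets.
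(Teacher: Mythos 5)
Your proof is correct and follows essentially the same approach as the paper's: part (1) decomposes a multihomomorphism by restriction to $A$ and $B$, and part (2) fixes an edge of $A$ to pin down a single component of $B \sqcup C$ and then propagates this along walks using connectedness. You fill in somewhat more detail than the paper — in particular the explicit check that $\alpha \leq \beta$ forces $\alpha$ and $\beta$ to land in the same summand, which is needed for the poset (not just set) isomorphism — but the core argument is identical.
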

\begin{proof}
The first formula follows from the fact that any multihomomorphism $\alpha: A \sqcup B \rightarrow C$ is determined by its restriction to $A$ and $B$.  For the second formula, suppose that $\beta: V(A) \rightarrow 2^{V(B) \sqcup V(C)} \backslash \{\emptyset\}$ and $x \sim y$ in $A$.  Then if $\beta(x) \cap V(B) \neq \emptyset$ then we must have $\beta(y) \subset V(B)$. Since $A$ is connected, it follows that $\cup_{x \in V(A)} \beta(x) \subset V(B)$.
\end{proof}

As is the case with Hom complexes of undirected graphs the $\Hom$ construction also interacts well with products in the second coordinate.

\begin{prop}\label{prop:product}
Suppose $T$, $G$, and $H$ are digraphs.  Then we have a natural strong homotopy equivalence of posets
\[ \Hom(T, G \times H) \rightarrow \Hom(T, G) \times \Hom(T,H).\]
\end{prop}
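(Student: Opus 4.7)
The natural candidate for the map is the one induced by the projections. Specifically, let $\pi_1: G \times H \to G$ and $\pi_2: G \times H \to H$ be the natural projection homomorphisms of digraphs, and define
\[\Phi: \Hom(T, G\times H) \longrightarrow \Hom(T,G) \times \Hom(T,H)\]
by $\Phi(\alpha) = ((\pi_1)_*\alpha, (\pi_2)_*\alpha)$, i.e., by sending a multihomomorphism $\alpha:V(T) \to 2^{V(G) \times V(H)} \setminus \{\emptyset\}$ to the pair $(\beta,\gamma)$ with $\beta(v) = \pi_1(\alpha(v))$ and $\gamma(v) = \pi_2(\alpha(v))$. I would first verify quickly that $\Phi$ is a poset map, which is immediate from the fact that $\pi_i$ pushes inclusions to inclusions.

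To prove this is a strong homotopy equivalence, my plan is to produce an explicit right inverse and show that the resulting composition on $\Hom(T,G\times H)$ is a closure map, so that Bj\"orner's closure-map criterion cited in Section \ref{sec:collapses} applies. Concretely, define
\[\Psi:\Hom(T,G)\times \Hom(T,H) \longrightarrow \Hom(T,G\times H), \qquad \Psi(\beta,\gamma)(v) = \beta(v)\times \gamma(v).\]
The key check here is that $\Psi(\beta,\gamma)$ really is a multihomomorphism into $G\times H$: for any edge $(v,w)\in E(T)$ and any $(g,h)\in \beta(v)\times\gamma(v)$, $(g',h')\in \beta(w)\times\gamma(w)$, we have $(g,g')\in E(G)$ and $(h,h')\in E(H)$ since $\beta,\gamma$ are multihomomorphisms, and hence $((g,h),(g',h'))\in E(G\times H)$ by definition of the categorical product. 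Monotonicity of $\Psi$ is immediate.

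Next I would verify the two composition identities. On the one hand, $\Phi\circ \Psi=\mathrm{id}$: since $\beta(v)$ and $\gamma(v)$ are nonempty, $\pi_1(\beta(v)\times\gamma(v))=\beta(v)$ and $\pi_2(\beta(v)\times\gamma(v))=\gamma(v)$. On the other hand, the composition $c := \Psi\circ \Phi$ sends $\alpha$ to the multihomomorphism $v\mapsto \pi_1(\alpha(v))\times \pi_2(\alpha(v))$, which contains $\alpha(v)$ because any pair $(g,h)\in \alpha(v)$ has $g \in \pi_1(\alpha(v))$ and $h\in \pi_2(\alpha(v))$. Hence $c(\alpha)\geq \alpha$ in the poset $\Hom(T,G\times H)$, and $c\circ c = \Psi\circ(\Phi\circ \Psi)\circ \Phi = \Psi\circ \Phi = c$, so $c$ is a closure map.

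The closure map result from Section \ref{sec:collapses} then yields a strong deformation retract of $\Hom(T,G\times H)$ onto $c(\Hom(T,G\times H)) = \Psi(\Hom(T,G)\times\Hom(T,H))$, and $\Psi$ is an isomorphism of posets onto this image (injectivity follows from recovering $\beta(v)$ and $\gamma(v)$ as the projections of the nonempty product $\beta(v)\times\gamma(v)$). Composing, $\Phi$ is realized as a strong homotopy equivalence with homotopy inverse $\Psi$, and naturality in $T$, $G$, $H$ is evident from the construction. The only mildly subtle point in the argument is observing that $\Phi$ is generally not an isomorphism (a multihomomorphism into $G\times H$ is a set of pairs, not a pair of sets), so that the closure-map step is genuinely needed rather than a direct poset isomorphism.
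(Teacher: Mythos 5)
Your argument is correct and is essentially the paper's own proof: your $\Phi$ and $\Psi$ coincide with the maps $c$ and $i$ in the paper (the minimal sets $A_v\subseteq V(G)$, $B_v\subseteq V(H)$ with $\gamma(v)\subseteq A_v\times B_v$ are exactly the projections $\pi_1(\gamma(v))$, $\pi_2(\gamma(v))$), and the paper likewise concludes from $c\circ i=\mathrm{id}$ and $i\circ c\geq\mathrm{id}$, which is precisely the closure-map step you spell out. You have actually filled in slightly more detail than the paper does (idempotence of $\Psi\circ\Phi$, the strong deformation retract onto the image, and the observation that $\Phi$ is not itself a poset isomorphism), all of which is consistent with the intended argument.
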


\begin{proof}
The proof is more or less identical to the proof of Proposition 3.8 from \cite{DocHom}, we sketch the argument here. Let $P = \Hom(T, G \times H)$ and $Q = \Hom(T,G) \times \Hom(T,H)$ denote the respective posets. 
First define a poset map $i:Q \rightarrow P$ by $i(\alpha, \beta)(v) = \alpha(v) \times \beta(v)$, for any $(\alpha, \beta) \in Q$ and any vertex $v \in V(T)$.  It is clear that $i$ is injective.

Next define a poset map $c:P \rightarrow Q$ as follows. If $\gamma \in P$ and $v \in V(T)$, we have minimal vertex subsets $A_v \subseteq V(G)$, $B_v \subseteq V(H)$ such that $\gamma(v) \subseteq \{(a, b): a \in A_v, b \in B_v\}$.  Define $c(\gamma) = \alpha \times \beta$, where $\alpha(v) = A_v$ and $\beta(v) = B_v$ are these subsets.

One can check that these are well-defined poset maps and also that $c \circ i = \id_Q$ and $i \circ c \geq \id_P$, from which the result follows.
\end{proof}

We will see that $\Hom$ complexes also behave well with respect to the internal hom adjunction described in Section \ref{sec:definitions}, as well as a notion of `folding' for digraphs.  Again this mimics the situation for undirected graphs and, as pointed out by Matsushita in \cite{MatMor}, applies in more general contexts as well.  We will import some results from \cite{MatMor} that can applied to digraphs.  For this we first review some relevant concepts.

For a fixed positive integer $r$, an \defi{$r$-set} is a pair $X = (V(X), R(X))$ consisting of a set $V(X)$ and a subset $R(X)$ of $V(X)^r$, the $r$-times direct product of $V(X)$.  Note that a directed graph is simply an $r$-set with $r=2$, whereas an $r$-uniform hypergraph is an $r$-set whose relation set $R(X)$ is closed under the $S_r$-action on $V(X)^r$.

In \cite{MatMor} Matsushita develops a theory of homomorphism complexes of $r$-sets that specializes to the $\Hom$ complexes of digraphs discussed here.  He establishes several properties of such complexes, in many cases extending the proofs given for homomoporhism complexes of undirected graphs given by the first author in \cite{DocHom}.  In particular, we have the following.

\begin{prop}\label{prop:adjoint} \cite[Lemma 4.4]{MatMor}
Suppose $A$, $B$, $C$ are digraphs.  Then we have a natural strong homotopy equivalence of posets
\[ \Hom(A \times B, C) \rightarrow \Hom(A, C^B).\]
\end{prop}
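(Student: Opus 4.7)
The plan is to give an explicit pair of poset maps between the two sides and to realize their composition as a closure map, invoking the closure-map / strong-deformation-retract mechanism from Section \ref{sec:collapses}. On vertices, this is forced to be the classical adjunction $\varphi: \Hom_0(A \times B, C) \xrightarrow{\cong} \Hom_0(A, C^B)$ from Section \ref{sec:graphs}, and the task is to extend $\varphi$ to multihomomorphisms in a way that respects the poset structures.

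First I would define the forward map $\Phi : \Hom(A \times B, C) \to \Hom(A, C^B)$ on a multihomomorphism $\alpha$ by the natural ``box'' formula
\[
\Phi(\alpha)(a) \;=\; \bigl\{\, f \in V(C^B) \,:\, f(b) \in \alpha(a,b) \text{ for every } b \in V(B)\,\bigr\}.
\]
Nonemptiness of $\Phi(\alpha)(a)$ is immediate since each $\alpha(a,b)$ is nonempty, and the required edge condition $(f,g) \in E(C^B)$ for $f \in \Phi(\alpha)(a)$, $g \in \Phi(\alpha)(a')$ with $(a,a') \in E(A)$ unpacks, via the definition of $C^B$, exactly to the multihom condition for $\alpha$ on the edges $((a,b),(a',b')) \in E(A \times B)$. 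In the reverse direction I would define $\Psi : \Hom(A, C^B) \to \Hom(A \times B, C)$ by evaluation,
\[
\Psi(\beta)(a,b) \;=\; \{\, f(b) \,:\, f \in \beta(a)\,\},
\]
whose multihom property follows from a symmetric unpacking of the edge condition in $C^B$. Both maps are manifestly order-preserving.

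The key calculation is then that $\Psi \circ \Phi = \id$ on $\Hom(A \times B, C)$ (given $x \in \alpha(a,b)$, extend $x$ to some $f \in \Phi(\alpha)(a)$ by making arbitrary choices in $\alpha(a,b')$ for $b' \neq b$, so $x$ is recovered by $\Psi \circ \Phi(\alpha)$), while $\Phi \circ \Psi \geq \id$ on $\Hom(A, C^B)$ (taking all vertex mappings whose $b$-th coordinate lies in $\{g(b) : g \in \beta(a)\}$ can only enlarge $\beta(a)$). Together these imply that $c := \Phi \circ \Psi$ is a closure map on $\Hom(A, C^B)$ whose image coincides with $\Phi(\Hom(A \times B, C))$, so this image is a strong deformation retract, while $\Phi$ itself is a poset isomorphism onto that image (with inverse the restriction of $\Psi$). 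Composing yields the claimed strong homotopy equivalence. The main obstacle is purely bookkeeping: one has to keep $a,a',b,b',f,g,f(b),g(b')$ straight long enough to confirm that the $A \times B \to C$ and $A \to C^B$ multihom conditions match up perfectly under $\Phi$ and $\Psi$. Once that is done, the statement is in any case a direct specialization to $r=2$ of Matsushita's general adjunction for $r$-sets \cite[Lemma 4.4]{MatMor}, so an alternative would be simply to cite that result.
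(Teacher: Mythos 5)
Your direct proof is correct, and it is a genuinely different route from the paper's, which simply defers to \cite[Lemma~4.4]{MatMor} without reproducing an argument. Your maps $\Phi$ and $\Psi$ are well-defined poset maps, the computations $\Psi\circ\Phi=\id$ and $\Phi\circ\Psi\geq\id$ go through exactly as you sketch, and together with the already-verified $\Psi\circ\Phi=\id$ the composite $c=\Phi\circ\Psi$ is idempotent, hence a closure map with image $\Phi(\Hom(A\times B,C))$; the strong-deformation-retract mechanism from Section~\ref{sec:collapses} then finishes it. This is in fact the same shape of argument the paper \emph{does} spell out for Proposition~\ref{prop:product}, so your proof has the virtue of keeping the two product/adjunction statements self-contained and stylistically parallel, rather than sending the reader to the $r$-set literature for one and proving the other in-line. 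What the citation buys in exchange is brevity and the reassurance that the adjunction holds for arbitrary $r$-sets, which the paper later leans on when invoking Matsushita's framework elsewhere (e.g.\ for foldings and strong homotopy).

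One small point worth making explicit if you write this up: the claim that $\Phi$ is a poset isomorphism onto the image of $c$ deserves one line of justification. Injectivity of $\Phi$ follows from $\Psi\circ\Phi=\id$, surjectivity onto $\im(c)$ follows from $\Phi(\alpha)=\Phi(\Psi(\Phi(\alpha)))=c(\Phi(\alpha))$, and order-reflection (so that the inverse bijection is also order-preserving) follows because the inverse is the restriction of the order-preserving map $\Psi$. With that stated, the argument is airtight.
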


Let ${\bf 1}$ denote the graph with a single vertex $\{v\}$ and a single edge (loop) $(v,v)$. Note that for any digraph $G$ we have $G = G \times {\bf 1} \cong G \cong {\bf 1} \times G$, so that from Proposition \ref{prop:adjoint} we get a strong homotopy equivalence
\[ \Hom(G,H) \cong \Hom({\bf 1} \times G, H) \simeq \Hom({\bf 1}, H^G).\]

\begin{rem}\label{rem:clique}
The complex $\Hom({\bf 1}, H^G)$ can alternatively be described as follows.  Form an undirected graph ${\mathcal G}(H^G)$ on the looped vertices of $H^G$ with adjacency $f \sim g$ if both directed edges $(f,g)$ and $(g,f)$ exist in the digraph $H^G$.  Then $\Hom({\bf 1}, H^G)$ coincides with $X({\mathcal G}(H^G))$, the clique complex of the graph ${\mathcal G}(H^G)$.  Hence we have $\Hom(G,H) \simeq X({\mathcal G}(H^G))$.
\end{rem}

\begin{rem}
One can use Proposition \ref{prop:adjoint} and Remark \ref{rem:clique} to prove that the functor $\Hom(T,-)$ preserves finite limits, in the sense that if $D$ is any finite diagram of digraphs with limit $\lim D$ we have a homotopy equivalence
\begin{displaymath}
\big|\Hom\big(T,\lim (D)\big)\big| \simeq \big|\lim \big(\Hom(T,
D)\big)\big|.
\end{displaymath}

This follows from the fact that taking a clique complex of a graph preserves limits, and that one can view ${\mathcal G}(-)$ as a functor to the category of reflexive undirected graphs that is right adjoint to inclusion.  Details can be found in Proposition 3.7 of \cite{DocHom}.  
\end{rem}

In the case of undirected graphs, the notion of a \emph{folding} proved useful in analyzing the homotopy type of $\oldHom$ complexes, for instance in showing that $\oldHom(T,G) \simeq \oldHom(K_2, G)$ for any connected tree $T$ with at least one edge \cite{BabKozCom}.  In \cite{KozSim} Kozlov provided a simple proof that foldings preserve homotopy type in both entries of the $\oldHom$ complex of undirected graphs. As was established by Matsushita a similar result holds for $r$-sets and hence can be applied to $\Hom$ complexes of digraphs.  Let us first set some notation.

\begin{defn}\label{def:folding}
Suppose $G$ is a digraph  with vertices $v$ and $w$ such that $\inN_G(v) \subset \inN_G(w)$ and $\outN_G(v) \subset \outN_G(w)$.  Then the vertex $v$ is said to be \defi{dismantlable}, and the mapping $v \mapsto w$ defines a graph homomorphism $G \rightarrow G \backslash \{v\}$ called a \defi{directed folding}.  A digraph $G$ is \defi{dismantlable} if there exists a sequence of directed foldings that results in the single looped vertex ${\bf 1}$.
\end{defn}

Results from \cite{MatMor} can then be applied to obtain the following.

\begin{prop} \label{prop:folding} \cite[Theorem 5.6]{MatMor}
Let $G$ be a digraph and suppose $f: G \rightarrow G \backslash \{v\}$ is a directed folding. Then for any digraph $H$ we have natural strong homotopy equivalences of posets
\[f_*: \Hom(H, G) \rightarrow \Hom(H,G\backslash \{v\}),\]
\[f^*:\Hom(G \backslash \{v\}, H) \rightarrow \Hom(G, H).\]
\end{prop}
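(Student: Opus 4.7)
The stated result is attributed to Matsushita's general $r$-set theorem, so one option is simply to quote it. For the purposes of a self-contained plan, however, I would give a direct proof specialized to digraphs, as this makes transparent where the neighborhood containment defining a directed folding is used. The approach for both maps is the same two-step pattern: exhibit an explicit inverse-type poset map, and then interpolate the composite back to the identity by a single zig-zag in the poset $\Poset(\Hom,\Hom)$, which by definition gives a strong homotopy.

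For the first equivalence $f_*:\Hom(H,G)\to\Hom(H,G\setminus\{v\})$, the plan is to identify the target with the subposet $P\subset\Hom(H,G)$ of multihomomorphisms whose image avoids $v$, and to let $\iota:P\hookrightarrow\Hom(H,G)$ be the inclusion. Then $f_*\circ\iota=\id_P$ automatically, and the only real content is showing $\iota\circ f_*\simeq\id_{\Hom(H,G)}$. To do this I would introduce the auxiliary map
\[
c(\alpha)(x)=\alpha(x)\cup\{w:v\in\alpha(x)\}.
\]
The folding hypotheses $\outN_G(v)\subseteq\outN_G(w)$ and $\inN_G(v)\subseteq\inN_G(w)$ are precisely what is needed to verify that $c(\alpha)$ is still a multihomomorphism, and this is the first obstacle to nail down carefully. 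Once this is done, $c$ is order preserving and satisfies $c\geq\id$; on the other hand $\iota\circ f_*(\alpha)(x)=(\alpha(x)\setminus\{v\})\cup\{w:v\in\alpha(x)\}\subseteq c(\alpha)(x)$, so $c\geq\iota\circ f_*$. The chain $\id\leq c\geq\iota\circ f_*$ is a path of length two in $\Poset(\Hom(H,G),\Hom(H,G))$ and yields the strong homotopy.

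For the second equivalence $f^*:\Hom(G\setminus\{v\},H)\to\Hom(G,H)$, the natural inverse-type map is the restriction $g(\beta)(x)=\beta(x)$ for $x\neq v$; since any edge of $G\setminus\{v\}$ is an edge of $G$, this lands in $\Hom(G\setminus\{v\},H)$, and $g\circ f^*=\id$ is immediate. To show $f^*\circ g\simeq \id_{\Hom(G,H)}$ I would use the enlargement
\[
h(\beta)(v)=\beta(v)\cup\beta(w),\qquad h(\beta)(x)=\beta(x)\text{ for }x\neq v.
\]
The multihomomorphism check again uses the neighborhood containment: every out-neighbor $y$ of $v$ is also an out-neighbor of $w$, so $\beta(w)\times\beta(y)\subseteq E(H)$ alongside $\beta(v)\times\beta(y)\subseteq E(H)$, and similarly for in-neighbors. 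Then $h\geq\id$ and $h\geq f^*\circ g$ by construction, producing the required zig-zag.

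The main technical obstacle throughout is verifying that the constructions $c$ and $h$ preserve the multihomomorphism condition; both amount to the same observation, namely that wherever $v$ appears one may freely add $w$ (and vice versa when pulling back along $f$), precisely because $v$'s directed neighborhoods are contained in $w$'s. After that, the remaining work is bookkeeping: checking order preservation and assembling the zig-zags in $\Poset(-,-)$ so as to invoke the strong homotopy framework from Section~\ref{sec:collapses}.
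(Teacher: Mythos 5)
The paper does not prove this proposition; it cites Matsushita's Theorem 5.6 on $r$-sets, which establishes a more general result in the categorical setting of sets-with-relations. Your proposal instead gives a direct, self-contained proof specialized to digraphs, and it is correct: it is precisely the expected adaptation of Kozlov's simple fold argument for undirected graphs. Both halves follow the right pattern: identify $\Hom(H,G\backslash\{v\})$ (resp.\ $\Hom(G\backslash\{v\},H)$) via $r_*$ (resp.\ $r^*$) for the inclusion $r: G\backslash\{v\}\hookrightarrow G$, observe that $f\circ r=\id$ gives one composite equal to the identity on the nose, and then interpolate the other composite to the identity through an enlargement map that ``adds $w$ wherever $v$ appears.'' The neighborhood containments $\outN_G(v)\subseteq\outN_G(w)$ and $\inN_G(v)\subseteq\inN_G(w)$ are exactly what make $c$ and $h$ multihomomorphisms; the only slightly hidden point is the case where both coordinates involve $v$ (a loop at $v$), but $(v,v)\in E(G)$ forces $(w,v)$, $(v,w)$, and $(w,w)$ to lie in $E(G)$ via the same containments, so that case is fine as well. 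The zig-zags $\id\le c\ge \iota f_*$ and $\id\le h\ge f^* g$ are paths of length two in the relevant $\Poset(-,-)$, hence give strong homotopies as required. The trade-off between the two approaches is the usual one: the citation route is shorter and more uniform (the result follows from Matsushita's strong-homotopy framework for arbitrary $r$-sets), whereas your direct proof makes completely transparent where the directed-folding hypothesis enters, and does not require importing the machinery of $r$-sets.
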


We note that Matsushita's proof of this result for $r$-sets relies on an auxiliary result regarding \emph{strong homotopy} of $r$-sets.  We further discuss this notion in Section \ref{sec:homotopy}.

\begin{example}
As an example of the folding result we refer to Figure \ref{fig:example2}.  Here we see that the vertex $4$ of the graph $G = C^1_3$ satisfies $\outN_G(4) \subset \outN_G(3)$ and $\inN_G(4) = \emptyset \subset \inN_G(3)$. Hence the folding $4 \mapsto 3$ induces a homotopy equivalence of the relevant $\Hom$ complexes.
\end{example}

\begin{rem}
In the context of digraphs, foldings exhibit some perhaps unexpected behavior.  For undirected graphs, a path of arbitrary length (in fact any finite tree) can be folded down to a single edge, so that all paths are in some sense equivalent as far as the Hom complexes are concerned.  In the directed case this is no longer the case, as a directed path $1 \rightarrow 2 \rightarrow \cdots \rightarrow n$ has no dismantlable vertex.  On the other hand we note that a path $1 \rightarrow 2 \leftarrow 3 \rightarrow 4 \leftarrow \cdots n$ consisting of alternating sinks and sources can be folded down to a single directed edge.  We discuss applications of this in Section \ref{sec:homotopy}.
\end{rem}

\section{Neighborhood complexes and other constructions}\label{sec:neighbor}

In his original study of topological bounds on chromatic number, Lov\'asz \cite{Lov} introduced the \defi{neighborhood complex} ${\mathcal N}(G)$ of an undirected graph $G$, by definition the simplicial complex on the vertices of $G$ where a face is given by all subsets that have a common neighbor. It was later shown by Babson and Kozlov \cite{BabKozCom} that the neighborhood complex could be recovered as a homomorphism complex, in the sense that there exists a homotopy equivalence ${\mathcal N}(G) \simeq \oldHom(K_2,G)$.  We will see that a similar result holds in the directed setting.  We start with a definition.

\begin{defn}\label{def:neighbor}
Suppose $G$ is a digraph.  The \defi{out-neighborhood complex} $\N(G)$ is the simplicial complex on vertex set $\{v \in V(G): \mathrm{indeg}(v)>0\}$, with facets given by the out neighborhoods $\outN_G(v)$ for all $v \in V(G)$.
\end{defn}

\begin{figure}[H]
\begin{subfigure}[]{0.3\textwidth}
    \centering
    \begin{tikzpicture}
\node (a) at (0,0) {1};
\node (b) at (0,-2) {2};
\node (c) at (2,0) {3};
\node (d) at (2,-2) {4};
\node (e) at (-1,-1.25) {5};
\draw[->-=.7] (a)--(b);
\draw[->-=.73] (c)--(a);
\draw[->-=.7] (c)--(b);
\draw[->-=.7] (d)--(a);
\draw[->-=.73] (c)--(d);
\draw[->-=.7] (e)--(a);
\draw[->-=.7] (e)--(c);
\draw[->-=.7] (e)--(d);
\draw[->-=.7] (b)--(d);
%\draw[->] (b) edge (d) (c) edge (d);
\end{tikzpicture}
\caption{$G$}
\end{subfigure}
\begin{subfigure}[]{0.3\textwidth}
    \centering
     \begin{tikzpicture}[scale=1.3]
   % NODES
    
   \pattern[pattern=north east lines,opacity=0.5] (0.0,  0.0)--(-0.9,-0.5)--(1.5, -0.5)--cycle;
   \pattern[pattern=north west lines,opacity=0.5] (-0.5, -1.5)--(-0.9,-0.5)--(1.5, -0.5)--cycle;
   %\fill[fill=gray!20] (0.0,  0.0)--(-1.0, -1.5)--(1.0, -1.5);
  \path[draw] (0.0,  0.0)--(-0.9,-0.5);
  \path[draw] (-0.9,-0.5)--(1.5, -0.5);
  \path[draw] (0.0,  0.0)--(1.5, -0.5);
  \path[draw] (-0.5, -1.5)--(-0.9,-0.5);
  \path[draw] (-0.5, -1.5)--(1.5, -0.5);
  
  \node[dot] (a) at ( 0.0,  0.0) {};
  \node[dot] (b) at (-0.5, -1.5) {};
 % \node[dot] (c) at ( 1.0, -1.5) {};
  \node[dot] (e) at ( 1.5, -0.5) {};
  \node[dot] (f) at (-0.9,-0.5) {};
   \node[inner sep=0pt] () at ( -0.1,  0.2) {{$3$}};
   \node[inner sep=0pt] () at ( -0.7,  -1.5) {{$2$}};
   %\node[inner sep=0pt] () at ( 1.2,  -1.55) {{$5$}};
   \node[inner sep=0pt] () at ( 1.5,  -0.25) {{$4$}};
   \node[inner sep=0pt] () at ( -1.1,  -0.5) {{$1$}};
   \end{tikzpicture}
    \caption{$\N(G)$}
\end{subfigure}
\begin{subfigure}[]{0.3\textwidth}
    \centering
    \begin{tikzpicture}[scale=1.3]
   % NODES
    
   \pattern[pattern=north east lines,opacity=0.5] (0.0,  0.0)--(-0.5, -1.5)--(1.0, -1.5)--cycle;
   \pattern[pattern=north west lines,opacity=0.5] (0.0,  0.0)--(1.5, -0.5)--(1.0, -1.5)--cycle;
   %\fill[fill=gray!20] (0.0,  0.0)--(-1.0, -1.5)--(1.0, -1.5);
   \path[draw] ( 0.0,  0.0)--(-0.5, -1.5);
   \path[draw] (-0.5, -1.5)--(1.0, -1.5);
   \path[draw] ( 0.0,  0.0)--(1.0, -1.5);
   \path[draw] ( 0.0,  0.0)--(1.5, -0.5);
   \path[draw] (1.0, -1.5)--(1.5, -0.5);
   \path[draw] ( 0.0,  0.0)--(-0.9,-0.5);
   
   \node[dot] (a) at ( 0.0,  0.0) {};
   \node[dot] (b) at (-0.5, -1.5) {};
   \node[dot] (c) at ( 1.0, -1.5) {};
   \node[dot] (e) at ( 1.5, -0.5) {};
   \node[dot] (f) at (-0.9,-0.5) {};
   \node[inner sep=0pt] () at ( -0.1,  0.2) {{$3$}};
   \node[inner sep=0pt] () at ( -0.7,  -1.5) {{$2$}};
   \node[inner sep=0pt] () at ( 1.2,  -1.55) {{$5$}};
   \node[inner sep=0pt] () at ( 1.5,  -0.25) {{$4$}};
   \node[inner sep=0pt] () at ( -1.1,  -0.5) {{$1$}};
   \end{tikzpicture}
    \caption{$\Nin(G)$}
\end{subfigure}
   \caption{A graph $G$, along with its out- and in-neighborhood complexes.}\label{fig:exampleofnbdcomplex}
   \label{fig:outcomplex}
\end{figure}
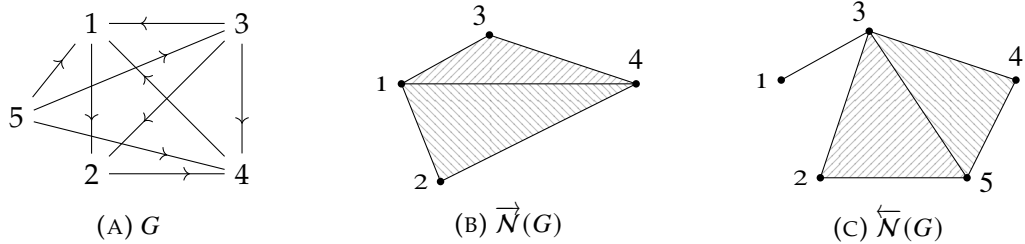

Note that vertices $v \in V(G)$ with $\indeg(v) = 0$ are not vertices of the simplicial complex $\N(G)$, but are still used to define the facet $\outN_G(v)$.

\begin{rem}
The class of directed neighborhood complex includes neighborhood complexes of undirected graphs as a special case, in the following sense.   Given an undirected graph $G$, define a digraph $\overrightarrow{G}$ with vertex set $V(\overrightarrow{G})=V(G)\times \{0\} \sqcup V(G) \times \{1\}$ and with a directed edge from $(v,0)$ to $(w,1)$ in $\overrightarrow{G}$ if and only if $vw\in E(G)$. Then one can check that $\N(\overrightarrow{G})\cong \mathcal{N}(G). $
\end{rem}

We refer to Figure \ref{fig:outcomplex} for an example. Next let ${\mathcal P}(\N(G))$ denote the face poset of the out-neighborhood complex.  We then have the following observation.

\begin{thm}\label{thm:homandnbd}
For any digraph $G$, we have a strong homotopy equivalence 
\[\Hom(K_2,G) \simeq {\mathcal P}(\N(G)).\]
In particular, $\Hom(K_2,G)$ and $\N(G)$ are homotopy equivalent.
\end{thm}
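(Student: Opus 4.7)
The plan is to apply the poset fiber theorem (Lemma \ref{lem:posetfiber}) to a natural projection. Define $\pi: P(K_2, G) \to \mathcal{P}(\N(G))$ on the face poset of $\Hom(K_2, G)$ by sending a multihomomorphism $(A, B)$ (a pair of nonempty vertex sets with $A \times B \subseteq E(G)$) to its second component $B$. This is well-defined: since $A$ is nonempty, any $a \in A$ is a common in-neighbor of $B$, so $B \subseteq \outN_G(a)$ is a face of $\N(G)$. Order preservation is immediate from the definition of the order on $P(K_2, G)$.

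Next I would verify the two hypotheses of Lemma \ref{lem:posetfiber}. For $B \in \mathcal{P}(\N(G))$, the fiber $\pi^{-1}(B)$ consists of pairs $(A, B)$ with $\emptyset \neq A \subseteq \bigcap_{b \in B} \inN_G(b)$. This intersection is nonempty (any common in-neighbor of $B$ lies in it), so the fiber is the poset of nonempty subsets of $\bigcap_{b \in B} \inN_G(b)$ — the face poset of a simplex with a maximum element, hence contractible. For the second condition, fix $(A, B) \in P(K_2, G)$ and $B' \in \mathcal{P}(\N(G))$ with $B \supseteq B'$. Then $A \times B' \subseteq A \times B \subseteq E(G)$, so the poset $\pi^{-1}(B') \cap P_{\leq (A, B)}$ consists of the pairs $(A', B')$ with $\emptyset \neq A' \subseteq A$ and admits the maximum element $(A, B')$.

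Applying Lemma \ref{lem:posetfiber} produces a homotopy equivalence $|P(K_2, G)| \simeq |\mathcal{P}(\N(G))|$; since the former is homeomorphic to $\Hom(K_2, G)$ as the barycentric subdivision of a regular CW complex and the latter is the barycentric subdivision of $\N(G)$, this also yields $\Hom(K_2, G) \simeq \N(G)$.

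The main obstacle lies in upgrading this to a strong homotopy equivalence of posets in the sense of Section \ref{sec:collapses}. The obvious closure candidate $(A, B) \mapsto (\bigcap_{b \in B} \inN_G(b), B)$ fails to be a poset map because $B \mapsto \bigcap_{b \in B} \inN_G(b)$ is order-reversing in $B$, so there is no direct deformation retract from $P(K_2,G)$ onto the fiber maxima. My plan would be to invoke discrete Morse theory via Proposition \ref{prop:dmtmain}: the standard element-matching on each fiber $\pi^{-1}(B)$ collapses it to its maximum $(\bigcap_{b \in B} \inN_G(b), B)$, and one must assemble these into a single acyclic matching on $P(K_2, G)$ whose critical cells form a subposet isomorphic to $\mathcal{P}(\N(G))$. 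The delicate point is choosing the matching vertex $v_B \in \bigcap_{b \in B} \inN_G(b)$ coherently as $B$ varies, so that no alternating cycle arises between fibers of different $B$ values; this global acyclicity is where I expect the main effort in the proof to lie.
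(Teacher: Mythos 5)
Your proof through the first three paragraphs reproduces the paper's argument exactly: the paper defines the same projection $\varphi(\eta) = \eta(2)$, verifies the same two hypotheses of Lemma~\ref{lem:posetfiber} (identifying $(\inN(B), B)$ as the fiber maximum and $(C,B)$ as the restricted fiber maximum), and then concludes directly with ``the result then follows from Lemma~\ref{lem:posetfiber}.'' The paper does \emph{not} carry out the discrete-Morse-theoretic refinement you outline in your final paragraph, nor does it construct an explicit inverse poset map or closure map.

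Your worry about the ``strong'' qualifier is a fair reading of the text: Lemma~\ref{lem:posetfiber} as stated in the paper asserts only a homotopy equivalence, and you are right that the naive candidate $B \mapsto (\inN(B), B)$ fails to be order-preserving. However, since the paper itself stops at the Lemma~\ref{lem:posetfiber} step (apparently leaning on the cited Babson--Kozlov result to deliver more than what is quoted), your caution exceeds the paper's own treatment; the part of your argument that you regard as ``the main effort'' is simply not present in the paper. In other words, your proposal already matches the paper's proof in full, and the incompleteness you flag in your DMT sketch is a gap in the paper's exposition rather than in your reproduction of it. If you did want to pursue the upgrade, note that the obstacle is more serious than a coherent choice of matching vertex: the fiber maxima $\{(\inN(B),B)\}$ do not even form a subposet of $\Hom(K_2,G)$ order-isomorphic to ${\mathcal P}(\N(G))$, so any closure-map or retraction argument would have to target a different subposet entirely.
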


\begin{proof}
Suppose $V(K_2)=\{1,2\}$ and let $12 = (1,2)$ denote the single edge of $K_2$. Define a poset map $\varphi : \Hom(K_2,G) \rightarrow {\mathcal P}(\N(G))$ by $\varphi(\eta)=\eta(2)$. Clearly, $\eta(2)\subseteq \outN_G(v)$ for each $v \in \eta(1)\neq \emptyset$, so that $\varphi$ is well-defined.

We first show that for each $B\in {\mathcal P}(\N(G))$, the set $\varphi^{-1}(B)$ has a maximal element. For this let $B\in {\mathcal P}(\N(G))$. Since $\varphi^{-1}(B)= \{(A,B): ab\in E(G),~\forall~ a\in A , b \in B\}$, we have that $(\inN_G(B),B)$ is the maximal element of $\varphi^{-1}(B)$. Here, we use the notation $\inN_G(B)=\{a \in V(G): ab \in E(G)~\forall~ b \in B\}$.

Next we prove that for each $B \in {\mathcal P}(\N(G))$ and $(C,D)\in \varphi^{-1}({\mathcal P}(\N(G))_{\geq B})$, the poset
\[\varphi^{-1}(B)\cap \Hom(K_2,G)_{\leq (C,D)}\]
\noindent
has a maximal element. Let $B \in {\mathcal P}(\N(G))$ and $(C,D)\in  \Hom(K_2,G)$ such that $\varphi((C,D))=D \supseteq B $. Clearly,  $\inN_G(B)\supseteq \inN_G(D)\supseteq C\neq \emptyset$ and $\varphi^{-1}(B)\cap \Hom(K_2,G)_{\leq (C,D)}= \{(A,B): A\subseteq C, A\neq \emptyset\}$. Therefore, $(C,B)$ is the maximal element of $\varphi^{-1}(B)\cap \Hom(K_2,G)_{\leq (C,D)}$. The result then follows from Lemma \ref{lem:posetfiber}.
\end{proof}

For any digraph $G$, we can equally well define the \defi{in-neighborhood complex} $\overleftarrow{\mathcal{N}}(G)$ on vertex set $\{v \in V(G): \mathrm{outdeg}(v)>0\}$, with facets given by the in-neighborhoods $\inN_G(v)$ for all $v \in V(G)$.  See Figure \ref{fig:exampleofnbdcomplex} for an example.  As one might expect from Theorem \ref{thm:homandnbd} this does not give anything new.

\begin{prop}\label{prop:outin}
For any digraph $G$, we have a strong homotopy equivalence
\[{\mathcal P}(\overleftarrow{\mathcal{N}}(G)) \simeq {\mathcal P}(\overrightarrow{\mathcal{N}}(G)).\]
\end{prop}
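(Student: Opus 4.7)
The plan is to reduce to Theorem \ref{thm:homandnbd} via a duality that interchanges in- and out-neighborhoods. For a directed graph $G$ let $G^{\mathrm{op}}$ denote the \emph{opposite digraph} on the same vertex set, with $(u,v)\in E(G^{\mathrm{op}})$ if and only if $(v,u)\in E(G)$. Directly from Definition \ref{def:neighbor} we have $\overrightarrow{N}_{G^{\mathrm{op}}}(v)=\overleftarrow{N}_G(v)$ for every vertex $v$, so we get the equality of simplicial complexes $\overrightarrow{\mathcal{N}}(G^{\mathrm{op}})=\overleftarrow{\mathcal{N}}(G)$, and hence of their face posets.

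The next step is to identify $\Hom(K_2,G^{\mathrm{op}})$ with $\Hom(K_2,G)$ as posets. The vertex swap $\sigma\colon 1\leftrightarrow 2$ sends the unique edge $(1,2)$ of $K_2$ to $(2,1)$, so it is an isomorphism of directed graphs $\sigma\colon K_2\xrightarrow{\cong}K_2^{\mathrm{op}}$. Moreover, passing to opposites is a contravariant involution that gives a natural poset isomorphism $\Hom(A,B)\cong\Hom(A^{\mathrm{op}},B^{\mathrm{op}})$ (a multihomomorphism is the same data regardless of the direction we read edges). Composing, we obtain
\[
\Hom(K_2,G^{\mathrm{op}})\;\cong\;\Hom(K_2^{\mathrm{op}},G)\;\cong\;\Hom(K_2,G),
\]
where the first isomorphism is induced by $(-)^{\mathrm{op}}$ and the second by $\sigma$.

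Finally, applying Theorem \ref{thm:homandnbd} to both $G$ and $G^{\mathrm{op}}$ yields strong homotopy equivalences of posets $\Hom(K_2,G)\simeq \mathcal{P}(\overrightarrow{\mathcal{N}}(G))$ and $\Hom(K_2,G^{\mathrm{op}})\simeq \mathcal{P}(\overrightarrow{\mathcal{N}}(G^{\mathrm{op}}))$. Stringing everything together gives
\[
\mathcal{P}(\overleftarrow{\mathcal{N}}(G))\;=\;\mathcal{P}(\overrightarrow{\mathcal{N}}(G^{\mathrm{op}}))\;\simeq\;\Hom(K_2,G^{\mathrm{op}})\;\cong\;\Hom(K_2,G)\;\simeq\;\mathcal{P}(\overrightarrow{\mathcal{N}}(G)),
\]
which is the desired strong homotopy equivalence, since isomorphisms of posets are in particular strong homotopy equivalences and strong homotopy equivalences compose.

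There is no real obstacle beyond the bookkeeping of the duality: the only thing to verify is that the natural poset isomorphism $\Hom(A,B)\cong\Hom(A^{\mathrm{op}},B^{\mathrm{op}})$ is well-defined on multihomomorphisms, which is immediate from the definition of $\Hom$ since the condition ``$\alpha(x)\times\alpha(y)\subset E(H)$ whenever $(x,y)\in E(G)$'' becomes ``$\alpha(y)\times\alpha(x)\subset E(H^{\mathrm{op}})$ whenever $(y,x)\in E(G^{\mathrm{op}})$,'' and these are the same constraint. As an alternative, one could instead mimic the proof of Theorem \ref{thm:homandnbd} verbatim with the map $\psi\colon \Hom(K_2,G)\to\mathcal{P}(\overleftarrow{\mathcal{N}}(G))$ defined by $\psi(\eta)=\eta(1)$ and apply Lemma \ref{lem:posetfiber} with the fiber-maximal element $(A,\overrightarrow{N}(A))$ in place of $(\overleftarrow{N}(B),B)$; the check is symmetric to the one already carried out.
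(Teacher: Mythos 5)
Your argument is correct, and it takes a route that is genuinely different from both proofs the paper offers. The paper's first proof mimics the argument of Theorem \ref{thm:homandnbd} on the other coordinate, checking that $\psi(\eta)=\eta(1)$ satisfies the hypotheses of Lemma \ref{lem:posetfiber}, and then composing the two quasi-inverses; the paper's second proof is a Dowker-style duality via a bipartite incidence graph $B_G$. You instead introduce the opposite digraph $G^{\mathrm{op}}$, observe $\overrightarrow{\mathcal{N}}(G^{\mathrm{op}})=\overleftarrow{\mathcal{N}}(G)$, note that $(-)^{\mathrm{op}}$ together with the isomorphism $K_2\cong K_2^{\mathrm{op}}$ yields a poset isomorphism $\Hom(K_2,G^{\mathrm{op}})\cong\Hom(K_2,G)$, and then invoke Theorem \ref{thm:homandnbd} twice. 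This is a categorical packaging of the in/out symmetry that avoids any re-verification of the poset fiber conditions: once the easy identities are noted, the result is pure bookkeeping with composable strong homotopy equivalences. The trade-off is that you introduce the $(-)^{\mathrm{op}}$ involution (which the paper never makes explicit), whereas the paper's first proof stays entirely inside the notation already set up, and the Dowker proof highlights the shared bipartite incidence structure of the two nerve complexes. All three are sound; your duality reduction is, if anything, the most transparent about \emph{why} the statement should be true. You also correctly identify, as an aside, the approach the paper actually takes first.
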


\begin{proof}
Using similar arguments as in Theorem \ref{thm:homandnbd}, it is easy to see that the map $\psi: \Hom(K_2,G) \rightarrow {\mathcal P}(\overleftarrow{\mathcal{N}}(G))$ defined as $\psi(\eta)=\eta(1)$ is a strong homotopy equivalence.  The result then follows from the composition $\varphi \circ \psi$, where $\varphi$ is the map from the proof of Theorem \ref{thm:homandnbd}. 

We also include a proof of homotopy equivalence that uses ideas that go back to Dowker \cite{Dow}. For the given digraph $G$, define a bipartite graph $B_G$ on vertex set $V_0\sqcup V_1$, where $V_0=\{(v,0)\in V(G)\times \{0\} : \mathrm{outdeg}_G(v)>0 \}$ and $V_1=\{(w,1)\in V(G)\times \{1\} : \mathrm{indeg}_G(w)>0 \}$, and $(v,0)(w,1)\in E(B_G)$ if and only if $vw\in E(G)$. Using the notation of \cite[Theorem 10.9]{Bjo95}, we have that $\Delta_0\cong \Nin(G)$ and $\Delta_1\cong \N(G)$, and from this it follows that $\Nin(G)\simeq \N(G)$.
\end{proof}

\begin{rem}
There is yet another way to prove Theorem \ref{thm:homandnbd} and Proposition \ref{prop:outin} that we wish to sketch.  For this let $G$ and $H$ be digraphs and suppose $I \subset G$ is an independent set (a collection of vertices with no edges between them).  As in Csorba \cite{CsoThesis} we can define a complex $\Hom_I(G,H)$ consisting of all multihomorphisms $\alpha:V(G) \backslash I \rightarrow 2^{V(H)} \backslash \{\emptyset\}$ that extend to $G$.  Following the proof of \cite[Theorem 2.36]{CsoThesis} one can show that $\Hom_I(G,H)$ and $\Hom(G,H)$ are homotopy equivalent.  Now suppose $G = K_2$ is a directed edge $1 \rightarrow 2$.  On the one hand if we let $I = \{1\}$ we see that $\Hom_I(K_2,H)$ is isomorphic as a simplicial complex to $\N(H)$, whereas if we take $I = \{2\}$ we get $\Hom_I(K_2,H) \cong \Nin(H)$.
\end{rem}

In the case of undirected graphs, for any tree $T$ the neighborhood complex ${\mathcal N}(T)$ is homotopy equivalent to $0$-dimensional sphere (two isolated points) since $T$ folds down to an edge. In the case of directed trees, we have something similar.

\begin{prop}
For any directed tree $T$, the complex $\N(T)$ is homotopy equivalent to a wedge of $0$-dimensional spheres.
\end{prop}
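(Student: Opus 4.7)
The plan is to invoke Theorem~\ref{thm:homandnbd} to reduce the statement to showing that $\Hom(K_2,T)$ is homotopy equivalent to a wedge of $0$-spheres, and then to exploit the tree structure of $T$ to pin down the cellular shape of $\Hom(K_2,T)$. The key observation is that the underlying undirected graph of $T$ contains no $4$-cycle, and in particular no $K_{2,2}$, so any multihomomorphism $\alpha : V(K_2) \to 2^{V(T)}\setminus \{\emptyset\}$ must satisfy $|\alpha(1)|=1$ or $|\alpha(2)|=1$ (otherwise the four required edges would create a $C_4$ in the underlying graph). Hence every cell of $\Hom(K_2,T)$ is a simplex, and the maximal cells are precisely the \emph{out-star} cells $C^{\mathrm{out}}_v = (\{v\},\outN_T(v))$ for $v$ with $\outdeg(v)>0$ and the \emph{in-star} cells $C^{\mathrm{in}}_w = (\inN_T(w),\{w\})$ for $w$ with $\indeg(w)>0$.

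I would then apply the closed nerve theorem to the cover of $\Hom(K_2,T)$ by these maximal cells. Two distinct out-star cells are disjoint, as are two distinct in-star cells, since any common $0$-cell would force the two tails (respectively heads) to coincide. The intersection $C^{\mathrm{out}}_v \cap C^{\mathrm{in}}_w$ is either empty or equals the single $0$-cell $(\{v\},\{w\})$, depending on whether $v\to w$ is a directed edge of $T$. Triple intersections are therefore automatically empty, so all non-empty finite intersections are contractible. Consequently the nerve of this cover is the bipartite incidence graph $B_T$ with vertex set $V_0 \sqcup V_1$, where $V_0 = \{v : \outdeg(v)>0\}$ and $V_1 = \{w : \indeg(w)>0\}$, and with one edge between $v \in V_0$ and $w \in V_1$ for each directed edge $v \to w$ of $T$. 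The nerve theorem then yields a homotopy equivalence $\Hom(K_2,T) \simeq B_T$.

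To finish, I would show that $B_T$ is a forest. A simple cycle $y_1 y_2 \cdots y_{2k} y_1$ in $B_T$ alternates between $V_0$ and $V_1$, and each of its edges corresponds to a distinct directed edge of $T$. Since $T$ contains no pair $(v,w),(w,v)$ of oppositely directed edges (its underlying graph is a simple tree), these directed edges project to $2k$ distinct undirected edges of $T$ and yield a nontrivial closed walk in $T$ that traverses each edge at most once. But removing any edge of a tree disconnects it, so any closed walk must cross every edge it uses an even number of times; a walk traversing $2k \geq 4$ distinct edges exactly once therefore cannot close, a contradiction. Hence $B_T$ is a forest, each of its connected components is contractible, and so $\N(T) \simeq \Hom(K_2,T) \simeq B_T$ is homotopy equivalent to a finite discrete set, that is, to a wedge of $0$-dimensional spheres.

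The main subtlety will be justifying the good-cover hypothesis of the nerve theorem carefully, and handling the fact that a single vertex of $T$ may contribute two distinct vertices to $B_T$ (one in $V_0$ and one in $V_1$). In the latter case a cycle in $B_T$ may project to a closed walk in $T$ that revisits vertices, so the final argument must be phrased in terms of undirected edges traversed rather than vertices visited in order to extract the needed contradiction with the tree property.
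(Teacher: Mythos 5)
Your proof is correct, but it takes a genuinely different route from the paper's. The paper argues by induction on $|V(T)|$: pick a leaf $v$ of $T$, and depending on the orientation of its unique incident edge, show either that $\N(T)$ is $\N(T\backslash\{v\})$ plus one additional (possibly isolated) vertex, or that $\N(T)$ collapses onto $\N(T\backslash\{v\})$ via the free pair $(\{v\},\outN_T(u))$. Your argument is instead global: you observe that since the underlying graph of $T$ has no $4$-cycle, every cell of $\Hom(K_2,T)$ is dominated by an out-star or an in-star, apply the nerve theorem to this cover, and show the resulting nerve is the bipartite incidence graph $B_T$, which you prove is a forest by a parity argument about closed walks in trees. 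This is valid, and nicely connects with the bipartite graph $B_G$ the paper itself introduces in the Dowker-style proof of Proposition~\ref{prop:outin}. A few minor points: your claim that the out-stars and in-stars are ``precisely the maximal cells'' is slightly off (e.g.\ if $\outN_T(v)=\{w\}$ and $\inN_T(w)\supsetneq\{v\}$ then $C^{\mathrm{out}}_v\subseteq C^{\mathrm{in}}_w$), but this is harmless since you use them only as a cover, and the nerve lemma tolerates nested or coinciding cover elements; you should cite the closed-subcomplex version of the nerve lemma (Bj\"orner's) applied to the closures of these cells in the regular CW structure of $\Hom(K_2,T)$; and the justification ``its underlying graph is a simple tree'' for the absence of oppositely directed edges really rests on the convention that a directed tree is an \emph{orientation} of a tree, a convention the paper also uses implicitly (its definition of leaf assumes $\outdeg_T(v)+\indeg_T(v)=1$). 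The paper's induction is shorter and more elementary; your nerve argument is more conceptual and makes the structure of $\Hom(K_2,T)$ transparent.
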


\begin{proof}
We prove this by induction on the number of vertices of $T$. If $|V(T)|=2$, then the statement is clear.  Suppose $T$ be a directed tree on more than $2$ vertices and let $v$ be a leaf vertex in $T$, {\it i.e.} $\outdeg_T(v)+\indeg_T(v)=1$.  If $\indeg_T(v)=1$ and $(u,v)\in E(T)$, then either $\outN_T(u)=\{v\}$ or $(\{v\},\outN_T(u))$ is a free pair. In the former case $\N(T)=\N(T\backslash \{v\})\cup \{v\}$, and in the latter case the complex $\N(T)$ collapses onto $\N(T\backslash \{v\})$. In either case the result follows from induction. If $\outdeg_T(v)=1$ and $(v,u)\in E(T)$, then $\N(T)=\N(T\backslash \{v\})\cup \{u\}$ and again the result follows from induction.
\end{proof}

A natural question to ask is what other kinds of topology the directed neighborhood complexes can attain. In the undirected setting Csorba has shown \cite[Theorem 7]{CsoHom} that the homotopy types of neighborhood complexes ${\mathcal N}(G)$ of simple graphs (without loops) are precisely those of finite free ${\mathbb Z}_2$-complexes. Hence this puts restrictions on the possible homotopy types of ${\mathcal N}(G)$ (for instance ${\mathcal N}(G)$ cannot have odd Euler characteristic).  On the other hand the first author has shown in \cite{DocUni} that if one allows loops on the vertices, then for any finite connected graph $T$ (with at least one edge), any homotopy type can be achieved as $\oldHom(T,G)$ for some choice of $G$.

For the case of digraphs, there is a similar universality for the directed neighborhood complexes, even when one restricts to simple directed graphs. 

%In particular we see that any finite simplicial complex can be obtained as the out neighborhood complex of some acyclic directed graph.
    
\begin{prop}\label{prop:universal}
Suppose $X$ is any finite simplicial complex.  Then there exists a digraph $G$ (without loops) such that $\N(G) \cong X$, an isomorphism of simplicial complexes.
\end{prop}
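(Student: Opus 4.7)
The plan is to give a direct construction: from any finite simplicial complex $X$ I will build a bipartite-like directed graph $G$ whose out-neighborhoods of ``source'' vertices are precisely the facets of $X$, while the ``sink'' vertices correspond to the vertices of $X$.

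Let $V(X) = \{x_1, \ldots, x_n\}$ and let $F_1, \ldots, F_k$ denote the facets of $X$. Define $G$ on vertex set $V(G) = \{v_1, \ldots, v_n\} \sqcup \{w_1, \ldots, w_k\}$, with edge set
\[
E(G) = \{(w_j, v_i) : x_i \in F_j\}.
\]
No loops are introduced since every edge goes from a $w$-vertex to a $v$-vertex, and the two types are disjoint.

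The verification that $\N(G) \cong X$ proceeds by checking three things in turn. First, since every $x_i \in V(X)$ lies in some facet $F_j$ (as $\{x_i\}$ is itself a face, and hence contained in some facet), each $v_i$ has $\indeg_G(v_i) \geq 1$, whereas every $w_j$ has $\indeg_G(w_j) = 0$. Hence the vertex set of $\N(G)$ is exactly $\{v_1, \ldots, v_n\}$, which I identify with $V(X)$ via $v_i \leftrightarrow x_i$. Second, by construction $\outN_G(w_j) = \{v_i : x_i \in F_j\}$ corresponds exactly to the facet $F_j$, while $\outN_G(v_i) = \emptyset$ for each $i$. Thus under the identification above the collection of sets $\{\outN_G(v) : v \in V(G)\}$ that generate $\N(G)$ consists precisely of the facets $F_1, \ldots, F_k$ of $X$ (together with the empty set, which contributes nothing). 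Third, since a simplicial complex is determined by its facets, closing these generators under taking subsets recovers $X$, giving the desired isomorphism $\N(G) \cong X$.

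There is no real obstacle here; the only delicate point to flag is handling vertices of $X$ that happen to be isolated (i.e.\ where $\{x_i\}$ is itself a facet): these are covered automatically because the corresponding $w_j$ supplies the single edge $(w_j, v_i)$, which both ensures $v_i \in V(\N(G))$ and produces the singleton facet $\{v_i\}$ of $\N(G)$. This also explains why the universality is stronger than in the undirected case: the ``source'' vertices $w_j$ play the role of auxiliary witnesses without themselves appearing in $\N(G)$, freeing the construction from any $\Z_2$-action constraint of the kind Csorba observed in the undirected setting.
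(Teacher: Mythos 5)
Your construction is exactly the one the paper uses: introduce an auxiliary ``source'' vertex for each facet $F$ of $X$, with out-neighborhood equal to $F$, so that the out-neighborhood complex recovers $X$ on the original vertices. Your write-up is correct and simply fills in the routine verifications (the in-degree condition, the dispensability of the source vertices, the recovery of $X$ from its facets) that the paper leaves implicit.
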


\begin{proof}
Suppose $X$ is a finite simplicial complex on vertex set $V$. For our digraph $G$, we take its vertex set $V(G)$ to consist of the set $V$ as well as an additional vertex $v_F$ for each facet $F \in X$.  For the edges of $G$, we construct a directed edge from the vertex $v_F$ to each vertex $v \in V$ where $v \in F$. We see that $\N(G)$ is a simplicial complex on vertex set $V$, with facets given by the facets of $X$.
\end{proof}
    
What if we restrict to neighborhood complexes of tournaments (oriented complete graphs)? Is it still true that any homotopy type can be achieved as $\N(T_n)$ for some choice of tournament $T_n$?  We address this question in Section \ref{sec:tourn}.

\subsection{Mycielskian and other constructions}\label{sec:Myc}
In the context of undirected graphs, the notion of the Mycielskian of a graph plays an important role in the study of homomorphism complexes.  To recall the definitions, let $J_n$ denote the undirected graph on vertex set $\{0,1,\dots, n\}$ with edges $\{01,12, \dots (n-1)n\}$ as well as the loop $00$. For a graph $G$ and integer $n \geq 2$, the \defi{Mycielskian} graph ${\mathcal M}_n(G)$ is defined as the quotient graph
\[{\mathcal M}_n(G) = G \times J_n / \sim,\]
where $(g,n) \sim (g^\prime,n)$ for all $g, g^\prime \in G$.  The notation $\mu(G)$ is also sometimes used to indicate the construction ${\mathcal M}_2(G)$.   

The Mycielskian construction preserves the property of being triangle-free but increases the chromatic number.  In particular, the \emph{Gr\"otzsch graph} = ${\mathcal M}_2(C_5)$ is a triangle-free graph with chromatic number 4. By applying the Mycielskian construction repeatedly to a triangle-free starting graph, Mycielski showed in \cite{Myc} that there exist triangle-free graphs with arbitrarily large chromatic number.

 In \cite[Theorem 3.1]{Cso} Csorba proved that for every (undirected) graph $G$ and every $r \geq 1$, the homomorphism complex $\oldHom(K_2, {\mathcal M}_r(G))$ is ${\mathbb Z}_2$-homotopy equivalent to $S (\oldHom(K_2, G))$, the suspension of $\oldHom(K_2,G)$. We next discuss a version of the Mycielskian construction for digraphs, where now several choices can be made regarding orientations.

\begin{defn}[Directed Mycielskian]\label{def:Mycielskian}
Define the directed graphs $I^1$, $I^2$, and $I^3$ as in Figure \ref{fig1}.  For a digraph $G$, we define following.
\begin{equation}\label{eq:mycielski}
    \begin{split}
        M^1(G) & =G\times I^1 / \{(g,2)\sim (h,2) ~~ \forall g,h\in V(G)\}, \\
        M^2(G) & =G\times I^2 / \{(g,2)\sim (h,2) ~~ \forall g,h\in V(G)\}, \text{ and }\\
        M^3(G) & =G\times I^3 / \{(g,2)\sim (h,2) \text{ and } (g,0)\sim (h,0)~~ \forall g,h\in V(G)\}.
    \end{split}
\end{equation}
\end{defn}
    \begin{figure}[H]
\begin{subfigure}[]{0.3\textwidth}
    \centering
    \begin{tikzpicture}[scale=0.85]
\node (a) at (0,0) {0};
\node (b) at (1.5,0) {1};
\node (c) at (3,0) {2};
\draw[->-=.5] (a) to [out=120,in=60,looseness=10] (a);
\draw[->-=.5] (a)--(b) node[midway,above right] {};
\draw[->-=.5] (b)--(c) node[midway,above right] {};
% \draw[->] (b) edge (d) (c) edge (d);
\end{tikzpicture}
\caption{$I^1$}
\end{subfigure}
\begin{subfigure}[]{0.3\textwidth}
    \centering
    \begin{tikzpicture}[scale=0.85]
\node (a) at (0,0) {0};
\node (b) at (1.5,0) {1};
\node (c) at (3,0) {2};
\draw[->-=.5] (a) to [out=120,in=60,looseness=10] (a);
\draw[->- =.5] (b)--(a) node[midway,above right] {};
\draw[->-=.5] (c)--(b) node[midway,above right] {};
% \draw[->] (b) edge (d) (c) edge (d);
\end{tikzpicture}
\caption{$I^2$}
\end{subfigure}
\begin{subfigure}[]{0.3\textwidth}
    \centering
    \begin{tikzpicture}[scale=0.85]
\node (a) at (0,0) {0};
\node (b) at (1.5,0) {1};
\node (c) at (3,0) {2};
\draw[->-=.5] (b) to [out=120,in=60,looseness=10] (b);
\draw[->- =.5] (a)--(b) node[midway,above right] {};
\draw[->-=.33] (b)--(c) node[midway,above right] {};
% \draw[->] (b) edge (d) (c) edge (d);
\end{tikzpicture}
\caption{$I^3$}
\end{subfigure}
   \caption{The graphs $I^1$, $I^2$, and $I^3$.}\label{fig1}
\end{figure}
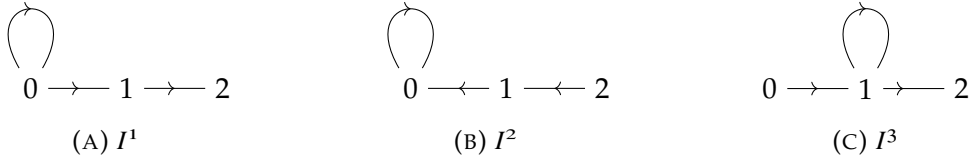

\begin{prop}\label{prop:Mycielskian}
Suppose $G$ is a digraph with $|V(\N(G))|=n>0$. Then we have a 
homotopy equivalence 
\begin{equation*}
    \N(M^1(G))  \simeq \N(G) \sqcup \{\text{pt}\},
\end{equation*}
and an isomorphism of simplicial complexes
\begin{equation*}
        \N(M^2(G)) \cong \N(G) \sqcup \Delta^{n-1}.
\end{equation*}
Here, $\Delta^m$ denotes a simplex of dimension $m$.  In particular, we have $\N(M^1(G)) \simeq \N(M^2(G))$.  In addition we have a homotopy equivalence
\begin{equation*}
 \N(M^3(G))  \simeq S (\N(G)),
 \end{equation*}
 where $S(X)$ denotes the suspension of the topological space $X$.
\end{prop}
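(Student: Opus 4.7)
The plan is to handle each of the three variants by computing the vertex set and facet list of $\N(M^i(G))$ directly from Definition~\ref{def:neighbor}, then reading off the claimed equivalence with elementary tools: two idempotent poset maps for $M^1$, a direct simplicial isomorphism for $M^2$, and a contractible-gluing argument for $M^3$. In all three cases $V(\N(M^i(G))) = \{v : \indeg_{M^i(G)}(v) > 0\}$ and the facets are the out-neighborhoods $\outN_{M^i(G)}(v)$, both determined by the edges of $I^i$ together with the identifications in~(\ref{eq:mycielski}).

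For $M^1(G)$, the loop at $0$ and the edges $0\to 1,\ 1\to 2$ in $I^1$ give vertex set $(V(\N(G))\times\{0,1\}) \sqcup \{*\}$, with facets $\outN_G(g)\times\{0,1\}$ (one for each $g$ with an out-neighbor) together with the isolated facet $\{*\}$ (arising since $\outN_{M^1(G)}((g,1)) = \{*\}$ while $\outN_{M^1(G)}(*) = \emptyset$). Writing $X$ for the subcomplex obtained by deleting $*$, I show $X \simeq \N(G)$ using two successive idempotent maps on $\mathcal{P}(X)$. The closure map $c(B) = B \cup \{(v,0) : (v,1)\in B\}$ is well-defined (since $\pi(c(B)) = \pi(B)$, where $\pi(v,\epsilon) = v$) and retracts onto the subposet of faces in which every $1$-vertex is accompanied by its $0$-counterpart; these are parameterized by pairs $(A,A')$ with $A'\subseteq A$ and $\emptyset \ne A$ a face of $\N(G)$. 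The dual kernel map $o(A,A') = (A,\emptyset)$, which deletes the $1$-level, retracts further onto the subposet isomorphic to $\mathcal{P}(\N(G))$. Both maps yield strong deformation retracts (Section~\ref{sec:collapses}), giving $\N(M^1(G)) \simeq \N(G) \sqcup \{\text{pt}\}$.

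For $M^2(G)$, vertex $2$ has no in-edges in $I^2$, so $*$ has in-degree $0$ and is omitted from $V(\N(M^2(G)))$. The out-neighborhoods $\outN_{M^2(G)}((g,0))$ and $\outN_{M^2(G)}((g,1))$ both equal $\outN_G(g)\times\{0\}$, while $\outN_{M^2(G)}(*) = V(\N(G))\times\{1\}$ contributes a single extra facet $\Delta^{n-1}$ on a disjoint vertex set. This yields the simplicial isomorphism $\N(M^2(G)) \cong \N(G) \sqcup \Delta^{n-1}$ at once. For $M^3(G)$, the vertex $*_0$ has in-degree $0$ and is omitted while $*_2$ survives, so the vertex set is $(V(\N(G))\times\{1\}) \sqcup \{*_2\}$. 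The facets are $\outN_G(g)\times\{1\} \cup \{*_2\}$ (one per $g$ with an out-neighbor) together with $\outN_{M^3(G)}(*_0) = V(\N(G))\times\{1\} = \Delta^{n-1}$. The first family is precisely the cone $C\N(G)$ on $\N(G)$ with apex $*_2$, so $\N(M^3(G)) = C\N(G) \cup \Delta^{n-1}$ with intersection equal to $\N(G)$. Collapsing the contractible subcomplex $\Delta^{n-1}$ then gives $\N(M^3(G)) \simeq C\N(G)/\N(G) = S(\N(G))$.

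I expect the main obstacle to be the $M^1$ case: the facets $\outN_G(g)\times\{0,1\}$ are ``doubled'' simplices whose topology does not immediately resemble $\N(G)$, and one must carefully exhibit the collapse of the $1$-layer onto the $0$-layer. The two-step poset retraction above is the cleanest route; alternatively, an acyclic matching pairing each face containing $(v,1)$ but not $(v,0)$ with its $(v,0)$-augmentation (with $v$ chosen by a fixed total order) would give the same conclusion via Proposition~\ref{prop:dmtmain}. The $M^2$ case is then direct bookkeeping, and $M^3$ reduces to a standard ``glue two contractible pieces along a common subcomplex'' suspension argument once the facet structure is in hand.
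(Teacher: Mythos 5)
Your computations of the vertex sets and facets of $\N(M^i(G))$ agree with the paper's, and your treatments of $M^2$ (direct simplicial isomorphism on disjoint vertex sets) and $M^3$ (cone union a full simplex, glued along $\N(G)$, then collapse the simplex) are essentially identical to the paper's proof. The one place where you diverge is $M^1(G)$. The paper considers the poset map $A\times 0 \cup B\times 1 \mapsto A\cup B$ onto $\mathcal{P}(\N(G))$ and verifies the hypotheses of the Babson--Kozlov poset fiber lemma (Lemma~\ref{lem:posetfiber}). You instead build the same retraction in two steps by hand: first the closure operator $c$ that fills in each $(v,0)$ below a present $(v,1)$, then the kernel operator $o$ that deletes the entire $1$-layer from the resulting down-closed pairs $(A,A')$. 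Both are valid poset-topology arguments using tools the paper sets up in Section~\ref{sec:collapses}; your route is a little more hands-on and in fact produces a strong deformation retract directly (and, as you note, could equivalently be phrased as an acyclic matching via Proposition~\ref{prop:dmtmain}), whereas the fiber lemma is a one-shot black box but only outputs a homotopy equivalence. The two proofs are interchangeable in substance; yours trades a citation for an explicit retraction.
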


\begin{proof}
It is easy to observe that $\N(M^1(G))\cong \{A\times 0 \cup B \times 1 : A\cup B \in \N(G)\} \sqcup \{[(g,2)]\}$. Let $P$ and $Q$ denote the face posets of $\N(M^1(G))\setminus \{[(g,2)]\}$ and $\N(G)$, respectively. Define a poset map $f: P \rightarrow Q$ as $f(A\times 0 \cup B \times 1)=A\cup B$. Clearly, $f^{-1}(q)$ has $q \times 0 \cup q\times 1$ as a maximal element and hence is contractible for any $q\in Q$. Further, if $p=A\times 0 \cup B\times 1\in P$ and $q\in Q$ such that $q \subseteq A\cup B$, then $(A\cap q)\times 0 \cup (B\cap q) \times 1$ is a maximal element of the poset $f^{-1}(q)\cap P_{\leq p}$. Therefore, from Lemma \ref{lem:posetfiber} we get that $\N(M^1(G))  \simeq \N(G) \sqcup \{[(g,2)]\}= \N(G) \sqcup \{pt\}$.

Now note that $\N(M^2(G)) = \{A\times 0 : A \in \N(G)\} \cup \{V(\N(G))\times 1\}$. Since $\{V(\N(G))\times 1\}$ is a simplex on $n$ vertices, we have $\N(M^2(G)) \cong \N(G) \sqcup \Delta^{n-1}$. The fact that $\N(M^3(G))  \simeq S (\N(G))$ is implied by the following observation:
\begin{equation*}
    \begin{split}
        \N(M^3(G)) & = \{(A\times 1) \cup [(g,2)]: A \in \N(G)\} \cup \{\outN_{M^3(G)}([(g,0)])\}   \\
        & = \text{Cone}_{[(g,2)]}(\N(G)\times 1) \cup 
        \{V(\N(G))\times 1\}.
    \end{split}
\end{equation*}
\end{proof}

\subsection{Bipartite graphs and vanishing homology}

    In the case of undirected graphs, results of Lov\'asz \cite{Lov} and Kahle \cite{Kah} imply that if $G$ is a graph not containing a complete bipartite subgraph $K_{a,b}$ for some $a+b = m$, then the neighborhood complex ${\mathcal N}(G)$ admits a strong deformation retract to a complex of dimension at most $m-3$ (and hence cannot have homology in degree larger than $m-3$).  In particular, if the max degree of $G$ is $d$ (so that $G$ is missing a subgraph of the form $K_{1,d+1}$), then ${\mathcal N}(G)$ cannot have homology in degree larger than $d-1$. We prove here that a similar result holds in the case of out-neighborhobd complex of any digraph. We start with a definition.

    \begin{defn} \label{def:bipartite}
       For integers $m,n$ define $\overrightarrow{K}_{m,n}$ to be the \defi{complete bipartite digraph}, the directed graph on vertex set $\{a_1,\dots,a_m,b_1, \dots, b_n\}$ and edge set $\{(a_i,b_j) : i \in [m], ~ j\in [n]\}$.
        \end{defn}

    \begin{prop}\label{prop:bipartite}
    Suppose $G$ is a digraph not containing a copy of $\overrightarrow{K}_{m,n}$ for some $m$ and $n$. Then the complex $\N(G)$ admits a strong deformation retract onto a complex of dimension at most $m+n-3$.
    \end{prop}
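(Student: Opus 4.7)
My approach is to exhibit a natural closure operator on the face poset of $\N(G)$ whose image has chain length bounded in terms of $m$ and $n$, and then invoke the closure-map principle of Section \ref{sec:collapses}. The hypothesis translates straightforwardly: $G$ contains no copy of $\overrightarrow{K}_{m,n}$ if and only if every subset $\sigma \subseteq V(G)$ with $|\sigma| \geq n$ has common in-neighborhood $\inN_G(\sigma) := \bigcap_{u \in \sigma}\inN_G(u)$ of size at most $m-1$. Dually I write $\outN_G(T) := \bigcap_{v \in T}\outN_G(v)$ for $T \subseteq V(G)$.

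First I would define
\[
c\colon \mathcal{P}(\N(G)) \setminus \{\emptyset\} \longrightarrow \mathcal{P}(\N(G)) \setminus \{\emptyset\},
\qquad c(\sigma) := \outN_G(\inN_G(\sigma)).
\]
Because $(\inN_G, \outN_G)$ is an antitone Galois connection on $2^{V(G)}$, the map $c$ is a closure operator in the sense of Section \ref{sec:collapses}: it is order preserving, $c(\sigma) \supseteq \sigma$, and $c \circ c = c$ (the routine check uses $\inN_G(c(\sigma)) = \inN_G(\sigma)$). Moreover $c(\sigma) \in \N(G)$ because $\inN_G(c(\sigma)) = \inN_G(\sigma) \neq \emptyset$. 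Applying the closure-map principle, $c$ induces a strong deformation retract of $|\mathcal{P}(\N(G)) \setminus \{\emptyset\}|$, which is the barycentric subdivision of $\N(G)$ and hence homeomorphic to $\N(G)$, onto the order complex of the image subposet of closed faces.

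It then suffices to bound the dimension of this order complex, namely the maximum length of a strictly increasing chain $\sigma_0 \subsetneq \sigma_1 \subsetneq \cdots \subsetneq \sigma_k$ of closed faces. Writing $T_i := \inN_G(\sigma_i)$, the Galois connection forces $T_0 \supsetneq T_1 \supsetneq \cdots \supsetneq T_k$ strictly. If $|\sigma_k| \leq n-1$, then the inequality $|\sigma_j| \geq j+1$ (from strict increase and $|\sigma_0| \geq 1$) gives $k \leq n-2 \leq m+n-3$. Otherwise set $i^* := \min\{i : |\sigma_i| \geq n\}$. From $|\sigma_{i^*-1}| \leq n-1$ combined with $|\sigma_{i^*-1}| \geq i^*$ I obtain $i^* \leq n-1$; and for $i \geq i^*$ the hypothesis gives $|T_i| \leq m-1$, so the strict decrease $|T_{i^*}| > |T_{i^*+1}| > \cdots > |T_k| \geq 1$ yields $k - i^* \leq m - 2$. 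Summing, $k \leq (n-1) + (m-2) = m+n-3$, completing the bound.

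The key conceptual observation is that the $\overrightarrow{K}_{m,n}$-freeness hypothesis translates precisely into a bound on chain lengths in the Galois closure lattice, which is exactly what is needed after the closure retraction; the only possible obstacle is the routine bookkeeping for the two-case chain-length argument. This approach sidesteps discrete Morse theory entirely, and in fact recovers Kahle's undirected result \cite{Kah} by the same argument applied to the reflexive digraph $\hat{G}$.
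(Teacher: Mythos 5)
Your proof is correct and matches the paper's approach: both use the identical Galois closure operator $\nu(X)=\outN(\inN(X))$ on the face poset, the closure-map retraction principle from Section \ref{sec:collapses}, and a bound on chain lengths in the image poset $\nu(P)$. Your two-case bookkeeping via the threshold index $i^*$ where $|\sigma_i|$ first reaches $n$ is just a more explicit account of the same counting the paper carries out by inspecting the index $m$ in a hypothetical long chain and reading off a forbidden $\overrightarrow{K}_{m,n}$.
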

    \begin{proof}
    Our argument follows the strategy of Kahle's proof for the undirected case \cite[Lemma 2.3]{Kah}.
    Let $P$ be the face poset of $\N(G)$ and for $X \in P$ let $\outN_G(X)=\bigcap\limits_{x\in X} \outN_G(x)$ and $\inN_G(X)=\bigcap\limits_{x\in X} \inN_G(x)$. 
    Define $\nu : P \rightarrow P$ by $\nu(X)=\outN_G(\inN_G(X))$. 
    Clearly, $\nu$ is an order preserving poset map. Also, it is easy to see that for any $X \in P$ we have $X \subseteq \nu(X)$ and $\nu^2(X)=\nu(X)$. Thus, from \cite[Corollary 10.12]{Bjo95} and the discussion below that, we get that $\nu$ induces a strong homotopy equivalence between ${\mathcal O}(P)$ and ${\mathcal O}(\nu(P))$. 
    
    Now let $r=m+n-3$ and suppose $X_{r+2}\subsetneq X_{r+1}\subsetneq \dots \subsetneq X_{1}$ is a face of ${\mathcal O}(\nu(P))$ of dimension $r+1$. For $i \in [r+2]$, let $Y_i=\inN_G(X_i)$. By definition we have $X_i=\nu(Z_i)$ for some $Z_i\in P$. Thus \[\outN_G(Y_i)=\outN_G(\inN_G(X_i))=\nu(X_i)=\nu^2(Z_i)=\nu(Z_i)=X_i,\]
    \noindent
    implying that $Y_{r+2} \supsetneq Y_{r+1} \supsetneq \dots \supsetneq Y_{1}$. Since $Y_1\neq \emptyset$, we have that $Y_m$ contains at least $m$ vertices of $G$ and $X_m$ contains at least $r+3-m\geq n$ vertices. If $m\leq n$, then $Y_n$ and $X_n$ span a subgraph of $G$ which contains a copy of $\overrightarrow{K}_{m,n}$. Otherwise $Y_m$ and $X_m$ span a subgraph of $G$ which contains a copy of $\overrightarrow{K}_{m,n}$, a contradiction to our assumption on $G$.  We conclude that the dimension of ${\mathcal O}(\nu(P))$ is at most $r$.
    \end{proof}
    
 In the case of undirected graphs, the fact that bipartite subgraphs of a graph $G$ influences the topology of the neighborhood complex ${\mathcal N}(G)$ allowed Kahle \cite{Kah} to study the behavior of random neighborhood complexes.  Our Proposition \ref{prop:bipartite} suggests that one can study $\N(G)$ of random digraphs in a similar way.

 We note that Proposition \ref{prop:bipartite} also of course holds for the in-neighborhood complex $\Nin(G)$.  As an application of these results we obtain a vanishing theorem for homology of neighborhood complexes.  In what follows we say that a digraph $G$ is \defi{simple} if there is at most one directed edge between any pair of vertices (with loops allowed).
 
 \begin{thm}\label{thm:tophomology}
 Suppose $G$ is a simple digraph on at most $m = 2n+2$ vertices.  Then $\tilde{H}_i(\N(G)) = 0$ for all $i \geq n$.
 \end{thm}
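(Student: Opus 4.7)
The plan is to prove this by induction on $m = |V(G)|$; the base case $m \leq 2$ is immediate since then $\N(G)$ is at most one-dimensional and either connected or empty. For the inductive step I will apply a Mayer--Vietoris decomposition $\N(G) = D \cup \mathrm{star}_{\N(G)}(v)$ at a carefully chosen vertex $v \in V(\N(G))$, with intersection $L := \lk_{\N(G)}(v)$. Because the star is contractible the long exact sequence reduces to the implication that $\tilde{H}_i(D) = 0$ together with $\tilde{H}_{i-1}(L) = 0$ gives $\tilde{H}_i(\N(G)) = 0$.

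The vertex $v$ will be chosen together with a second vertex $u \in V(\N(G))$ satisfying $\{u,v\} \notin \N(G)$, equivalently $\inN_G(u) \cap \inN_G(v) = \emptyset$. Such a pair exists whenever the $1$-skeleton of $\N(G)$ is not complete on $V(\N(G))$, and the exceptional (complete) case will be handled separately. With $u,v$ in hand, the link $L$ is isomorphic to $\N(G_v)$ for a digraph $G_v$ on at most $m-2$ vertices (we may excise $u$, which does not appear in $V(L)$), while the deletion $D$ is, after handling the potential extra facet coming from $\outN_G(v)$, the neighborhood complex of a digraph on at most $m-1$ vertices. Applying the inductive hypothesis then gives $\tilde{H}_i(D) = 0$ and $\tilde{H}_{i-1}(L) = 0$ in the required ranges; for $m = 2n+2$ both bounds meet at $i \geq n$, closing the induction. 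The Leray property is automatic from the main vanishing statement, since any induced subcomplex of $\N(G)$ is of the form $\N(H)$ for a simple digraph $H$ on at most $m$ vertices.

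The main obstacle will be the exceptional case where the $1$-skeleton of $\N(G)$ is complete, that is, when every pair of vertices in $V(\N(G))$ shares a common in-neighbor in $G$. Here I plan to use the simplicity hypothesis together with the bound $|V(G)| \leq 2n+2$ to force the cover of $\N(G)$ by out-neighborhoods $\{\outN_G(w) : w \in V(G)\}$ to admit a nonempty common intersection, producing a cone structure on $\N(G)$ and hence contractibility; Proposition \ref{prop:bipartite} should provide auxiliary dimension bounds along the way. A secondary technical point is the identification of the deletion $D$ with $\N$ of a digraph on $m-1$ vertices, since the naive candidate $G - v$ may lose the facet $\outN_G(v) \setminus \{v\}$; this should be handled either by restricting to a choice of $v$ whose effective out-neighborhood is dominated by that of another vertex, or by a small auxiliary construction reabsorbing the lost facet. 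Establishing the Helly-type property in the complete-1-skeleton regime is where I expect the principal effort to lie.
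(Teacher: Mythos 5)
Your plan is structurally quite different from the paper's, which argues globally rather than locally: it isolates the set $[t]$ of vertices with out-degree at most $n$, removes their out-edges (this only discards facets on at most $n$ vertices, harmless for $\tilde{H}_i$ with $i \geq n$), passes to the in-neighborhood complex via Proposition~\ref{prop:outin}, removes the small in-neighborhoods of the remaining vertices in the same way, and reduces to a digraph $G_2$ all of whose edges run from $\{t+1,\dots,m\}$ to $[t]$, after which the bipartite dimension bound of Proposition~\ref{prop:bipartite} (or a direct dimension count) finishes. You instead propose induction on $|V(G)|$ with a Mayer--Vietoris decomposition at a vertex. The fatal gap is in your exceptional branch. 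You claim that if every pair of vertices of $\N(G)$ shares a common in-neighbor, then simplicity and the bound $|V(G)| \leq 2n+2$ force the out-neighborhoods to have a common element, so that $\N(G)$ is a cone and hence contractible. This is false. Take $m = 6 = 2\cdot 2 + 2$ (so $n=2$), $V(G)=\{1,\dots,6\}$, $\outN_G(4)=\{1,2\}$, $\outN_G(5)=\{2,3\}$, $\outN_G(6)=\{1,3\}$, and no other edges. This $G$ is simple, $V(\N(G))=\{1,2,3\}$ has complete $1$-skeleton (every pair of vertices shares an in-neighbor), yet $\N(G)=\partial\Delta^2 \simeq {\mathbb S}^1$ is not a cone, and the three nonempty out-neighborhoods have empty common intersection. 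The theorem's conclusion ($\tilde{H}_i=0$ for $i\geq 2$) holds here, but not by contractibility, so the Helly-type statement you intend to prove is simply not available, and your induction has no step for exactly this situation.

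The secondary points you flag also carry real content. To apply the inductive hypothesis to the link $L=\lk_{\N(G)}(v)$ you need a \emph{simple} digraph on at most $m-2$ vertices whose out-neighborhood complex is $L$. The natural sources for the facets of $L$ are the in-neighbors $w\in\inN_G(v)$, and when $u\in\inN_G(v)$ the facet $\outN_G(u)\setminus\{v\}$ loses its source once $u$ is excised; this is not automatic to repair. Likewise, the deletion $D$ equals $\N(G-v)$ together with the extra facet $\outN_G(v)$, and absorbing that facet preserves $\tilde{H}_i$ for $i\geq n$ only when $|\outN_G(v)|\leq n$; imposing this degree bound on $v$ while simultaneously insisting that $v$ has a non-neighbor $u$ in $\N(G)$ is an extra constraint that need not be satisfiable. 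As written the proposal does not close, and repairing the exceptional case would require an argument of a different nature than the Helly claim.
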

 \begin{proof}
First note that since $\binom{m}{2}< m(n+1)$, $G$ has a vertex of outdegree less than $n+1$. Without loss of generality, let $V(G)=[m]$ (if $|V(G)| < m$ we add the appropriate number of isolated vertices) and let $\{1,\dots,t\}$ denote the vertices of outdegree less than $n+1$. This implies that $\outdeg_G(j)>n$, so that $\indeg_G(j)\leq n$ for all $j \in \{t+1,\dots,m\}$.
 
 Let $G_1$ be the digraph obtained from $G$ by removing all out-edges from vertices $1,2\dots,t$. Clearly,
 \[\N(G)=\N(G_1)\cup  \Delta^{\outN_G(1)} \cup \dots \cup  \Delta^{\outN_G(t)}.\]
 Recall that $\Delta^{S}$ denotes a simplex on vertex set $S$.  Since $|\outN_G(j)|\leq n$ for all $j \in [t]$,
 \begin{equation*}
     \tilde{H}_i(\N(G))\cong \tilde{H}_i(\N(G_1)), \hspace*{0.2cm} \forall i \geq n.
 \end{equation*}
 From Proposition \ref{prop:outin}, we know that $\N(G_1)\simeq \Nin(G_1)$. Therefore, 
 \begin{equation}\label{eq:removeoutsmallsets}
     \tilde{H}_i(\N(G))\cong \tilde{H}_i(\Nin(G_1)), \hspace*{0.2cm} \forall i \geq n.
 \end{equation}
 
 Now let $G_2$ be the digraph obtained from $G_1$ by removing all in-edges from vertices $t+1,t+2,\dots,m$. It is again clear that 
 \[\Nin(G_1)=\Nin(G_2)\cup  \Delta^{\inN_{G_1}(t+1)} \cup \dots \cup  \Delta^{\inN_{G_1}(m)}.\]
 Since $\indeg_G(j)\leq n$, we have  $\indeg_{G_1}(j)\leq n$ for all $j \in \{t+1,\dots,m\}$ and hence
 \begin{equation}\label{eq:removeinsmallsets}
     \tilde{H}_i(\Nin(G_1))\cong \tilde{H}_i(\Nin(G_2))\cong \tilde{H}_i(\N(G_2)), \hspace*{0.2cm} \forall i \geq n.
 \end{equation}
 
 From Equations \eqref{eq:removeoutsmallsets} and \eqref{eq:removeinsmallsets}, to prove our result, it is enough to show that $\tilde{H}_i(\N(G_2))=0$ for all $ i \geq n.$ In fact we will show that the complex $\N(G_2)$ admits a strong deformation retract onto a complex of dimension at most $n-1$.
 
 Observe that, in $G_2$, all the edges are from the set $\{t+1,\dots,m\}$ to the set $\{1,\dots,t\}$. Therefore, if $t\leq n$ then the complex $\N(G_2)$ is of dimension at most $n-1$. If $t>n+1$, then $G_2$ does not contain a copy of $\overrightarrow{K}_{n+1,1}$ and therefore the result follows from Proposition \ref{prop:bipartite}. Now let $t=n+1$. Then, either $G_2$ does not contain a copy of $\overrightarrow{K}_{1,n+1}$ or $\N(G_2)$ is a simplex on vertex set $[t]$. In the first case the result again follows from Proposition \ref{prop:bipartite} and the second case is trivial. The result follows.
 \end{proof}
 
 Recall that a simplicial complex $X$ is \defi{$n$-Leray} if $\tilde{H}_i(Y) = 0$ for all induced subcomplexes $Y \subset X$ and $i \geq n$. Equivalently $X$ is $n$-Leray if $\tilde{H}_i(\lk_{X}(\sigma)) = 0$ for all $i \geq n$ and for any face $\sigma \in X$.  As a corollary to Theorem \ref{thm:tophomology} we get the following.
 
\begin{cor}\label{cor:Leray}
Suppose $G$ is a simple digraph on at most $m = 2n+2$ vertices.  Then the complex $\N(G)$ is $n$-Leray.
\end{cor}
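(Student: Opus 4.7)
The plan is to reduce the $n$-Leray property to the vanishing statement of Theorem \ref{thm:tophomology} by realizing every induced subcomplex of $\N(G)$ as the out-neighborhood complex of another simple directed graph on the same vertex set. Recall the equivalent formulation given in the excerpt: $X$ is $n$-Leray provided $\tilde H_i(Y) = 0$ for every induced subcomplex $Y \subseteq X$ and every $i \geq n$.

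First, fix a subset $S \subseteq V(\N(G))$, and let $Y = \N(G)[S]$ denote the corresponding induced subcomplex, whose faces are exactly those $T \in \N(G)$ with $T \subseteq S$. I would then construct an auxiliary directed graph $G_S$ on vertex set $V(G_S) = V(G)$ with edge set
\[E(G_S) = \{(u,v) \in E(G) : v \in S\},\]
so that $G_S$ is obtained from $G$ by deleting every edge that does not terminate in $S$. Clearly $G_S$ is simple and $|V(G_S)| = |V(G)| \leq 2n+2$, so it satisfies the hypotheses of Theorem \ref{thm:tophomology}.

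The next step is to check the identity $\N(G_S) = Y$ directly from the definition of the out-neighborhood complex. Since $\outN_{G_S}(u) = \outN_G(u) \cap S$ for every $u \in V(G)$, the facets of $\N(G_S)$ are precisely the nonempty sets of this form; and since a vertex $v$ has positive indegree in $G_S$ iff $v \in S$ and $\mathrm{indeg}_G(v) > 0$, the vertex set of $\N(G_S)$ equals $S$. On the other hand a subset $T \subseteq S$ is a face of $Y$ iff $T \subseteq \outN_G(v)$ for some $v$, which is equivalent to $T \subseteq \outN_G(v) \cap S = \outN_{G_S}(v)$, i.e.\ $T \in \N(G_S)$. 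Hence $\N(G_S) = Y$.

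Finally, applying Theorem \ref{thm:tophomology} to $G_S$ yields $\tilde H_i(\N(G_S)) = 0$ for all $i \geq n$, and via the identification above $\tilde H_i(Y) = 0$ for all $i \geq n$. Since $S$ was arbitrary, this establishes $n$-Lerayness of $\N(G)$. I do not anticipate a real obstacle here: the argument is essentially the observation that the family of directed neighborhood complexes on a fixed vertex set is closed under taking induced subcomplexes, together with the vanishing result already in hand. The only care needed is to keep track of which vertices appear in $V(\N(G_S))$ so that the equality with the induced subcomplex is genuine and not just up to irrelevant isolated vertices.
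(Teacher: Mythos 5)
Your proof is correct, and it is a close variant of the paper's own argument. The paper records two equivalent characterizations of the $n$-Leray property — vanishing of $\tilde H_i$ on all induced subcomplexes, and vanishing on all links of faces — and then proves the corollary via the link characterization: given a face $\sigma$, it builds an auxiliary digraph $G_\sigma$ on $[m]$ whose edges $ij$ are exactly those with $\sigma \subseteq \outN_G(i)$ and $j \in \outN_G(i)\setminus\sigma$, checks that $\N(G_\sigma) = \lk_{\N(G)}(\sigma)$, and invokes Theorem \ref{thm:tophomology}. You instead use the induced-subcomplex characterization: given $S \subseteq V(\N(G))$, delete from $G$ every edge whose head lies outside $S$ to get $G_S$, check $\N(G_S) = \N(G)[S]$, and invoke the same theorem. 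The common core is the observation that the desired subcomplex (link or induced) is again an out-neighborhood complex of a simple digraph on the same (at most $2n+2$) vertices. Your verification that $V(\N(G_S)) = S$ is the small point requiring care, and you handle it correctly since $S \subseteq V(\N(G))$ guarantees positive indegree. Neither route is materially simpler or more general than the other; they are two instances of the same reduction.
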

\begin{proof}
Without loss of generality assume $V(G) = [m]$ (once again if $|V(G)| < m$ we add sufficiently many isolated vertices).  Let $\sigma$ be a face of $\N(G)$ and $\{x_1,\dots,x_t\}$ be the set of vertices of $G$ such that $\sigma\subseteq \outN_{G}(x_i)$ for all $i \in [t]$. Define a directed graph $G_\sigma$ with $V(G_\sigma)=[m]$ and $ij\in E(G_\sigma)$ if and only if $i\in \{x_1,\dots,x_t\}$ and $j \in \outN_{G}(i)\setminus \sigma$. It is easy to observe that $\lk_{\N(G)}(\sigma)=\N(G_\sigma)$. Using Theorem \ref{thm:tophomology}, we get that $\tilde{H}_i(\lk_{\N(G)}(\sigma)) = 0$ for all $i \geq n$ and hence the result follows.
\end{proof}

\section{Tournaments and reconfiguration}\label{sec:tourn}

We let $K_n$ denote the complete (undirected) graph with vertex set $[n] = \{1,2, \dots, n\}$.  Recall that a \defi{tournament} is some choice of orientations on the edges of $K_n$. In this context we will use $T_n$ to denote an arbitrary tournament.   We let $\tourn$ denote the \defi{standard $n$-tournament}, the directed graph with vertex set $[n]$ and with edges $(i,j)$ for all $i \rightarrow j$ if $i < j$.  Up to isomorphism this is the unique \emph{acyclic} tournament on $n$ vertices, and is also sometimes called the \defi{transitive} tournament.

The study of tournaments has a long history in combinatorics.  Many of the important properties of tournaments were first established by Landau \cite{Lan} in his study of dominance relations in flocks of chickens.  More recently tournaments have found applications in voting theory \cite{Las97} and social choice theory \cite{Brandt16}.  A primary statistic of a tournament is its \emph{outdegree sequence} $(c_1, c_2, \dots, c_n)$, the nonincreasing sequence of outdegrees of the vertices of $T_n$. A well-known theorem of Landau characterizes which sequences arise as outdegree sequences of tournaments.

In this section we study the connectivity and more general topology of the complexes $\Hom(G,T_n)$ for various tournaments $T_n$ and digraphs $G$.   We first consider the case $G=K_2$, where $\Hom(G,T_n)$ is recovered up to homotopy type by $\N(T_n) \simeq \Nin(T_n)$.  The goal here is to understand topological properties of the neighborhood complexes in terms of combinatorial properties of the underlying tournament $T_n$.  

First note that, if $T_n = \tourn$ is transitive, then $\outN_{T_n}(1) =\{2, \dots, n\}$ so that neighborhood complex $\N(\tourn)$ is a simplex, and hence contractible. If $T_n$ is not transitive, it is not so clear what (if any) restriction there might be on the topology of $\N(T_n)$.

In Table \ref{table:tourn} we have listed all isomorphism types of tournaments on 5 vertices, along with their outdegree sequence and the ranks of the homology of their neighborhood complex.   Note that two tournaments can have the same outdegree sequence but have neighborhood complexes with different homotopy types. Also note that from Theorem \ref{thm:tophomology} we cannot have nontrivial homology in rank 2 or above.

\renewcommand{\arraystretch}{1.5}
\begin{table}[H]
\begin{tabular}{|c|c|c|}
\hline
Adjacency (out neighbors) in $T_5$ & Outdegree sequence & $\tilde{H}_i(\N(T_5))$ \\ \hline
1: [], 2: [1], 3: [1, 2], 4: [1, 2, 3], 5: [1, 2, 3, 4] & (4, 3, 2, 1, 0) & {0: 0, 1: 0, 2: 0, 3: 0} \\ \hline
1: [3], 2: [1], 3: [2], 4: [1, 2, 3], 5: [1, 2, 3, 4] &  (4, 3, 1, 1, 1) & {0: 0, 1: 0, 2: 0, 3: 0} \\ \hline
1: [], 2: [1, 4], 3: [1, 2], 4: [1, 3], 5: [1, 2, 3, 4]
 &  (4, 2, 2, 2, 0) & {0: 0, 1: 0, 2: 0, 3: 0} \\ \hline
 1: [4], 2: [1], 3: [1, 2], 4: [2, 3], 5: [1, 2, 3, 4] & (4, 2, 2, 1, 1)  & {0: 0, 1: 0, 2: 0, 3: 0} \\ \hline
2: [], 2: [1], 3: [1, 2, 5], 4: [1, 2, 3], 5: [1, 2, 4] & (3, 3, 3, 1, 0) & {0: 0, 1: 0, 2: 0} \\ \hline

1: [], 2: [1, 4, 5], 3: [1, 2], 4: [1, 3], 5: [1, 3, 4] & (3, 3, 2, 2, 0) & {0: 0, 1: 0, 2: 0}  \\ \hline

1: [4], 2: [1], 3: [1, 2], 4: [2, 3, 5], 5: [1, 2, 3] &  (3, 3, 2, 1, 1) & {0: $\Z$ , 1: 0, 2: 0}  \\ \hline
1: [4], 2: [1], 3: [1, 2, 5], 4: [2, 3], 5: [1, 2, 4] & (3, 3, 2, 1, 1) & {0: 0, 1: 0, 2: 0}   \\ \hline

1: [4, 5], 2: [1], 3: [1, 2], 4: [2, 3], 5: [2, 3, 4] & (3, 2, 2, 2, 1) & {0: 0, 1: 0, 2: 0} \\ \hline

1: [4], 2: [1, 5], 3: [1, 2], 4: [2, 3], 5: [1, 3, 4] & (3, 2, 2, 2, 1)  & {0: 0, 1: $\Z$, 2: 0}  \\ \hline

1: [5], 2: [1, 4], 3: [1, 2], 4: [1, 3], 5: [2, 3, 4] & (3, 2, 2, 2, 1) & {0: $\Z$, 1: $\Z \times \Z$, 2: 0}  \\ \hline

1: [4, 5], 2: [1, 5], 3: [1, 2], 4: [2, 3], 5: [3, 4] & (2, 2, 2, 2, 2)  & {0: 0, 1: $\Z$}  \\ \hline
\end{tabular}
\medskip
\caption{A list of all tournaments $T_5$ on 5 vertices, with their outdegree sequence and homology groups of $\N(T_5)$. These computations were done using Sage \cite{Sage}. }
\label{table:tourn}
\end{table}

One can check that the third to last entry $T_5$ in Table \ref{table:tourn} in fact satisfies $\N(T_5) \simeq {\mathbb S}^1$.  Our next result says that any $n$-sphere, for $n\geq 1$, can be realized (up to homotopy type) as $\N(T_{2n+3})$ for some choice of tournament $T_{2n+3}$. 

\begin{prop}\label{prop:spheres}
Suppose $n\geq 1$ is an integer. Then there exists a tournament $T_{2n+3}$ on $2n+3$ vertices such that $\N(T_{2n+3})$ collapses onto the boundary of a simplex of dimension $n+1$. In particular, $\N(T_{2n+3})\simeq \mathbb{S}^n$. 
\end{prop}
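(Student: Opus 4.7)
My plan is to construct an explicit tournament $T_{2n+3}$ and exhibit an acyclic Morse matching on the face poset of $\N(T_{2n+3})$ whose critical cells form exactly the face poset of $\partial\Delta^{n+1}$; applying Proposition~\ref{prop:dmtmain} then produces the desired collapse $\N(T_{2n+3}) \Searrow \partial\Delta^{n+1} \cong \mathbb{S}^n$.

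For the construction, let $V(T_{2n+3}) = \{s_0,\ldots,s_{n+1}\} \sqcup \{w_1,\ldots,w_{n+1}\}$ and take edges $s_i \to s_j$ for $0 \leq i < j \leq n+1$, $w_i \to w_j$ for $1 \leq i < j \leq n+1$, $s_i \to w_i$ for $1 \leq i \leq n+1$, and $w_i \to s_j$ for every $j \in \{0,\ldots,n+1\} \setminus \{i\}$. This is a tournament on $2n+3$ vertices, and writing $F_k = \{s_0,\ldots,s_{n+1}\} \setminus \{s_k\}$, a direct computation gives $\outN(s_0) = F_0$, $\outN(s_i) = \{s_{i+1},\ldots,s_{n+1},w_i\}$ for $i \geq 1$, and $\outN(w_i) = F_i \cup \{w_{i+1},\ldots,w_{n+1}\}$. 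Since $\outN(s_i) \subseteq \outN(w_1)$ for every $i \geq 2$, the facets of $\N(T_{2n+3})$ are precisely $\outN(s_0)$, $\outN(s_1)$, and $\outN(w_1),\ldots,\outN(w_{n+1})$; in particular $w_1$ appears only in the facet $\outN(s_1)$, which contains no $s_0$.

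For the matching, given a face $\sigma$ of $\N(T_{2n+3})$ containing at least one $w_i$, let $i^{\ast}(\sigma)$ be the largest such index and set $u(\sigma) = s_0$ if $i^{\ast}(\sigma) \geq 2$ and $u(\sigma) = s_2$ if $i^{\ast}(\sigma) = 1$. Pair $\sigma$ with $\sigma \bigtriangleup \{u(\sigma)\}$; this is a face of the complex, because every facet containing some $w_i$ with $i \geq 2$ also contains $s_0$, while the unique facet containing $w_1$ contains $s_2$. The unmatched (critical) cells are then exactly the faces of $\N(T_{2n+3})$ lying in $\{s_0,\ldots,s_{n+1}\}$, and a quick inspection of the facets shows these are precisely the proper subsets of $\{s_0,\ldots,s_{n+1}\}$, i.e.\ the face poset of $\partial\Delta^{n+1}$.

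The main remaining step, which I expect to be the chief obstacle, is verifying acyclicity of the matching. I would argue by case analysis on $u(\sigma_k)$: if $u(\sigma_k) = s_0$, then the successor $\sigma_{k+1}$ of any hypothetical $V$-path contains $s_0$, and the constraint that $\sigma_k$ must lie in some $\outN(w_j)$ with $j \geq 1$ (so contains no $w_1$) forces $\sigma_{k+1}$ to again have $u = s_0$, making $\sigma_{k+1}$ the upper half of its matched pair rather than the lower; the case $u(\sigma_k) = s_2$ is analogous since $\sigma_{k+1} \subseteq \outN(s_1)$ still contains $s_2$ and can include at most the single $w$-vertex $w_1$. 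In neither case can the Morse path extend, so the matching is acyclic and Proposition~\ref{prop:dmtmain} delivers the conclusion.
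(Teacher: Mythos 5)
Your proof is correct, and it takes a genuinely different route from the paper's. The paper writes the tournament on vertex set $[2n+3]$ via explicit formulas for the out-neighborhoods (with a separate ad hoc argument for $n=1$), and then exhibits an explicit sequence of elementary collapses, specifying free pairs one at a time in a somewhat intricate order. You instead split the vertex set into a block $\{s_0,\dots,s_{n+1}\}$ that will become $\partial\Delta^{n+1}$ and a block $\{w_1,\dots,w_{n+1}\}$, which makes the facet structure transparent, and then apply Proposition~\ref{prop:dmtmain} with a one-line matching rule (toggle $s_0$ when $i^*\geq 2$, toggle $s_2$ when $i^*=1$). Your tournament is, as far as I can tell, isomorphic to the paper's (via $i\mapsto s_{n+2-i}$ for $i\leq n+2$ and $i\mapsto w_{2n+4-i}$ for $i\geq n+3$), so the difference is in the argument rather than in the object, but the Morse-matching presentation is cleaner: it covers $n=1$ uniformly, the critical cells are identified in one stroke, and acyclicity reduces to the observation that any $V$-path has length at most one. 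Concretely, if $\alpha_0\prec\beta_0=\alpha_0\cup\{u\}\succ\alpha_1$ with $\alpha_1\neq\alpha_0$, then $u\in\alpha_1$; the $w$-vertices of $\alpha_1$ lie among those of $\alpha_0$, and the dichotomy (no $w_1$ when $i^*\geq 2$, only $w_1$ when $i^*=1$, since the two kinds of $w$-vertices lie in disjoint sets of facets) forces $u(\alpha_1)=u\in\alpha_1$ whenever $\alpha_1$ has a $w$-vertex at all, so $\alpha_1$ is either critical or an upper element of a matched pair, and the $V$-path terminates. This is the step you flagged as the chief obstacle, and your sketch of it is sound; I would only tighten the phrasing to make explicit that if $\alpha_1$ contains no $w$-vertex it is critical and the path also stops, and that the argument precludes $V$-paths of length $\geq 2$ rather than merely cycles — a slightly stronger and simpler statement than acyclicity alone.
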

\begin{proof}
We will describe the graphs $T_{2n+3}$ explicitly. For this let $V(T_{2n+3})=[2n+3]=\{1,2,\dots,2n+3\}$. For $n=1$, define the edges of $T_5$ according to out-neighborhoods as follows: $\outN_{T_5}(5)=\{1,3,4\},~\outN_{T_5}(4)=\{2,3\},~\outN_{T_5}(3)=\{1,2\},~\outN_{T_5}(2)=\{1,5\},~\outN_{T_5}(1)=\{4\}$. Observe that $\N(T_5)$ collapses onto the boundary of the $2$-simplex $\{1,2,3\}$ (this can be done by collapsing the free pairs $(\{5\},\{1,5\})$ and $(\{4\},\{1,3,4\})$). 

For $n\geq 2$, we define the edges of $T_{2n+3}$ as follows.
\begin{equation*}
\outN_{T_{2n+3}}(i) = 
\begin{cases}
[i-1]\setminus \{ i-n-2 \}, & \text{ if } i \in \{ n+3,n+4 ,\dots, 2n+3\},\\
[i-1], & \text{ if } i =n+2,\\
[i-1]\sqcup \{n+2+i\}, & \text{ if } i \in \{2,3,\dots, n+1\},\\
\{n+3\}, &  \text{ if } i=1.\\
\end{cases}
\end{equation*}
Recall that we use $\langle \alpha_1,\dots,\alpha_t \rangle$ to denote the simplicial complex with facets $\alpha_1, \dots, \alpha_t$. Observe that in the tournament $T_{2n+3}$ defined above, we have $\outN_{T_{2n+3}}(i)\subset \outN_{T_{2n+3}}(2n+3)$ for each $i \in [n]$. This implies that $\N(T_{2n+3})=\langle \outN_{T_{2n+3}}(i) : i \in \{n+1,\dots, 2n+3\}\rangle$.

Our strategy now will be to collapse each $\outN_{T_{2n+3}}(i)$, for $i \geq n$, down to a distinct $n$-dimensional face with vertices among the set $[n+2]$. Since $(\{2n+3\},\outN_{T_{2n+3}}(n+1))$ is a free pair in $\N(T_{2n+3})$ and $\outN_{T_{2n+3}}(n+1)\setminus\{2n+3\}=[n] \subset \outN_{T_{2n+3}}(2n+3)$, we get that $\N(T_{2n+3})\Searrow X_1 := \langle \outN_{T_{2n+3}}(i) : i \in \{n+2,\dots, 2n+3\}\rangle$.
%From the inductive orientation of $T_{2n+3}$, it is easy to see that $\outN_{T_{2n+3}}(i)=[i-1]\setminus \{ i-n-2 \}$ for each $i \in \{n+3,\dots, 2n+3\}$ and $\outN_{T_{2n+3}}(n+2)=[n+1]$.
Next we see that  $(\{3,\dots,n+1,n+3\},\outN_{T_{2n+3}}(n+4))$ is a free pair in $X_1$, which implies that $X_1\Searrow X_2 := \langle \outN_{T_{2n+3}}(i) : i \in \{n+5,\dots, 2n+3\} \rangle \cup \langle [n+1], [n+2]\setminus \{1\}, [n+2]\setminus \{2\} \rangle$. 

For $n>2$, we now collapse the facets $\outN_{T_{2n+3}}(i)$ in $X_2$ for $i \in \{n+5,\dots,2n+2\}$ in order of increasing $i$, using the following sequence of free pairs:
\begin{equation*}
 \begin{split}
 & \big(\{i-n-1,i-n,\dots,n+1,n+3\},\outN_{T_{2n+3}}(i)\big),\\
     & \Big{\{}\big(\{i-n-1,i-n,\dots,n+1,j\},\outN_{T_{2n+3}}(i)\setminus \{n+3,\dots,j-1\}\big): n+4 \leq j \leq i-1  \Big{\}}. 
 \end{split}
\end{equation*}
\noindent
For $i \in \{n+5,\dots,2n+2\}$, after collapsing $\outN_{T_{2n+3}}(i)$, we get that $ \N(T_{2n+3}) \Searrow \langle \outN_{T_{2n+3}}(j) : j \in \{i+1,\dots, 2n+3\} \rangle \cup \langle [n+1]\rangle \cup \langle [n+2]\setminus \{j-n-2\}: j \in \{n+3,\dots,i\} \rangle$. Therefore, 
$$\N(T_{2n+3}) \Searrow \langle \outN_{T_{2n+3}}(2n+3) \rangle \cup \langle [n+1]\rangle \cup \langle [n+2]\setminus \{j-n-2\}: j \in \{n+3,\dots,2n+2\} \rangle.$$

Finally (or if $n=2$), we collapse $\outN_{T_{2n+3}}(2n+3)$ using the following sequence of free pairs:
$$\big(\{n+3\},\outN_{T_{2n+3}}(2n+3)\big),~ \Big{\{}\big(\{j\},\outN_{T_{2n+3}}(2n+3)\setminus \{n+3,\dots,j-1\}\big): n+4 \leq j \leq 2n+2 \Big{\}}.$$
Thus we have
\begin{equation*}
    \begin{split}
        \N(T_{2n+3})  \Searrow & \langle [n+1]\rangle \cup \langle [n+2]\setminus \{j-n-2\}: j \in \{n+3,\dots,2n+3\} \rangle \\
        & = \langle [n+2]\setminus \{i\}: i \in [n+2]\rangle = \partial (\Delta^{[n+2]}). 
    \end{split}
\end{equation*}
\end{proof}

Proposition \ref{prop:spheres} demonstrates that there exist tournaments on $2n+3$ vertices whose out neighborhood complex has nonzero homology in dimension $n$.  Note that by Theorem \ref{thm:tophomology} this is the minimum number of vertices of a digraph that could achieve this, since $\N(T_m)$ is $n$-Leray if $m \leq 2n+2$.

\subsection{Reconfiguration for tournaments}\label{sec:reconfig}

We next consider the notion of \emph{reconfiguration} in the context of homomorphisms of digraphs.  As discussed in Section \ref{sec:intro} the concept of reconfigurations of colorings of undirected graphs has been extensively studied.  Here for an $n$-colorable graph $G$, we let $C_n(G)$ denote the \defi{$n$-color graph of $G$}, by definition the graph with node set given by the proper $n$-colorings of $G$, with two adjacent whenever the corresponding colorings differ on precisely one vertex of $G$.  One can see that $C_n(G)$ is equivalent to the one-skeleton of $\oldHom(G,K_n)$. The question of connectivity of $C_n(G)$ (and hence of $\oldHom(G,K_n)$) is of interest, as it describes how one can move from one coloring of $G$ to another by local moves involving changing the color at one vertex.  As discussed in Section \ref{sec:intro} it also relevant in the context of sampling from colorings of a graph.

In \cite{CHJconnected} Cereceda, van den Heuvel, and Johnson show that if an undirected graph $G$ has chromatic number $\chi(G) = n$ for $n \leq 3$, then $C_n(G)$ is always disconnected.  Other the hand, if $n > 3$ there exist graphs $G$ with chromatic number $n$ for which $C_n(G)$ is connected, and also $n$-chromatic graphs $H$ for which $C_n(H)$ is disconnected. In \cite{CHJmixing} the same group of authors characterize the bipartite graphs $G$ for which $\oldHom(G, K_3)$ is connected. They further show that the problem of deciding the connectedness of $\oldHom(G,K_3)$ for $G$ a bipartite graph is coNP-complete, but that restricted to planar bipartite graphs, the question is answerable in polynomial time.

We now apply these ideas to digraphs, using the $1$-skeleton of the $\Hom$ complex to define our notion of reconfiguration. Although the concept makes sense for homomorphisms $G \rightarrow H$ of any digraphs $G$ and $H$ we will restrict to the case that $H = T_n$ is a tournament.  This is motivated by the notion of $n$-colorings of graphs and digraphs. For instance, if $G$ is a digraph one defines the \defi{oriented chromatic number} $\ch(G)$ as the minimum
$n$ for which there exists a homomorphism $f:G \rightarrow T_n$ for some tournament on $n$ vertices. We refer to \cite{KosSopZhu} for more details regarding oriented chromatic number.  Here we are interested in connectivity properties of $\Hom(G,T_n)$ for various choices of tournaments $T_n$.  Our first result says the situation is very well understood for the case that $T_n = \tourn$ is transitive.

\begin{thm}\label{thm:tourncollapse}
Suppose $G$ is a digraph, and let $\overrightarrow{K_n}$ denote the transitive tournament on $n$ vertices.  Then $\Hom(G,\overrightarrow{K_n})$ is either empty or collapsible.
\end{thm}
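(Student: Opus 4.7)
The plan is to apply discrete Morse theory (Proposition~\ref{prop:dmtmain}) directly to the face poset $P(G,\tourn)$, constructing an acyclic partial matching whose unique critical cell is a single vertex. If $\Hom(G,\tourn)$ is empty there is nothing to prove, so I assume a homomorphism $G\to\tourn$ exists; this forces $G$ to be acyclic, so I can fix a topological ordering $v_1,\dots,v_m$ of $V(G)$ with $(v_i,v_j)\in E(G)\Rightarrow i<j$.

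To each multihomomorphism $\alpha$ and index $i$ I associate the \emph{ceiling}
\[\mu_i(\alpha)=\min\bigl(\{n\}\cup\{\min\alpha(v_j)-1:(v_i,v_j)\in E(G)\}\bigr),\]
the largest element of $[n]$ that can be adjoined to $\alpha(v_i)$ without violating any out-edge constraint. Two features drive the construction: (i) $\mu_i(\alpha)$ depends only on the restriction of $\alpha$ to $\{v_j:j>i\}$, and (ii) adjoining $\mu_i(\alpha)$ to $\alpha(v_i)$ never violates an in-edge constraint, since for $u\in\inN_G(v_i)$ we have $\max\alpha(u)<\min\alpha(v_i)\leq\mu_i(\alpha)$. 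I will call $i$ \emph{toggleable at $\alpha$} when $\alpha(v_i)\neq\{\mu_i(\alpha)\}$, and pair $\alpha$ with the multihomomorphism obtained by symmetrically toggling $\mu_i(\alpha)$ in the $v_i$-coordinate, where $i$ is taken to be the \emph{largest} toggleable index of $\alpha$. Property (i) makes the matching involutive, since toggling at $v_i$ preserves both the set of toggleable indices strictly larger than $i$ and the value of $\mu_i$ itself.

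The main technical obstacle will be verifying acyclicity in Forman's sense. For a hypothetical alternating cycle $\mu(x_1)\prec x_2,\dots,\mu(x_t)\prec x_1$ with matching indices $i_k$, I write $x_{k+1}=\mu(x_k)\cup\{c_k\}_{v_{j_k}}$ for the unique element added, and case split on whether $j_k$ is greater than, equal to, or less than $i_k$. The first case forces $i_{k+1}>i_k$ (since $v_{j_k}$ becomes toggleable in $x_{k+1}$ at a larger index), while the latter two force $i_{k+1}=i_k$. Cyclicity then demands $i_1=\cdots=i_t=i$, but when every step has $j_k\leq i$, the element $\mu_i$ removed from $x_k(v_i)$ is never restored, so $x_{k+1}$ cannot lie in the ``upper'' set of its matched pair (which requires $\mu_i\in x_{k+1}(v_i)$)---contradiction.

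It remains to identify the critical cells: those $\alpha$ with $\alpha(v_i)=\{\mu_i(\alpha)\}$ for every $i$. Working downward from $v_m$ (a sink, giving $\mu_m=n$ and $\alpha(v_m)=\{n\}$), each value is successively determined, yielding the unique candidate $\alpha^\star(v_i)=\{n-\ell(v_i)\}$, where $\ell(v_i)$ is the length of the longest directed path in $G$ emanating from $v_i$. The standing assumption $\Hom(G,\tourn)\neq\emptyset$ forces $\ell(v_i)\leq n-1$ so that $\alpha^\star$ is a bona fide multihomomorphism, and Proposition~\ref{prop:dmtmain} then yields $\Hom(G,\tourn)\Searrow\{\alpha^\star\}$, establishing collapsibility.
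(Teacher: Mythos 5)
Your proof is correct, and it takes a genuinely different route from the paper's. The paper proceeds by induction on $n$: it first builds an acyclic matching at a single sink $a_1$ that pairs away cells with $n\in f(a_1)$, $|f(a_1)|>1$, iterates this over all sinks $a_1,\dots,a_r$, and then identifies the resulting subcomplex with $\Hom(G\setminus A,\overrightarrow{K}_{n-1})$ to invoke the induction hypothesis. You instead construct one global acyclic matching on the whole poset by picking a topological order, computing the ceiling $\mu_i(\alpha)$ at each vertex, and toggling at the largest index where $\alpha(v_i)\neq\{\mu_i(\alpha)\}$, then identifying the single critical cell directly as the "greedy-late" homomorphism $v_i\mapsto n-\ell(v_i)$. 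Your acyclicity argument is sound (one can in fact see it even more directly: if $j_k\leq i_k$ then $\mu_{i_k}$ is absent from $x_{k+1}(v_{i_k})$, so the toggle at $x_{k+1}$ would be an addition rather than a removal, i.e.\ $x_{k+1}\notin S$ already; so every step must have $j_k>i_k$ and the $i_k$ strictly increase, which is impossible around a cycle). The trade-off is the usual one: your single-shot matching is self-contained and exhibits the collapse explicitly, at the price of a more delicate acyclicity verification, whereas the paper's sink-by-sink matching is each time trivially acyclic but has to go through $n$ inductive rounds plus the reduction to $G\setminus A$.
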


\begin{proof}
We prove the statement by induction on $n$. Note that by Proposition \ref{prop:coprod} we can assume that $G$ is connected. The statement is clearly true for $n=1$ and similarly we see that $\Hom(G,\overrightarrow{K}_2)$ is either empty, a vertex (in the case that $G$ has at least one edge and admits a homomorphism $G \rightarrow \overrightarrow{K}_2$), or an edge (in the case that $G$ is an isolated vertex). Not let $n\geq 3$ and suppose that $\Hom(G, \tourn)\neq \emptyset$, which implies that the graph $G$ is acyclic. Let $A=\{a_1,\dots, a_r\}$ denote the set of all vertices of $G$ with outdegree $0$. Since $G$ is acyclic, $A\neq \emptyset$. Now let $S$ be the set of all multihomomorphisms $f\in \Hom(G, \tourn)$ such that $n \in f(a_1)$ and $|f(a_1)|>1$. 

If $S=\emptyset$, then it is easy to see that every $f \in \Hom(G,\tourn)$ satisfies $f(a_1) = n$.   If $S \neq \emptyset$, we will collapse $\Hom(G,\tourn)$ onto the complex $\{f\in \Hom(G,\tourn) : f(a_1)=n\}.$ Assuming $S\neq \emptyset$, define a matching
$\mu:S \rightarrow \Hom(G,\tourn)\setminus S$ as follows:
$$\mu(f)(v)=\begin{cases}
f(v) & {\rm{if~}} v\neq a_1,\\
f(v)\setminus\{n\} & {\rm{if~}} v=a_1.
\end{cases}$$
Observe that $\mu(f)\in \Hom(G,\tourn)$ for each $f\in S$ and $\mu(f)\prec f$. We now show that $S$ (with the map $\mu$ defined above) is an acyclic partial matching. For a contradiction, suppose $S$ is not an acyclic matching and let $\mu(f_1) \prec f_2\succ \mu(f_2) \prec f_3\succ \dots\succ \mu(f_t) \prec f_1$ be a minimal cycle. Since $\mu(f_t) \prec f_1$ and $n \notin \mu(f_t)(a_1)$, we conclude that $f_t=f_1$, which is a contradiction to the minimality of the cycle. Hence, from Proposition \ref{prop:dmtmain} we have
\[\Hom(G,\tourn)\Searrow \Hom(G,\tourn)\setminus (S\sqcup \mu(S))= \{f\in \Hom(G,\tourn) : f(a_1)=n\}.\]

Furthermore, for any $i\in [r-1]$, using similar arguments as above we can show that
\[\{f\in \Hom(G,\tourn) : f(a_j)=n\text{ for each } j \in [i]\} \Searrow \{f\in \Hom(G,\tourn) : f(a_j)=n\text{ for each } j \in [i+1]\}.\]

Hence $\Hom(G,\tourn)\Searrow \{f \in \Hom(G,\tourn) : f(a_j)=n\text{ for each } j \in [r]\} \cong\Hom(G\setminus A, \overrightarrow{K}_{n-1})$. The result then follows from induction. 
\end{proof}

\begin{rem}
One may wonder if there exists other acyclic graphs $G$ with the property that $\Hom(T,G)$ is empty or collapsible for every digraph $T$.  We observe that not every acyclic graph $G$ has this property, since in the proof of Proposition \ref{prop:universal} we see that any homotopy type can be realized as $\Hom(K_2, G)$ for some acyclic (and in fact transitive) graph $G$. Another natural question to ask is whether there exists a graph $G$ with the property that $\Hom(T,G)$ is \emph{nonempty} and collapsible for any $T$.  First note that such a graph must contain a looped vertex since in particular it must admit a graph homomorphism from any graph $T$.  We address this question in Section \ref{sec:homotopy}.
\end{rem}

 Theorem \ref{thm:tourncollapse} in particular implies that the one-skeleton $(\Hom(G,\tourn))^{(1)}$ is connected for any $G$, assuming some homomorphism $G \rightarrow \tourn$ exists.  Note that a digraph $G$ is acyclic if and only if $G \rightarrow \tourm$ for some $m$.  In this case any homomorphism $G \rightarrow \tourm$ can be `reconfigured' into any other.  In this context one is interested in the diameter of the configuration space, and here we can give a tight bound.

\begin{thm}\label{thm:homgconnected}
Suppose $G$ is a digraph, and let $\overrightarrow{K_n}$ denote the transitive tournament on $n$ vertices.  Then the one-skeleton $(\Hom(G,\overrightarrow{K_n}))^{(1)}$ is either empty or has diameter at most $|V(G)|$.
\end{thm}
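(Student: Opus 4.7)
The plan is to give a direct constructive argument based on a canonical intermediate homomorphism, avoiding the induction on $n$ used in Theorem \ref{thm:tourncollapse}. Since $\overrightarrow{K_n}$ is acyclic, the nonemptiness of $\Hom(G,\overrightarrow{K_n})$ forces $G$ to be acyclic; fix a topological ordering $v_1,\dots,v_m$ of $V(G)$ so that every directed edge $v_i\to v_j$ satisfies $i<j$. Under this ordering, a homomorphism is simply an integer labeling $f:V(G)\to [n]$ which strictly increases along edges.

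Given two homomorphisms $f,g\in \Hom_0(G,\overrightarrow{K_n})$, the key idea is to introduce the pointwise maximum $h(v):=\max(f(v),g(v))$ as an intermediate vertex of $\Hom(G,\overrightarrow{K_n})$, and then build explicit walks $f\leadsto h$ and $h\leadsto g$ in the 1-skeleton whose combined length is at most $|\{v:f(v)\neq g(v)\}|\leq |V(G)|$. First, $h$ itself is a homomorphism: along any edge $v_i\to v_j$ one has $f(v_i)<f(v_j)$ and $g(v_i)<g(v_j)$, and the elementary implication $a<c,\ b<d \Rightarrow \max(a,b)<\max(c,d)$ yields $h(v_i)<h(v_j)$.

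The core construction is a walk from $f$ to $h$ of length $|S|$ where $S=\{v:f(v)<g(v)\}$. I list $S$ in reverse topological order $v_{i_1},\dots,v_{i_k}$ (so $i_1>i_2>\cdots>i_k$) and at step $j$ update the label of $v_{i_j}$ from $f(v_{i_j})$ to $g(v_{i_j})=h(v_{i_j})$. For this to correspond to an edge of $(\Hom(G,\overrightarrow{K_n}))^{(1)}$ I must verify two things: the updated map is still a homomorphism, and the multihomomorphism that assigns both $\{f(v_{i_j}),g(v_{i_j})\}$ at $v_{i_j}$ and singletons elsewhere lies in $\Hom(G,\overrightarrow{K_n})$. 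Both reduce to a brief case analysis exploiting the topological order: at the moment $v_{i_j}$ is updated, every out-neighbor $w$ of $v_{i_j}$ in $S$ has already been relabeled to $g(w)$, while $w\notin S$ still carries $f(w)$, and in either case $\max(f(v_{i_j}),g(v_{i_j}))=g(v_{i_j})<g(w)\leq \max(f(w),g(w))$; every in-neighbor $u$ of $v_{i_j}$ has strictly smaller index, hence has not yet been processed, still carries $f(u)$, and $f(u)<f(v_{i_j})<g(v_{i_j})$. An entirely analogous argument, with $f$ and $g$ interchanged and $T=\{v:g(v)<f(v)\}$ processed in reverse topological order, yields a walk from $g$ to $h$ of length $|T|$.

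Concatenating the two walks gives a walk from $f$ to $g$ of length at most $|S|+|T|=|\{v:f(v)\neq g(v)\}|\leq |V(G)|$, proving the diameter bound. The main obstacle is the bookkeeping in the adjacency verification, since one must simultaneously check that both the old label $f(v_{i_j})$ and the new label $g(v_{i_j})$ are compatible with all current neighboring labels; the cleanest way to organize this is to observe that every intermediate labeling $f_j$ sits pointwise between $f$ and $h$, so that for any edge of $G$ at least one endpoint still carries its original $f$-label and the required strict inequalities follow from the homomorphism property of $f$, $g$, and $h$ together with the defining inequality for membership in $S$.
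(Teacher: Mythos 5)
Your proposal is correct, and the overall strategy is the same as the paper's: introduce the pointwise extremum $h$ of $f$ and $g$ as an intermediate homomorphism, show that it is indeed a homomorphism into the transitive tournament, and then explicitly build a walk $f \leadsto h \leadsto g$ of total length $|\{v : f(v)\neq g(v)\}| \leq |V(G)|$. The differences are cosmetic but worth noting. You take $h=\max(f,g)$ where the paper takes $h=\min(f,g)$; these are mirror images of each other, so either works. More substantively, the paper orders the vertices to be updated by \emph{increasing $g$-value} ($g(a_1)\le g(a_2)\le\cdots$), and the case analysis verifying that each intermediate $\sigma_i$ is a multihomomorphism uses inequalities among the $g$-values. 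You instead fix a topological ordering of the acyclic graph $G$ and process the vertices of $S$ in \emph{reverse topological order}; your verification then leans on the fact that out-neighbors of the vertex being updated have already been relabeled (and in-neighbors have not) and on $f,g,h$ being strictly increasing along edges. Both orderings make the same argument go through; yours foregrounds the acyclicity of $G$ that was already established from $\Hom(G,\tourn)\neq\emptyset$, while the paper's sidesteps any need to fix a topological order. Your case analysis is correctly organized — in particular the observation that an unprocessed vertex of $S$ cannot be an out-neighbor of the vertex currently being updated (since it would have to have both larger and smaller index) is the right thing to check.
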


\begin{proof}
Without loss of generality assume $\Hom(G,
\tourn)$ is nonempty. From Theorem \ref{thm:tourncollapse} we know that $\Hom(G,\tourn)$ is connected, and here we will explicitly construct a path between any two elements of length bounded by $|V(G)|$. For this let $f,g \in \Hom_0(G, \tourn)$ such that $f\neq g$. Define a function $h:G\rightarrow \tourn$ by
$$h(v)=\mathrm{min}\{f(v),g(v)\}, ~\forall~ v \in V(G).$$
\begin{claim}\label{claim:HomGconnected} For any digraph $G$ such that $\Hom(G,
\tourn)$ is nonempty, we have the following.
\begin{enumerate}
    \item $h\in \Hom_0 (G,\tourn)$, and
    \item $f$ and $g$ are connected to $h$ in $\Hom(G,\tourn)$.
\end{enumerate}
\end{claim}
%\begin{proof}[Proof of Claim \ref{claim:HomGconnected}]
To establish Claim \ref{claim:HomGconnected}(1), suppose $h$ is not a homomorphism, so that there exists an edge $(u,v)\in E(G)$ such that $h(v)\leq h(u)$. Since $f$ and $g$ are homomorphisms, we have $f(u)<f(v)$ and $g(u)<g(v)$ . Without loss of generality suppose $f(u) \leq g(u)$, so that $h(u)=f(u)$ (by definition of $h$). Since $h(v)\leq h(u)=f(u)<f(v)$, in particular we have that $h(v) \neq f(v)$ and hence $h(v)=g(v)$. This implies that $g(u)<g(v)=h(v)\leq h(u)=f(u)$, which contradicts the fact that $f(u)\le g(u)$. A similar argument works when $h(u)=g(u)$. Therefore $h:G \rightarrow \tourn$ is a homomorphism.

We now show that $f$ is connected to $h$ in $\Hom(G,\tourn)$. Let $A=\{v\in V(G): h(v)\neq f(v)\}$. Clearly, $g(v)<f(v)$ for all $v\in A$. Let $A=\{a_1,a_2,\dots,a_t\}$ with $g(a_1)\leq g(a_2)\leq\dots \leq g(a_t)$. For $i\in [t]$, define  $\sigma_i: V(G)\rightarrow 2^{[n]}\setminus \emptyset$ as 
$$\sigma_i(x)=
\begin{cases}
%f(x) & \text{if} ~x\notin A, \\
g(x) & \text{if} ~x \in \{a_1,\dots,a_{i-1}\},\\
\{f(x),g(x)\} & \text{if} ~x=a_i,\\
f(x)& \text{otherwise}.
\end{cases}$$

We claim that the $\sigma_i$ are edges in the complex $\Hom(G,\tourn)$ that give a path from vertex $f$ to vertex $h$. The fact that they are $1$-dimensional and give the desired path is clear, and we are left to verify that $\sigma_i$ is a multihomomorphism for each $i \in [t]$. For this, we have to show that for any $(u,v)\in E(G)$, $\sigma_i(u)\times \sigma_i (v)\subseteq E(\tourn)$. We split the proof in four cases. 

\begin{enumerate}
    \item[(i)] Let $\{u,v\}\cap \{a_1,\dots,a_i\}=\emptyset$. In this case $\sigma_i(u)\times \sigma_i (v)\subseteq E(\tourn)$ comes from the fact that $f \in \Hom(G,\tourn)$.
    
    \item[(ii)] Let $u=a_{j_1}$ and $v =a_{j_2}$ for some $j_1,j_2\in [i]$. Since $g(a_1)\leq g(a_2)\leq\dots \leq g(a_i)<f(a_i)$ and $g \in \Hom(G,\tourn)$, we get that $j_1<j_2$ and $\sigma_i(a_{j_1})\times \sigma_i(a_{j_2})\subseteq E(\tourn)$.
    
    \item[(iii)] Let $\{u,v\}\cap \{a_1,\dots,a_i\}=\{u\}$. Since $\sigma_i(v)=f(v)$, the condition $g(u)<f(u)$, along with the fact that $f, g \in \Hom(G,\tourn)$, implies that $\sigma_i(u)\times \sigma_i(v)\subseteq E(\tourn)$. 
    
    \item[(iii)] Let $\{u,v\}\cap \{a_1,\dots,a_i\}=\{v\}$. Since $g(u) < g(v)$ and $u \notin \{a_1,\dots,a_i\}$, we get that $f(u)\leq g(u)$. Thus the result follows from the facts that $g\in \Hom(G,\tourn)$ and $g(v) < f(v)$ .
\end{enumerate}
Observe that the path from $f$ to $h$ obtained above is of length $|A|=|\{v\in V(G):h(v)\neq f(v)\}|$. A similar path of length $|\{v\in V(G):h(v)\neq g(v)\}|$ can be constructed from $g$ to $h$. Hence we get a path from $f$ to $g$ of length $\{v\in V(G):h(v)\neq f(v)\}+\{v\in V(G):h(v)\neq g(v)\}=\{v\in V(G):f(v)\neq g(v)\}\leq|V(G)|$. This establishes the second part of Theorem \ref{thm:homgconnected}.
\end{proof}

We note that Theorem \ref{thm:homgconnected} is tight since for instance the one-skeleton of $\Hom(\tourm, \tourn)$ has diameter $m$ whenever $m < n$.   See Section \ref{sec:mixed} for more details regarding these complexes.

Now suppose $G$ is a connected digraph with oriented chromatic number $\chi_o(G) = 3$, so that $G$ admits a homomorphism to $\overrightarrow{K_3}$ or to the $3$-cycle $\overrightarrow{C}_3$.  Note that for any digraph $G$ (regardless of chromatic number), $\Hom(G,\overrightarrow{C}_3)$ is either empty or has at least three connected components, each of which is a point.  To see this note that $\overrightarrow{C}_3$ has no vertex $v$ satisfying $\indeg(v) \geq 2$ or $\outdeg(v) \geq 2$. In addition, if $f_1 \in \Hom(G,\overrightarrow{C}_3)$ we obtain two other elements $f_2$ and $f_3$ by rotating the images of the vertices of $G$.   Hence with Theorem \ref{thm:homgconnected} this gives a complete picture of mixing of digraphs with directed chromatic number 3. Similarly, if $G$ is connected and satisfies $\chi_o(G) =2$, then we have a complete understanding of $\Hom(G,T_3)$ for any choice of $T_3$.  Namely, we have that $\Hom(G,\overrightarrow{K_3})$ is nonempty and connected (in fact contractible)  whereas $\Hom(G,\overrightarrow{C}_3)$ is nonempty and has at least three components, each of which is a point. With these observations we obtain digraph analogues of the results in \cite{CHJconnected} and \cite{CHJmixing}.

In general, if $G$ has oriented chromatic $n$ for $n \geq 4$ the connectivity of $\Hom(G,T_n)$ will depend on the tournament $T_n$.  Note that for a fixed tournament $T_n$, the complex $\Hom(T_n,T_n)$ is nonempty and has no edges.  Hence $\Hom(T_n,T_n)$ will be connected if and only if $\Aut(T_n)$, the automorphism group of $T_n$, is trivial.

\subsection{Transitive tournaments and mixed subdivisions}\label{sec:mixed}
In this section we further study the special case of homomorphism complexes between transitive tournaments. From Theorem \ref{thm:tourncollapse} we know that $\Hom(\tourm, \tourn)$ is contractible whenever $m \leq n$, but here we employ some notions from geometric combinatorics to precisely describe its structure as a polyhedral complex.

We first review some notions from the theory of mixed subdivisions.  Let $P_1, \dots P_k \subset \mathbb{R}^d$ be convex polytopes which we assume to be full-dimensional.  The \defi{Minokowski sum} is defined as the set
\[P_1 + \cdots + P_k := \{x_1 + \cdots + x_k: x_i \in P_i\}.\]

A \defi{Minkowski cell} of the Minkowski sum $\sum_{i=1}^k P_i$ is any full-dimensional polytope $C = \sum_{i=1} C_i$, where $C_i \subset P_i$ is some (not necessarily full dimensional) polytope whose vertices are a subset of the vertices of $P_i$. A \defi{mixed subdivision} of $P = \sum_{i=1}^k P_i$ is a collection ${\mathcal S}$ of Minkowski cells whose set-theoretical union is $P$, and whose intersections are given as Minkowski sums, so that if $C = \sum_{1=1}^k C_i$ and $C^\prime = \sum_{i=1}^k C_i^\prime$ are Minkowski cells then $C_i \cap C_i^\prime$ is a face of $C_i$ and of $C_i^\prime$.

Now let $\Delta^{b}$ denote the $b$-dimensional simplex given by $\conv\{{\bf e}_0, {\bf e}_1, \dots {\bf e}_b\} \subset {\mathbb R}^b$, where ${\bf e}_i$ for $i = 1, \dots, b$ are the standard basis vectors in ${\mathbb R}^b$ and ${\bf e}_0 = {\bf 0}$ is the origin. If each $P_i = \Delta^i$, then the Minkowski sum $\sum_{i=1}^a P_i$ is simply the dilation $a \Delta^b$ of a simplex, and any Minkowski cell can be thought of as an ordered set $(S_1, \dots, S_a)$, where $S_i \subset \{0,1, \dots,a\}$.  The study of mixed subdivisions of $a \Delta^b$ (which correspond to triangulations of the product of simplices $\Delta^{a-1} \times \Delta^b$ under the \emph{Cayley trick}) is an active area of research, with connections to flag arrangements \cite{ArdBil}, the geometry of products of minors of a matrix \cite{BabBil},  tropical geometry \cite{DevStu}, and determinantal ideals \cite{Stu}.

There is a certain mixed suddivision of $a \Delta_b$ that has a particularly nice combinatorial description. For this let $K_{a,b+1}$ denote the (undirected) complete bipartite graph on vertex sets $[a] \sqcup [b+1]$.  A spanning tree of $K_{a,b+1}$ then defines a Minkowski cell $(S_1, \dots, S_a)$ of $a \Delta^b$, where each $S_i$ is given by the neighbors of the vertex $i = 1,\dots, a$. Embed the vertices of $K_{a,b+1}$ in the plane using coordinates $(0,1), (0,2), \dots (0,a)$, $(1,1), (1,2), \dots, (1,b+1)$.  A spanning tree of $K_{a,b+1}$ is \defi{non-crossing} if its straight line drawing in this embedding does not have crossings.  The collection of all Minkowski cells that correspond to non-crossing spanning trees of $K_{a,b+1}$ defines a mixed subdivision ${\mathcal T}_b^a$ of $a \Delta^b$ that we call the \defi{staircase subdivision}.  We refer to \cite{DelRamSan} for more details. 

\begin{prop}\label{prop:mixedsub}
For any integers $2 \leq m \leq n$, the complex $\Hom(\tourm, \tourn)$ is homeomorphic to ${\mathcal T}_{n-m}^m$.  In particular, $|\Hom(\tourm,\tourn)|$ is homeomorphic to a ball of dimension $n-m$.
\end{prop}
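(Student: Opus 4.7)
The plan is to construct an explicit cellular homeomorphism $\Phi : |\Hom(\tourm,\tourn)| \to m\Delta^{n-m}$ whose image cell structure is exactly the staircase subdivision ${\mathcal T}_{n-m}^m$. Unwinding the definitions, a cell of $\Hom(\tourm,\tourn)$ is a multihomomorphism $\alpha=(T_1,\dots,T_m)$ with each $T_i\subseteq[n]$ nonempty and $\max T_i<\min T_{i+1}$; its closure is the product of simplices $\prod_i \Delta^{T_i}$. A cell of ${\mathcal T}_{n-m}^m$ can be indexed by a sequence $(S_1,\dots,S_m)$ of nonempty $S_i\subseteq\{0,1,\dots,n-m\}$ satisfying $\max S_i\le \min S_{i+1}$, with the top-dimensional cells corresponding to non-crossing spanning trees of $K_{m,n-m+1}$ (equality throughout, $\sum_i|S_i|=n$). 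I would set up the indexing bijection $\varphi(T_1,\dots,T_m):=(S_1,\dots,S_m)$ with $S_i:=T_i-i$; the strict interleaving on the $T_i$ becomes the non-strict interleaving on the $S_i$, and the bound $T_i\subseteq\{i,i+1,\dots,n-m+i\}$ (forced by the strict interleaving) places each $S_i$ inside $\{0,\dots,n-m\}$. This $\varphi$ is a face-poset isomorphism.

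Next I would define $\Phi$ cellwise by the affine rule $(x_1,\dots,x_m)\mapsto \sum_i \sigma_i(x_i)$, where $\sigma_i:\Delta^{T_i}\to\Delta^{S_i}$ is the shift $e_t\mapsto e_{t-i}$; since each $x_i$ has coordinate sum $1$, the image lies in $m\Delta^{n-m}$. Because $\varphi$ respects face inclusion, these cell maps glue to a well-defined continuous map on $|\Hom(\tourm,\tourn)|$, carrying the cell $\prod_i\Delta^{T_i}$ onto the Minkowski cell $\sum_i\Delta^{S_i}$ of ${\mathcal T}_{n-m}^m$.

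The main technical obstacle is to verify that the restriction of $\Phi$ to each cell is an affine \emph{isomorphism} onto its target Minkowski cell, equivalently that the Minkowski sum $\sum_i \Delta^{S_i}$ attains the full expected dimension $\sum_i(|S_i|-1)$. Here the non-crossing condition is essential. For each coordinate $j\in\{0,\dots,n-m\}$ the set $\{i:j\in S_i\}$ is a contiguous interval $[\ell_j,r_j]$, and the inequality $\max S_{i-1}\le\min S_i$ forces $S_i=\{j\}$ for every strictly interior index $\ell_j<i<r_j$, so any tangent vector $v_i\in\R^{S_i}$ supported there must vanish. Starting at the smallest $j$ and inducting upward, this rigidity together with the zero-sum constraint on each $v_i$ propagates the vanishing $v_i=0$ to the endpoint indices as well, establishing injectivity of $\Phi$ on each cell. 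Once this is shown, the cellwise affine isomorphisms patch together via $\varphi$ into a global cellular homeomorphism $|\Hom(\tourm,\tourn)|\xrightarrow{\cong} m\Delta^{n-m}$ whose image subdivision is precisely ${\mathcal T}_{n-m}^m$; the final assertion follows since any polyhedral decomposition of $m\Delta^{n-m}$ is homeomorphic to a closed ball of dimension $n-m$.
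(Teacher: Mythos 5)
Your argument is correct, and it takes a genuinely more explicit and constructive route than the paper's. The paper's proof only records the combinatorial bijection on \emph{maximal} cells: it uses the shift $N(i) \mapsto S(i) = N(i) + (i-1)$ to match non-crossing spanning trees of $K_{m,n-m+1}$ with maximal multihomomorphisms $(T_1,\dots,T_m)$, and then declares ``the result follows,'' implicitly invoking that the full face posets are isomorphic, that regular CW complexes with isomorphic face posets are homeomorphic, and that ${\mathcal T}^m_{n-m}$ is a polyhedral subdivision of the ball $m\Delta^{n-m}$ (a fact cited from the literature on mixed subdivisions). Your proof sets up the same shift (in the inverse direction, $S_i = T_i - i$) but promotes it from a poset isomorphism to an explicit piecewise-affine homeomorphism $\Phi$, and instead of citing the theory of mixed subdivisions for non-degeneracy, you re-derive the key geometric input: that $\sum_i \Delta^{S_i}$ has the expected dimension whenever the bipartite support graph is a non-crossing forest. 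This rigidity argument is the genuine technical content that the paper absorbs into the cited construction of ${\mathcal T}^m_{n-m}$; carrying it out makes your proof more self-contained. What the paper's approach buys is brevity; what yours buys is an explicit homeomorphism and independence from the black-box properties of staircase subdivisions.

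One place where your writeup is terse and would benefit from an extra sentence: in the inductive step of the rigidity argument, after the interior indices of $[\ell_j,r_j]$ are forced to vanish, you still need to handle the two endpoints. The point worth spelling out is that when $\ell_j < r_j$, the interleaving inequality $\max S_{\ell_j} \leq \min S_{\ell_j+1} \leq j$ (since $j \in S_{\ell_j+1}$) forces $\max S_{\ell_j} = j$, so $S_{\ell_j} \subseteq \{0,\dots,j\}$ and the zero-sum constraint on $v_{\ell_j}$ together with the induction hypothesis for smaller $j'$ gives $(v_{\ell_j})_j = 0$; then the equation $\sum_{i=\ell_j}^{r_j}(v_i)_j = 0$ (from $\sum_i v_i = 0$) leaves only $(v_{r_j})_j = 0$. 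Also note you are implicitly using \emph{both} the intrinsic zero-sum constraint on each $v_i$ \emph{and} the assumption $\sum_i v_i = 0$; your prose mentions only the former. Finally, when you say the cellwise isomorphisms ``patch together'' into a global homeomorphism, you are using that the Minkowski cells of ${\mathcal T}^m_{n-m}$ intersect only along shared faces and cover $m\Delta^{n-m}$, i.e.\ that ${\mathcal T}^m_{n-m}$ is a subdivision; this is the one cited fact you cannot avoid, and it is worth saying explicitly that this is where you use it.
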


\begin{proof}
The proof of this statement is implicit in \cite{DocEng} but we spell out the details here.  Recall that the maximal cells in the subdivision  ${\mathcal T}_{n-m}^m$ correspond to noncrossing spanning trees of the bipartite graph $K_{m,n-m+1}$, where as above $K_{m,n-m+1}$ has vertex set $[m] \sqcup [n-m+1]$. Each spanning tree $T$ corresponds to a mixed cell given by
\[\Delta_T = \Delta^{N(1)} + \Delta^{N(2)} + \cdots + \Delta^{N(m)},\]
\noindent
where for all $i = 1, \dots m$, $N(i) = N_{K_{m,n-m+1}}(i)$ is the set of vertices adjacent to $i$, and for $S \subset [n]$ we use $\Delta^S$ to denote the simplex determined by vertices in $S$. 

Since $T$ is noncrossing, we have that if $x \in S_i$ and $y \in S_j$ with $i<j$, then $x \leq y$. Hence, if we define $S(1) = N(1)$, $S(2) = \{x+1:x \in N(2)\}$, \dots, $S(m) = \{x+m-1: x \in N(m)\}$ we get an ordered collection $(S(1), S(2), \dots S(m))$ of subsets of $[n]$ with the property that if $x \in S_i$ and $y \in S_j$ with $i<j$ then $x<y$.  These are precisely the maximal cells (corresponding to multihomomorphisms) in $\Hom(\tourm, \tourn)$. The result follows.
\end{proof}

\begin{example}
For examples of the complexes ${\mathcal T}^m_{n-m}$ we refer to Figure \ref{fig:example1}, where the cases of $(m,n)=(2,4)$ and $(m,n) = (3,5)$ are depicted.
\end{example}

\begin{rem}
We note that there exist contractible $\Hom(T_m,T_n)$ that do not arise from transitive tournaments.  For instance, the tournament $T_4$ in Figure \ref{fig:example_hom_contractible} has the cycle 132 but we see that $\Hom(T_2,T_4)$ is contracitble. 
\end{rem}

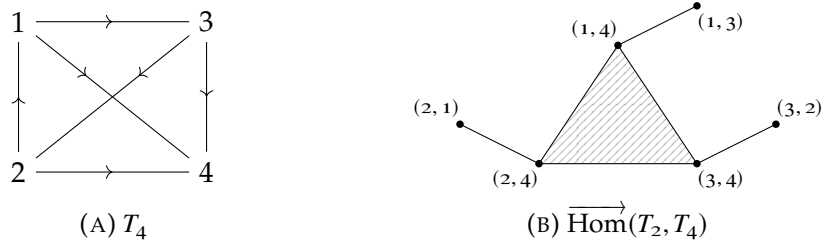
\begin{figure}[H]
\begin{subfigure}[]{0.4\textwidth}
    \centering
    \begin{tikzpicture}
\node (a) at (0,0) {1};
\node (c) at (0,-2) {2};
\node (b) at (2.5,0) {3};
\node (d) at (2.5,-2) {4};
\draw[->-=.5] (c)--(a) node[midway,above right] {};
\draw[->-=.5] (a)--(b) node[midway,above right] {};
\draw[->-=.33] (b)--(c) node[midway,above right] {};
\draw[->-=.5] (b)--(d) node[midway,above right] {};
\draw[->-=.5] (c)--(d) node[midway,above right] {};
\draw[->-=.33] (a)--(d) node[midway,above right] {};
%\draw[->] (b) edge (d) (c) edge (d);
\end{tikzpicture}
\caption{$T_4$}
\end{subfigure}
\begin{subfigure}[]{0.4\textwidth}
    \centering
    \begin{tikzpicture}[scale=1.05]
   \pattern[pattern=north east lines,opacity=0.5] (0.0,  0.0)--(-1.0, -1.5)--(1.0, -1.5)--cycle;
  %\fill[fill=gray!20] (0.0,  0.0)--(-1.0, -1.5)--(1.0, -1.5);
   \path[draw] ( 0.0,  0.0)--(-1.0, -1.5);
   \path[draw] (-1.0, -1.5)--( 1.0, -1.5);
   \path[draw] ( 0.0,  0.0)--( 1.0, -1.5);
   \path[draw] (-1.0, -1.5)--( -2.0, -1.0);
   \path[draw] ( 0.0,  0.0)--( 1.0, 0.5);
   \path[draw] ( 1.0, -1.5)--( 2, -1.0);
   
   \node[dot] at ( 0.0,  0.0) {};
   \node[dot]at (-1.0, -1.5) {};
   \node[dot] at ( 1.0, -1.5) {};
   \node[dot] at ( -2.0, -1.0) {};
   \node[dot] at ( 1.0, 0.5) {};
   \node[dot] at ( 2, -1.0) {};
   
   \node[inner sep=0pt] () at ( -0.3,  0.2) {{\tiny$(1,4)$}};
   \node[inner sep=0pt] () at ( -1.3,  -1.7) {{\tiny$(2,4)$}};
   \node[inner sep=0pt] () at ( 1.3,  -1.7) {{\tiny$(3,4)$}};
   \node[inner sep=0pt] () at ( -2.3,  -0.8) {{\tiny$(2,1)$}};
   \node[inner sep=0pt] () at ( 1.3,  0.3) {{\tiny$(1,3)$}};
   \node[inner sep=0pt] () at ( 2.3,  -0.8) {{\tiny$(3,2)$}};
   \end{tikzpicture}
    \caption{$\Hom(K_2, T_4)$}
\end{subfigure}

   \caption{Non-transitive $4$-tournament $T_4$ such that $\Hom(T_2, T_4)$ is contractible}\label{fig:example_hom_contractible}
\end{figure}

\section{Homotopy of directed graphs}\label{sec:homotopy}

In this section we use the tools developed above to construct various notions of \emph{homotopy} for graph homomorphisms of digraphs. Again this runs parallel to the theory for undirected graphs as worked out in \cite{DocHom}, where a notion of $\times$-homotopy was introduced.  The basic idea is to use the 1-skeleton of the relevant homomorphism complex to define a notion of homotopy between graph homomorphisms.  In the undirected case we have that the complex $\oldHom(G,H)$ is homotopy equivalent to the clique complex on the looped vertices of the graph $H^G$, and hence one can equally well work with $(H^G)^{{o}}$ when considering homotopy. This idea was further explored by the first author and Schultz in \cite{DocSch}, as well as by Chih and Scull in \cite{ChiScu}.

As we will see, a fundamental difference in the directed setting is that there are a number of natural notions of paths in the exponential graph $H^G$, only one of which corresponds to paths in the corresponding homomorphism complex. In particular, if one considers the digraph structure of $H^G$, a notion of direction can be incorporated into the theory. In recent years the concept of a \emph{directed} homotopy theory for discrete structures has found applications in topological data science and the study of concurrency, see for instance \cite{FajGouRau}.  The category of digraphs provides a context where a notion of directed homotopy naturally arises, and seems to be a useful testing ground for such ideas. 

 To motivate our definitions we recall some analogous constructions for undirected graphs discussed in \cite{DocHom}. Here two graph homomorphisms $f,g:G \rightarrow H$ are \defi{$\times$-homotopic} if there exists a path from $f$ to $g$ in the (undirected) homomorphism complex $\oldHom(G,H)$, or equivalently a path along looped vertices in the graph $H^G$.  In \cite{DocHom} this was seen to be equivalent to existence of a graph homomorphism (or \emph{homotopy}) $F: G \times \overleftrightarrow{I}_n \rightarrow H$ with the property that $F(x,0) = f(x)$ and $F(x,n) = g(x)$.  Here $\overleftrightarrow{I}_n$ is a looped path of length $n$ (which we will also think of as a digraph, see below). The $\times$ notation is used to emphasize that the underlying product involved in the internal hom $H^G$ is the \emph{categorical} product. This distinguishes the construction from other approaches (for instance \cite{BBDL}) where the \emph{cartesian} product was used.

We next explore these constructions for digraphs. In this context there are a number of ways one may wish to define the notion of homotopy, based on paths in various parametrization spaces and different notions of the interval object $I_n$ for digraphs. For our first notion of homotopy we consider (usual, topological) paths in the homomorphism complex.

\begin{defn}
Suppose $f,g:G \rightarrow H$ are homomorphisms of digraphs.  We will say that $f$ and $g$ are \defi{bihomotopic}, denoted $f \bhom g$, if there exists a path from $f$ to $g$ in the complex $\Hom(G,H)$.
\end{defn}

In Remark \ref{rem:clique} we discussed the homotopy equivalence $\Hom(G,H) \simeq X({\mathcal G}(H^G))$.  Recall that the latter complex is the clique complex of the (undirected) graph ${\mathcal G}(H^G))$, whose vertices are the looped vertices of the digraph $H^G$, and where $f \sim g$ if both edges $(f,g)$ and $(g,f)$ exist. For any non-negative integer $n$, we define the graph $\overleftrightarrow{I}_n$ to be the digraph with vertex set $\{0,1,\dots n\}$ and edges $(i,j)$ whenever $|i-j| \leq 1$ (so that $\overleftrightarrow{I}_n$ is a bidirected path with loops on every vertex, see Figure \ref{fig:intervals}).  We then have the following observation.

\begin{prop}
Suppose $f,g: G \rightarrow H$ are homomorphisms of digraphs. Then $f$ and $g$ are bihomotopic if and only if there exists a non-negative integer $n$ and a graph homomorphism $F:G \times \overleftrightarrow{I}_n \rightarrow H$ such that $F(x,0) = f(x)$ and $F(x,n) = g(x)$.
\end{prop}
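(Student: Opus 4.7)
The plan is to combine the exponential adjunction from Section~\ref{sec:definitions} with the clique-complex identification of $\Hom(G,H)$ given in Remark~\ref{rem:clique}, reducing the question to a statement about walks in an auxiliary undirected graph. By the natural bijection $\Hom_0(G \times \overleftrightarrow{I}_n, H) \cong \Hom_0(\overleftrightarrow{I}_n, H^G)$, a homomorphism $F: G \times \overleftrightarrow{I}_n \to H$ with $F(x,0) = f(x)$ and $F(x,n) = g(x)$ corresponds to a homomorphism $\tilde F: \overleftrightarrow{I}_n \to H^G$ with $\tilde F(0) = f$ and $\tilde F(n) = g$. Since every vertex of $\overleftrightarrow{I}_n$ carries a loop, each $\tilde F(i)$ must be a looped vertex of $H^G$, i.e., a graph homomorphism $G \to H$; and since both directed edges $(i, i+1)$ and $(i+1, i)$ lie in $\overleftrightarrow{I}_n$, the consecutive images $\tilde F(i)$ and $\tilde F(i+1)$ are joined by both directed edges in $H^G$, hence are adjacent in the graph $\mathcal{G}(H^G)$ of Remark~\ref{rem:clique}. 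Therefore homomorphisms $F$ of the prescribed form are in bijection with walks of length $n$ from $f$ to $g$ in $\mathcal{G}(H^G)$.

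It then suffices to show that such a walk exists if and only if $f \bhom g$. Given a walk $f = f_0, f_1, \ldots, f_n = g$ in $\mathcal{G}(H^G)$, each adjacency $f_i \sim f_{i+1}$ amounts to the assertion that the set-valued map $\alpha_i(v) = \{f_i(v), f_{i+1}(v)\}$ is a multihomomorphism $G \to H$. The associated cell $\prod_{v \in V(G)} \Delta^{\alpha_i(v)}$ in $\Hom(G,H)$ is convex and contains both $f_i$ and $f_{i+1}$ as vertices, so concatenating straight-line segments through these cells yields a path in $\Hom(G,H)$ from $f$ to $g$. Conversely, any path in the polyhedral complex $\Hom(G,H)$ can be homotoped into its $1$-skeleton, producing a sequence $f = f_0, \ldots, f_N = g$ in which consecutive terms span a $1$-cell of $\Hom(G,H)$; such a pair differs at exactly one vertex and in particular corresponds to a valid multihomomorphism, so that $f_i$ and $f_{i+1}$ are adjacent in $\mathcal{G}(H^G)$, giving the required walk.

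The argument is essentially a direct unwinding of the adjunction together with the cellular structure of $\Hom(G,H)$, so there is no substantive obstacle. The one minor point requiring care is that a single edge of $\mathcal{G}(H^G)$ may connect two homomorphisms that differ at many vertices, whereas edges in the $1$-skeleton of $\Hom(G,H)$ correspond only to single-vertex changes; this is harmless because the cell spanned by the two endpoints is convex and therefore already provides the required path-connection, so walks in $\mathcal{G}(H^G)$ and walks in the $1$-skeleton of $\Hom(G,H)$ determine the same equivalence relation on $\Hom_0(G,H)$.
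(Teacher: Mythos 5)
Your argument is correct. The paper states this proposition without proof (treating it as a direct transcription of the corresponding fact for undirected graphs from \cite{DocHom}), so there is no proof to compare against, but your chain of reductions is sound. Passing through the adjunction $\Hom_0(G\times \overleftrightarrow{I}_n, H)\cong \Hom_0(\overleftrightarrow{I}_n, H^G)$ correctly identifies the homomorphisms $F$ with walks along looped, bi-directed edges in $H^G$, i.e.\ with walks in $\mathcal{G}(H^G)$; and your two-way translation between such walks and topological paths in $\Hom(G,H)$ is handled carefully, including the point that a single edge of $\mathcal{G}(H^G)$ corresponds to a (possibly higher-dimensional but still convex) cell of $\Hom(G,H)$ rather than necessarily to a $1$-cell, which is where a sloppier version of the argument could go astray. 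The only cosmetic remark is that the reduction could be stated even more directly: since $\Hom(G,H)$ is a regular CW complex, $f$ and $g$ lie in the same path-component iff they are joined by a walk in the $1$-skeleton, and each edge of the $1$-skeleton is a doubleton multihomomorphism, hence an edge of $\mathcal{G}(H^G)$; but your route via Remark~\ref{rem:clique} is equally valid and has the small advantage of making the role of the exponential digraph explicit.
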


The notion of bihomotopy leads to an equivalence relation on the set of digraphs in the usual way.  Namely, a homomorphism of digraphs $f:G \rightarrow H$ is a \defi{bihomotopy equivalence} if there exists a homomorphism $g:H \rightarrow G$ such that $g \circ f \bhom id_G$ and $f \circ g \bhom id_H$.  In \cite{MatMor} Matushita considers a notion of \emph{strong homotopy} of $r$-sets, extending the notion of $\times$-homotopy of undirected graphs developed by the first author in \cite{DocHom}.  Recall that a $2$-set recovers the notion of a directed graph, and one can see that strong homotopy equivalence for $2$-sets is equivalent to our notion of bihomotopy equivalence. From \cite{MatMor} we then get the  following characterization of bihomotopy equivalence.  

\begin{prop}\cite[Corollary 5.2]{MatMor}\label{prop:equiv}
Suppose $f:G \rightarrow H$ is a homomorphism of digraphs.  Then the following are equivalent.
\begin{enumerate}
    \item The homomorphism $f$ is a bihomotopy equivalence.
    \item For any digraph $T$, the poset map $f_*:\Hom(T,G) \rightarrow \Hom(T,H)$ is a strong homotopy equivalence.
    \item For any digraph $S$, the poset map $f^*:\Hom(H,S) \rightarrow \Hom(G,S)$ is a strong homotopy equivalence.
\end{enumerate}
\end{prop}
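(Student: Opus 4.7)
The plan is to prove the chain of equivalences by a Yoneda-style argument, exploiting that composition of multihomomorphisms assembles into a poset map
\[\Hom(A,B) \times \Hom(B,C) \to \Hom(A,C), \qquad (\alpha,\beta) \mapsto \beta \circ \alpha,\]
as noted at the start of Section \ref{sec:structural}. The key technical observation is that currying this composition gives, for any digraphs $T$ and $G$, a poset map
\[\Phi_T \colon \Hom(G,G) \to \Poset\bigl(\Hom(T,G),\Hom(T,G)\bigr), \qquad \beta \mapsto \bigl(\alpha \mapsto \beta\circ\alpha\bigr),\]
sending $\id_G$ to $\id_{\Hom(T,G)}$, together with the obvious analogue for pre-composition.

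For (1)$\Rightarrow$(2), suppose $f$ admits a bihomotopy inverse $g\colon H\to G$, so that $\id_G$ and $g\circ f$ lie in a common component of $\Hom(G,G)$. Then $\Phi_T$ carries a path between them to a path from $\id_{\Hom(T,G)}$ to $g_*\circ f_*$ in the poset $\Poset(\Hom(T,G),\Hom(T,G))$, which is precisely the statement that these two poset maps are strongly homotopic. The analogous currying of $f\circ g \bhom \id_H$ yields $f_*\circ g_* \simeq \id$, so $f_*$ is a strong homotopy equivalence. The implication (1)$\Rightarrow$(3) is entirely dual, using pre-composition.

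For (2)$\Rightarrow$(1), specialize $T = H$. Since $f_*\colon \Hom(H,G) \to \Hom(H,H)$ is a strong homotopy equivalence, it induces a bijection on $\pi_0$ of the realizations; the component of $\id_H$ therefore has a preimage component, and since every component of $\Hom(H,G)$ contains a $0$-cell (any vertex of any of its cells), we may select a genuine homomorphism $g\colon H\to G$ with $f\circ g \bhom \id_H$. To obtain $g\circ f \bhom \id_G$, use functoriality of composition to transport this bihomotopy along $f$ on the right, yielding $f\circ g\circ f \bhom f$ in $\Hom(G,H)$; that is, $f_*(g\circ f) \bhom f_*(\id_G)$. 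Injectivity of $f_*$ on $\pi_0$ (a consequence of its being a homotopy equivalence) then forces $g\circ f \bhom \id_G$. The implication (3)$\Rightarrow$(1) is dual: take $S=G$ to produce $g$ with $g\circ f \bhom \id_G$, then use $S=H$ and injectivity of $f^*$ on $\pi_0$ to conclude $f\circ g \bhom \id_H$.

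The main obstacle is keeping track of \emph{strong} homotopy throughout rather than mere topological homotopy; this is precisely what the currying step $\Phi_T$ accomplishes, since it converts a path in $\Hom(G,G)$ into a path in a poset of poset maps with no intermediate rectification required. A minor point to verify carefully is that 0-cells surject onto components of $\Hom(H,G)$, which follows from the fact that each cell is a product of simplices and hence contains honest homomorphisms among its vertices.
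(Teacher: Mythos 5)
Your proof is correct. Note that the paper itself does not prove this proposition---it simply cites \cite[Corollary 5.2]{MatMor}, which establishes the statement in the more general setting of $r$-sets---so there is no in-paper argument to compare against. Your argument is the expected Yoneda-style one, and the key technical move you identified is exactly right: since the composition map $\Hom(A,B)\times\Hom(B,C)\to\Hom(A,C)$ is jointly monotone (the paper records this as a poset map in the bullet list opening Section \ref{sec:structural}), currying it produces a poset map $\Phi_T\colon\Hom(G,G)\to\Poset(\Hom(T,G),\Hom(T,G))$, which turns a zigzag from $\id_G$ to $g\circ f$ in the poset $\Hom(G,G)$ directly into a zigzag from $\id_{\Hom(T,G)}$ to $g_*\circ f_*$ in $\Poset(\Hom(T,G),\Hom(T,G))$; being in the same component of that poset is precisely the paper's definition of ``strongly homotopic,'' so no rectification from topological to strong homotopy is needed. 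The converse is also handled correctly: specializing $T=H$, using $\pi_0$-surjectivity of $f_*$ together with the observation that every nonempty cell of a $\Hom$ complex has genuine homomorphisms among its vertices (any selection of one vertex from each $\alpha(x)$ works), you extract a $0$-cell $g$ with $f\circ g\bhom\id_H$; transporting this relation by $f^*$ and invoking $\pi_0$-injectivity of $f_*$ with $T=G$ then forces $g\circ f\bhom\id_G$. The dual arguments for (3) go through in the same way. This is essentially the argument one would reconstruct from the undirected-graph case in \cite{DocHom}, so while your write-up is a self-contained derivation rather than a citation, it almost certainly runs parallel in structure to Matsushita's proof.
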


For the case of $\times$-homotopy of undirected graphs, foldings were essential in defining certain distinguished elements of a $\times$-homotopy class. We will see that something similar holds for bihomotopy of digraphs.  From Proposition \ref{prop:folding} we have the following.

\begin{lemma}\label{lem:dismant}
Suppose $G$ is a digraph and $v \in G$ is dismantlable. Then the inclusion $G\backslash \{v\} \rightarrow G$ is a bihomotopy equivalence.
\end{lemma}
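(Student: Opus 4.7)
The plan is to exhibit the folding map $f : G \rightarrow G\backslash\{v\}$ as an explicit bihomotopy inverse of the inclusion $i : G\backslash\{v\} \rightarrow G$. Let $w \in V(G)\setminus\{v\}$ be a vertex witnessing dismantlability, so that $\inN_G(v) \subset \inN_G(w)$ and $\outN_G(v) \subset \outN_G(w)$, and let $f$ be the directed folding sending $v \mapsto w$ and fixing every other vertex. Since $f \circ i = \id_{G\backslash\{v\}}$ on the nose, we have $f \circ i \bhom \id_{G\backslash\{v\}}$ trivially, so the remaining task is to show $i \circ f \bhom \id_G$.

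To produce this bihomotopy I would construct a single multihomomorphism $\alpha : V(G) \rightarrow 2^{V(G)}\setminus\{\emptyset\}$ with $\alpha(v) = \{v,w\}$ and $\alpha(x) = \{x\}$ for $x \neq v$, and then observe that both $\id_G$ and $i \circ f$ lie below $\alpha$ in the face poset $P(G,G)$ of $\Hom(G,G)$. Verifying that $\alpha$ is a multihomomorphism amounts to checking, for each edge $(x,y) \in E(G)$, that $\alpha(x) \times \alpha(y) \subset E(G)$, and this splits into four cases: when neither endpoint equals $v$ there is nothing to show; an out-edge $(v,y)$ with $y \neq v$ is handled by $\outN_G(v) \subset \outN_G(w)$, which forces $(w,y) \in E(G)$; in-edges $(x,v)$ with $x \neq v$ are symmetric; and a loop $(v,v)$ (when present) requires that all four pairs in $\{v,w\}\times\{v,w\}$ are edges, which follows by iterating the containments, namely $v \in \outN_G(v) \subset \outN_G(w)$ gives $(w,v) \in E(G)$, whence $w \in \inN_G(v) \subset \inN_G(w)$ yields $(w,w) \in E(G)$, and likewise $(v,w) \in E(G)$. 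Once $\alpha$ is in $P(G,G)$, the vertices $\id_G$ and $i \circ f$ sit in a common closed cell of $\Hom(G,G)$, hence are joined by a straight-line path, so $i \circ f \bhom \id_G$ as required.

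The main obstacle is purely bookkeeping in the looped case; the rest is essentially forced by the dismantlability hypothesis. An alternative route would avoid the explicit construction by combining Proposition \ref{prop:folding} with Proposition \ref{prop:equiv}: since $f_*$ and $f^*$ are strong homotopy equivalences and $f \circ i = \id$ makes $i_*$ a section of $f_*$ and $i^*$ a retraction of $f^*$, one can conclude by two-out-of-three that $i_*$ and $i^*$ are themselves strong homotopy equivalences, and then Proposition \ref{prop:equiv} produces the bihomotopy equivalence. I prefer the direct approach because it gives an explicit one-step bihomotopy and in fact makes the proof of Proposition \ref{prop:folding} partly redundant for this application, replacing it with a single elementary multihomomorphism check.
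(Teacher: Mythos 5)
Your proof is correct, and it takes a genuinely more direct route than the paper's. The paper simply cites Proposition~\ref{prop:folding} (Matsushita's folding theorem, via his framework for $r$-sets) and, implicitly, Proposition~\ref{prop:equiv} to translate ``$f_*$ and $f^*$ are strong homotopy equivalences for all targets'' into ``$f$ is a bihomotopy equivalence,'' then uses $f\circ i = \id$ to transfer the conclusion to $i$ --- this is exactly the alternative route you sketch at the end. Your preferred argument instead produces the bihomotopy $i\circ f \bhom \id_G$ explicitly by writing down the single $1$-cell $\alpha$ of $\Hom(G,G)$ with $\alpha(v)=\{v,w\}$, $\alpha(x)=\{x\}$ otherwise, and verifying it is a multihomomorphism; the case analysis, including the looped case where you iterate $\inN_G(v)\subset\inN_G(w)$ and $\outN_G(v)\subset\outN_G(w)$ to get all four edges among $\{v,w\}$, is complete and correct, and both $\id_G$ and $i\circ f$ are indeed the two endpoints of the edge $\alpha$. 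What your approach buys is self-containedness and transparency: it avoids invoking the full strength of Proposition~\ref{prop:folding} (which concerns $\Hom(H,G)$ and $\Hom(G\backslash\{v\},H)$ for all $H$) and replaces the black box with a one-line elementary check, recovering the basic mechanism that underlies the general folding theorem. What the paper's route buys is economy, since Propositions~\ref{prop:folding} and~\ref{prop:equiv} are already on hand and used elsewhere.
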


A digraph is said to be \defi{stiff} if does not contain any dismantlable vertex.  As a corollary to the above results we get the following, see also Corollary 5.10 from \cite{Mat} for the analogous statement for $r$-sets.

\begin{cor}\label{cor:stiff}
Suppose $G$ and $H$ are both stiff digraphs.  Then a homomorphism $f: G \rightarrow H$ is a bihomotopy equivalence if and only if it is an isomorphism.
\end{cor}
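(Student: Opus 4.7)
The plan is as follows. The ``if'' direction is trivial: any isomorphism $f: G \to H$ admits a genuine two-sided inverse $g$ with $g \circ f = \id_G$ and $f \circ g = \id_H$, and equalities are in particular bihomotopies.

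For the converse, the key claim to establish is: \textit{if $G$ is stiff and $h: G \to G$ is any homomorphism with $h \bhom \id_G$, then $h = \id_G$.} Granting this, suppose $f: G \to H$ is a bihomotopy equivalence with homotopy inverse $g: H \to G$, so that $g \circ f \bhom \id_G$ and $f \circ g \bhom \id_H$. Applying the claim to the stiff graphs $G$ and $H$ yields $g \circ f = \id_G$ and $f \circ g = \id_H$, so $f$ is a graph isomorphism.

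To prove the claim, I would first note that $\Hom(G,G)$ is a regular CW-complex, so any topological path joining two $0$-cells can be deformed to a sequence of edges in the $1$-skeleton traversing successive $0$-cells. It therefore suffices to show that $\id_G$ is adjacent to no other $0$-cell in the $1$-skeleton. An edge of $\Hom(G,G)$ incident to $\id_G$ corresponds to a $1$-dimensional cell, that is, a multihomomorphism $\alpha: V(G) \to 2^{V(G)} \setminus \{\emptyset\}$ of the form $\alpha(v_0) = \{v_0, w\}$ for a unique vertex $v_0$ (with $w \neq v_0$) and $\alpha(v) = \{v\}$ for every $v \neq v_0$. Writing out the multihomomorphism condition $\alpha(x) \times \alpha(y) \subseteq E(G)$ for each $(x,y) \in E(G)$, one reads off that for every $u \in \outN_G(v_0)$ the pair $(w,u)$ is an edge (so $u \in \outN_G(w)$), and symmetrically $\inN_G(v_0) \subseteq \inN_G(w)$. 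By Definition \ref{def:folding} this makes $v_0$ dismantlable in $G$, contradicting stiffness. Hence no such edge exists, $\id_G$ is an isolated $0$-cell in the $1$-skeleton, and the claim follows.

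The main obstacle is essentially careful bookkeeping of the multihomomorphism condition: one must handle the case in which $v_0$ carries a loop (so that $\alpha(v_0) = \{v_0, w\}$ appears on both sides of the edge condition), where the multihomomorphism requirement then forces $w$ also to carry a loop and to be bidirectionally adjacent to $v_0$, exactly as needed to recover $\outN_G(v_0) \subseteq \outN_G(w)$ and $\inN_G(v_0) \subseteq \inN_G(w)$. Once this combinatorial observation is pinned down, stiffness precisely obstructs the local move that would generate a $1$-cell out of $\id_G$, and the global statement on bihomotopy equivalences $G \to H$ follows.
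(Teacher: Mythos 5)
Your proof is correct. The paper itself gives no argument for this corollary, instead deferring to Matsushita's Corollary 5.10 for $r$-sets, but the combinatorial heart of your proof is precisely the observation the authors make (and use) in their proof of Theorem \ref{thm:dismant}: a $1$-cell of $\Hom(G,G)$ incident to $\id_G$ is a multihomomorphism supported at a single vertex $v_0$ with $\alpha(v_0)=\{v_0,w\}$, and spelling out the edge condition $\alpha(x)\times\alpha(y)\subseteq E(G)$ forces $\outN_G(v_0)\subseteq\outN_G(w)$ and $\inN_G(v_0)\subseteq\inN_G(w)$, exhibiting $v_0$ as dismantlable. Your reduction of the corollary to the claim ``$h \bhom \id_G$ with $G$ stiff implies $h=\id_G$,'' the cellular-approximation step from topological paths to edge-paths in the polyhedral complex, and the loop bookkeeping (where $(v_0,v_0)\in E(G)$ yields $(w,w),(v_0,w),(w,v_0)\in E(G)$, so the inclusions of full in/out-neighborhoods still hold) are all accurate. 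In short: you have supplied a self-contained proof where the paper relies on a citation, but your argument is the one the authors clearly had in mind.
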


As a consequence of Corollary \ref{cor:stiff} each bihomotopy class $[G]$ contains a unique (up to isomorphism) stiff representative.  This representative is a single looped vertex ${\bf 1}$ exactly when $G$ is dismantlable, and in this case we have that $\Hom(T,G) \simeq \Hom(T,{\bf 1})$ is collapsible for any graph $T$.  In \cite{BriWinGibbs} Brightwell and Winkler prove that an undirected graph $G$ is \emph{dismantlable} if and only if $\oldHom(T,G)$ is nonempty and connected for any graph $T$. We have something similar in the directed setting.

\begin{thm}\label{thm:dismant}
Suppose $G$ is a digraph.  Then the following are equivalent.

\begin{enumerate}
    \item $G$ is dismantlable (in the directed sense of Definition \ref{def:folding});
    \smallskip
    \item The stiff representative of $[G]$ is a single looped vertex ${\bf 1}$; 
      \smallskip
    \item
    The complex $\Hom(T,G)$ is nonempty and contractible for any graph $T$;
      \smallskip
    \item 
    The complex $\Hom(T,G)$ is nonempty and connected for any graph $T$.
    \end{enumerate}
\end{thm}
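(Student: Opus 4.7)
My plan is to establish the cyclic implications $(1)\Leftrightarrow (2) \Rightarrow (3) \Rightarrow (4) \Rightarrow (1)$. The equivalence $(1)\Leftrightarrow (2)$ will follow from Lemma \ref{lem:dismant} and Corollary \ref{cor:stiff}: iterated foldings realize $G \bhom {\bf 1}$ when $G$ is dismantlable, and since ${\bf 1}$ is trivially stiff, the uniqueness of stiff representatives forces ${\bf 1}$ to be the stiff representative of $[G]$. Conversely, any graph $G$ admits a sequence of foldings down to its unique stiff representative (simply fold any dismantlable vertex until none remain), so if that representative is ${\bf 1}$ then $G$ is dismantlable by definition.

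For $(2) \Rightarrow (3)$ I will apply the folding result iteratively along a sequence $G = G_n \to G_{n-1} \to \cdots \to G_0 = {\bf 1}$ with $G_{i-1} = G_i\setminus\{v_i\}$ and $v_i \mapsto w_i$. Rather than merely invoking the strong homotopy equivalence of Proposition \ref{prop:folding}, I will construct an explicit acyclic matching on $\Hom(T,G_i)$ whose critical cells form the subposet $\Hom(T,G_{i-1})$, following the strategy of Theorem \ref{thm:tourncollapse}. The idea is to compose two matchings: first pair each multihomomorphism $\alpha$ for which $v_i \in \alpha(t_0)$ but $w_i \notin \alpha(t_0)$ (for a fixed ordering and the smallest such $t_0$) with the one obtained by adjoining $w_i$ at $t_0$; then pair each remaining $\alpha$ with $v_i \in \alpha(t_0)$ with the one obtained by removing $v_i$ at $t_0$. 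Both operations remain valid multihomomorphisms thanks to the inclusions $\outN(v_i) \subseteq \outN(w_i)$ and $\inN(v_i) \subseteq \inN(w_i)$ together with loop compatibility. Acyclicity follows as in Theorem \ref{thm:tourncollapse}, and Proposition \ref{prop:dmtmain} delivers a collapse $\Hom(T,G_i)\searrow \Hom(T,G_{i-1})$; composing gives $\Hom(T,G) \searrow \Hom(T,{\bf 1}) = \{\mathrm{pt}\}$. The step $(3) \Rightarrow (4)$ is then immediate since collapsibility implies contractibility and in particular nonempty path-connectedness.

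The main content is the direction $(4) \Rightarrow (1)$, which I will prove by contrapositive. Suppose $G$ is not dismantlable and let $G_0$ be the unique stiff representative of $[G]$, so $G_0 \neq {\bf 1}$. Since $\Hom({\bf 1},G)$ is nonempty by hypothesis, $G$ must contain a looped vertex; as any folding $v\mapsto w$ of a looped $v$ requires $w$ to be looped (so that $(v,v)\mapsto (w,w)$ remains in the image), so does $G_0$, say at some $v^\ast$. Moreover $G_0$ cannot consist of a single unlooped vertex, for then $\Hom(T,G_0)$ (and hence $\Hom(T,G)$ via Proposition \ref{prop:equiv}) would be empty for any $T$ containing an edge, violating (4). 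Thus $G_0$ has at least two vertices, and the constant map $c_{v^\ast}:G_0 \to G_0$ is a homomorphism distinct from ${\rm id}_{G_0}$. Setting $T = G_0$, I will show that ${\rm id}_{G_0}$ is isolated in the $1$-skeleton of $\Hom(G_0,G_0)$: any incident $1$-cell corresponds to a multihomomorphism $\alpha$ agreeing with ${\rm id}_{G_0}$ except at one vertex $v_0$, with $\alpha(v_0) = \{v_0,v_1\}$ and $v_1 \neq v_0$, and the multihomomorphism condition reduces precisely to $\outN(v_0) \subseteq \outN(v_1)$ and $\inN(v_0) \subseteq \inN(v_1)$ (together with loop compatibility), making $v_0$ dismantlable in $G_0$ and contradicting stiffness. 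Hence $\Hom(G_0,G_0)$ is disconnected; by Proposition \ref{prop:equiv} applied to $G \bhom G_0$, so is $\Hom(G_0,G)$, contradicting (4).

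The most delicate step will be the acyclic matching in $(2) \Rightarrow (3)$: upgrading the strong homotopy equivalence of Proposition \ref{prop:folding} to an honest collapse requires careful tracking of how $v_i$ and $w_i$ interact in multihomomorphisms, and verifying acyclicity of the composed two-step matching, especially in the presence of loops at $v_i$ where the folding forces $w_i$ to also be looped.
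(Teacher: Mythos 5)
Your logical skeleton agrees with the paper's ($(1)\Leftrightarrow(2)$, $(2)\Rightarrow(3)\Rightarrow(4)$, $(4)\Rightarrow(1)$), but you take two genuinely different routes that are worth comparing. For $(2)\Rightarrow(3)$ the paper simply invokes Proposition~\ref{prop:folding}: iterated foldings give a chain of strong homotopy equivalences of posets from $\Hom(T,G)$ down to $\Hom(T,\mathbf{1})$, and a finite poset strongly equivalent to the face poset of a simplex yields a collapsible complex. Your explicit discrete-Morse argument is a reasonable way to make this concrete, but as written the bookkeeping is slightly off: if $t_0(\alpha):=\min\{t:v_i\in\alpha(t)\}$, then the single rule ``toggle $w_i$ at $t_0(\alpha)$'' already pairs \emph{every} multihomomorphism containing $v_i$ somewhere, since any $\alpha$ with $v_i,w_i\in\alpha(t_0)$ is matched down to $\alpha$ with $w_i$ deleted at $t_0$, which has the same minimal $t_0$. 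The critical cells of this one matching are exactly $\Hom(T,G_{i-1})$; there is no leftover family for your second step to act on, so your ``composition of two matchings'' should be replaced by a single matching. With that fixed, the acyclicity argument you borrow from Theorem~\ref{thm:tourncollapse} does apply, since $t_0$ is preserved along any would-be cycle and $w_i$ at $t_0$ is the unique element that $\mu(\alpha_t)$ could regain to close it.

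For $(4)\Rightarrow(1)$ the paper argues directly by induction on $|V(G)|$: connectedness of $\Hom(G,G)$ yields a homomorphism $j$ adjacent to $\id_G$, the multihomomorphism condition on the joining $1$-cell unpacks to $\outN_G(v)\subseteq\outN_G(j(v))$ and $\inN_G(v)\subseteq\inN_G(j(v))$ (i.e.\ a fold), and Proposition~\ref{prop:folding} transfers the hypothesis to $G\setminus\{v\}$. You instead prove the contrapositive, passing immediately to the stiff representative $G_0\neq\mathbf{1}$ guaranteed by Corollary~\ref{cor:stiff}, showing that $\id_{G_0}$ is isolated in $\Hom(G_0,G_0)$ by the same $1$-cell computation, and transferring the disconnection back via Proposition~\ref{prop:equiv}. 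Both arguments hinge on the identical observation that a homomorphism at distance one from the identity realizes a fold; yours avoids the induction and is arguably cleaner once the stiff-core machinery is in hand. One detail you handle correctly and the paper leaves implicit: the existence of a constant endomorphism requires a looped vertex, which is forced by hypothesis $(4)$ applied to $T=\mathbf{1}$, since otherwise $\Hom(\mathbf{1},G)$ is empty and in particular not connected.
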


\begin{proof}
The implication $(1) \Rightarrow (2)$ follows from Lemma \ref{lem:dismant}, $(2) \Rightarrow (3)$ follows from Proposition \ref{prop:folding}, and $(3) \Rightarrow (4)$ is immediate.   We prove $(4) \Rightarrow (1)$ by induction on $n = |V(G)|$.    Note that $G$ must have a looped vertex since in particular $\Hom({\bf 1}, G)$ is nonempty. If $G = {\bf 1}$ we are done, and otherwise we have that the constant homomorphism $c: G \rightarrow {\bf 1}$ is distinct from the identity $i:G \rightarrow G$. By assumption we have that $\Hom(G,G)$ is connected and hence there exists a path from $c$ to $i$ in the complex $\Hom(G,G)$. But note that if $j: G \rightarrow G$ is any homomorphism that is adjacent to $i$ in $\Hom(G,G)$ we have some vertex $v \in G$ where $j(v) \neq v$ and where $j(w) = w$ for all $w \neq v$. Hence $G$ admits a folding $G \rightarrow G \backslash \{v\}$.  By our assumption and Proposition \ref{prop:folding} we have that $\Hom(T,G \backslash \{v\})$ is connected for any $T$, and by induction we conclude that $G \backslash \{v\}$ is dismantlable. The result follows.
\end{proof}

\subsection{Other notions of homotopy}

We next consider other constructions of homotopy of digraphs, based on different notions of paths in the exponential digraph $H^G$.  For the first we take advantage of the fact that $H^G$ is itself a directed graph.  Recall that for a digraph $T$ we use $T^{{o}}$ to denote the subgraph of $T$ induced on the looped vertices.  We note that the vertices of $(H^G)^{{o}}$ correspond to the graph homomorphisms $G \rightarrow H$.

\begin{defn}
Suppose $f,g:G \rightarrow H$ are homomorphisms of digraphs.  We say that $f$ and $g$ are \defi{dihomotopic}, denoted $f \dhom g$, if there exists a directed path from $f$ to $g$ in the graph $(H^G)^{{o}}$.
\end{defn}

We note that $\dhom$ is reflexive and transitive, but does not define an equivalence relation since it is not symmetric.  Also note that if we let $\overrightarrow{I}_n$ denote the directed path on $n+1$ looped vertices, then $f \dhom g$ if and only if there exists an integer $n \geq 1$ and a graph homomorphism $F: G \times \overrightarrow{I}_n \rightarrow H$ with $F(x,0) = f(x)$ and $F(x,n) = g(x)$.

Dihomotopy has some perhaps unexpected properties.  For example, if  $f_i, f_j:{\bf 1} \rightarrow \overrightarrow{I}_n$ are the inclusions of of a looped vertex ${\bf 1}$ given by ${\bf 1} \mapsto i$, ${\bf 1} \mapsto j$, then $f_i \dhom f_j$ if and only if $i \leq j$.  Similarly, it is not entirely clear how one should define `dihomotopy equivalence' between two digraphs.  

For our third (and final) notion of homotopy, we allow \emph{any} path in the underlying undirected graph $(H^G)^{{o}}$.

\begin{defn}
Suppose $f,g:G \rightarrow H$ are homomorphisms of digraphs. Then $f$ and $g$ are \defi{line-homotopic}, denoted $f \lhom g$, if there exists a path from $f$ to $g$ in the underlying undirected graph of $(H^G)^{{o}}$.
\end{defn}

The existence of a homotopy $f \lhom g$ is again equivalent to a digraph map $F:G \times I_n \rightarrow H$ with the desired projections.  In this case $I_n$ is a path on $n+1$ looped vertices $\{0,1, \dots, n\}$ where each edge appears in one of \emph{either} direction (so that for all $i$ we have exactly one of $(i,i+1)$ or $(i+1,i)$ is an edge ).   This approach to intervals is also taken in \cite{GLMY}, where the resulting notion of homotopy is based on the \emph{Cartesian product} of digraphs.  In \cite{GLMY} the graph $I_n$ is called a \defi{line digraph} and the set of all line digraphs on $n+1$ vertices is denoted by ${\mathcal I}_n$.  We refer to figure \ref{fig:intervals} for an illustration of the various notions of interval graphs discussed so far.

The homotopy theory of digraphs developed in \cite{GLMY} was inspired by a \emph{homology} theory of digraphs introduced by the authors in previous papers.  In \cite{GLMY} it was shown that these theories interact in the expected ways, including invariance of homology under homotopy equivalence. In addition, the theory extends the $A$-homotopy of (undirected) graphs studied in \cite{BBDL} to the setting of digraphs. In a similar way the line-homotopy theory of digraphs described here can be seen as a generalization of the $\times$-homotopy of \cite{DocHom} to the setting of digraphs.  We do not know if there is a corresponding homology theory in our context, see Section \ref{sec:higherhom} for more discussion.

    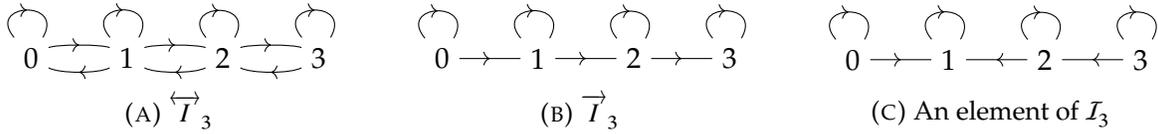
\begin{figure}[H]
\begin{subfigure}[]{0.325\textwidth}
    \centering
    \begin{tikzpicture}[scale=0.85]
\node (a) at (0,0) {0};
\node (b) at (1.5,0) {1};
\node (c) at (3,0) {2};
\node (d) at (4.5,0) {3};
%\draw[->-=.5] (a) to [out=120,in=60,looseness=10] (a);

\draw[->-=.5] (a) to [out=30,in=150,looseness=0.3] (b);
\draw[->-=.5] (b) to [out=210,in=330,looseness=0.5] (a);
\draw[->-=.5] (b) to [out=30,in=150,looseness=0.3] (c);
\draw[->-=.5] (c) to [out=210,in=330,looseness=0.5] (b);
\draw[->-=.5] (c) to [out=30,in=150,looseness=0.3] (d);
\draw[->-=.5] (d) to [out=210,in=330,looseness=0.5] (c);

\draw[->-=.5] (a) to [out=130,in=60,looseness=4] (a);
\draw[->-=.5] (b) to [out=130,in=60,looseness=4] (b);
\draw[->-=.5] (c) to [out=130,in=60,looseness=4] (c);
\draw[->-=.5] (d) to [out=130,in=60,looseness=4] (d);
\end{tikzpicture}
\caption{$\overleftrightarrow{I}_3$}
\end{subfigure}
\begin{subfigure}[]{0.325\textwidth}
    \centering
    \begin{tikzpicture}[scale=0.85]
\node (a) at (0,0) {0};
\node (b) at (1.5,0) {1};
\node (c) at (3,0) {2};
\node (d) at (4.5,0) {3};

\draw[->- =.5] (a)--(b) node[midway,above right] {};
\draw[->-=.5] (b)--(c) node[midway,above right] {};
\draw[->-=.5] (c)--(d) node[midway,above right] {};

\draw[->-=.5] (a) to [out=130,in=60,looseness=4] (a);
\draw[->-=.5] (b) to [out=130,in=60,looseness=4] (b);
\draw[->-=.5] (c) to [out=130,in=60,looseness=4] (c);
\draw[->-=.5] (d) to [out=130,in=60,looseness=4] (d);
% \draw[->] (b) edge (d) (c) edge (d);
\end{tikzpicture}
\caption{$\overrightarrow{I}_3$}
\end{subfigure}
\begin{subfigure}[]{0.325\textwidth}
    \centering
    \begin{tikzpicture}[scale=0.85]
\node (a) at (0,0) {0};
\node (b) at (1.5,0) {1};
\node (c) at (3,0) {2};
\node (d) at (4.5,0) {3};

\draw[->- =.5] (a)--(b) node[midway,above right] {};
\draw[->- =.5] (c)--(b) node[midway,above right] {};
\draw[->- =.5] (d)--(c) node[midway,above right] {};

\draw[->-=.5] (a) to [out=130,in=60,looseness=4] (a);
\draw[->-=.5] (b) to [out=130,in=60,looseness=4] (b);
\draw[->-=.5] (c) to [out=130,in=60,looseness=4] (c);
\draw[->-=.5] (d) to [out=130,in=60,looseness=4] (d);
% \draw[->] (b) edge (d) (c) edge (d);
\end{tikzpicture}
%\vspace{0.1cm}
\caption{An element of ${\mathcal I}_3$}
\end{subfigure}
\caption{The various interval objects involved in homotopy of digraphs.}\label{fig:intervals}
\end{figure}

\begin{prop}\label{prop:homstrength}
Suppose $f,g: G \rightarrow H$ are homomorphisms of digraphs.  Then we have
\[f \bhom g \Rightarrow f \dhom g  \Rightarrow f \lhom g.\]
\noindent
Furthermore these implications are strict.
\end{prop}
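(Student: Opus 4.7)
The plan is to prove each implication separately using the descriptions of $\bhom$, $\dhom$, and $\lhom$ as paths of various sorts in or between looped vertices of $H^G$, and then to construct small counterexamples to the converses. The second implication is immediate since a directed path is in particular a path in the underlying undirected graph, so the work concentrates on the first implication and on the strictness examples.

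For $f \bhom g \Rightarrow f \dhom g$, I would first reduce the problem to analyzing $1$-cells of $\Hom(G,H)$. Since $\Hom(G,H)$ is a regular CW complex, two $0$-cells lie in the same path component iff they are connected by an edge path; thus it suffices to show that any $1$-cell joining homomorphisms $f_0$ and $f_1$ produces a directed edge from $f_0$ to $f_1$ in $(H^G)^{\tiny{o}}$. Such a $1$-cell is a multihomomorphism $\alpha$ with $\alpha(v_0) = \{f_0(v_0), f_1(v_0)\}$ at a unique vertex $v_0 \in V(G)$ and $\alpha(v) = \{f_0(v)\} = \{f_1(v)\}$ elsewhere. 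Verifying that $(f_0, f_1)$ is a directed edge in $H^G$ reduces to checking, for each $(v,v') \in E(G)$, that $(f_0(v), f_1(v')) \in E(H)$. Whenever at least one of $v,v'$ differs from $v_0$, the conclusion follows from $f_0$ or $f_1$ being a homomorphism together with the agreement of $f_0$ and $f_1$ off $v_0$. The main obstacle is the case $v = v' = v_0$, which only arises when $v_0$ carries a loop in $G$, and which is handled exactly by the multihomomorphism condition $\alpha(v_0)\times\alpha(v_0)\subseteq E(H)$. Concatenating the directed edges produced along the edge path then yields the required directed path in $(H^G)^{\tiny{o}}$, establishing $f \dhom g$.

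For strictness, I would take $G = \mathbf{1}$ in both examples so that $H^G \cong H$ and all three relations translate to transparent questions about paths in $H$. To separate $\dhom$ from $\bhom$, let $H$ have vertex set $\{0,1\}$ with loops at each vertex and the single non-loop edge $0 \to 1$. Then $f_0 \dhom f_1$ via that edge, but any $1$-cell of $\Hom(\mathbf{1},H)$ joining $f_0$ and $f_1$ would correspond to a multihomomorphism with $\alpha(*) = \{0,1\}$, which demands the missing reverse edge $1 \to 0$; hence $f_0$ and $f_1$ lie in distinct components of $\Hom(\mathbf{1},H)$. To separate $\lhom$ from $\dhom$, let $H$ have vertex set $\{a,b,c\}$, loops at each vertex, and non-loop edges $(a,c)$ and $(b,c)$. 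Then $f_a - f_c - f_b$ is a path in the underlying undirected graph of $(H^G)^{\tiny{o}}$, but $f_c$ has no outgoing non-loop edges, so no directed path from $f_a$ to $f_b$ exists in $(H^G)^{\tiny{o}}$.
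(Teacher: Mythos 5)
Your proof is correct, and it fills in a detail the paper glosses over while choosing simpler witnesses for strictness. For the implication $\bhom \Rightarrow \dhom$, the paper simply invokes the (earlier stated, unproved) equivalence of $\bhom$ with the existence of a bidirected path in $(H^G)^{\tiny{o}}$ and observes the containment of path classes; you instead go back to the definition of $\Hom(G,H)$ as a CW complex and verify directly that any $1$-cell joining $f_0$ and $f_1$ yields the directed edge $(f_0,f_1)$ in $H^G$ (and symmetrically $(f_1,f_0)$), handling the looped-vertex case via the multihomomorphism condition $\alpha(v_0)\times\alpha(v_0)\subseteq E(H)$. This is more self-contained and essentially supplies a proof of the unproved proposition preceding Proposition \ref{prop:homstrength}. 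For strictness, the paper constructs a single digraph $H$ and takes $G$ to be a bidirected (loopless) edge, so that one family of six homomorphisms simultaneously separates $\bhom$ from $\dhom$ and $\dhom$ from $\lhom$; you instead set $G=\mathbf{1}$ so that $H^G\cong H$, which reduces all three relations to transparent statements about paths in $H$ itself, and you give two tiny reflexive digraphs (on two and three vertices) separating the relations. Your examples are easier to verify and conceptually cleaner, at the cost of needing two graphs rather than one; the paper's single example is more economical to draw. Both are fully correct.
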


\begin{proof}
We have seen that $f \bhom g$ if and only if the there exists a path along bidirected edges in the graph $(H^G)^{{o}}$.  The definitions of $\dhom$ and $\lhom$ similarly involve increasingly larger classes of paths in $(H^G)^{{o}}$, and hence the implications hold.  To see that the implications are in fact strict, we refer to the graphs $G$ and $H$ depicted in Figure \ref{fig:directedhomotopy}. In this case the complex $\Hom(G,H)$ consists of 6 isolated points and hence each graph homomorphism $G \rightarrow H$ sits in its own bihomotopy equivalence class. However the directed edges in the graph $H^G$ leads to other notions of homotopies. For instance, if $f=(0,1)~, g=(3,2)$ and $h=(4,5)$ then $f \lhom h $ but $f$ is not dihomotopic to $h$ and $f\dhom g$ but $f$ is not bihomotopic to $g$. Here we let $G = (a,b)$ and use $f=(x,y)$ to denote the images of $(a,b)$ in a homomorphism $f:G \rightarrow H$.
\end{proof}
 
 \begin{figure}[H]
\begin{subfigure}[]{0.2\textwidth}
\vspace{1.5cm}
    \centering
    
    \begin{tikzpicture}[scale=0.85]
\node (a) at (0,0) {a};
\node (b) at (2,0) {b};
\draw[->-=.5] (a) to [out=30,in=150,looseness=0.5] (b);
\draw[->-=.5] (b) to [out=210,in=330,looseness=0.5] (a);
% \draw[->] (b) edge (d) (c) edge (d);
\end{tikzpicture}
\vspace*{1.15cm}
\caption{The graph $G$}
\end{subfigure}
\begin{subfigure}[]{0.2\textwidth}
    \centering
    \begin{tikzpicture}[scale=0.85]
\node (a) at (0,0) {2};
\node (b) at (0,1.5) {0};
\node (c) at (2,1.5) {1};
\node (d) at (2,0) {3};
\node (e) at (0,-1.5) {4};
\node (f) at (2,-1.5) {5};

\draw[->- =.5] (b)--(a) node[midway,above right] {};
\draw[->- =.5] (c)--(d) node[midway,above right] {};
\draw[->-=.5] (b) to [out=30,in=150,looseness=0.3] (c);
\draw[->-=.5] (c) to [out=210,in=330,looseness=0.5] (b);
\draw[->-=.5] (a) to [out=30,in=150,looseness=0.3] (d);
\draw[->-=.5] (d) to [out=210,in=330,looseness=0.5] (a);
\draw[->- =.5] (e)--(a) node[midway,above right] {};
\draw[->- =.5] (f)--(d) node[midway,above right] {};
\draw[->-=.5] (e) to [out=30,in=150,looseness=0.3] (f);
\draw[->-=.5] (f) to [out=210,in=330,looseness=0.5] (e);
% \draw[->] (b) edge (d) (c) edge (d);
\end{tikzpicture}
\vspace{0.1cm}
\caption{The graph $H$}
\end{subfigure}
\begin{subfigure}[]{0.25\textwidth}
    \centering
    \vspace{0.5cm}
    \begin{tikzpicture}[scale=0.8]
\node (d) at (0,1.75) {{\tiny (0,1)}};
\node (e) at (1.5,.25) {{\tiny (2,3)}};
\node (f) at (0,.25) {{\tiny (1,0)}};
\node (g) at (1.5,1.75) {{\tiny (3,2)}};
\node (g) at (3,1.75) {{\tiny (4,5)}};
\node (g) at (3,0.25) {{\tiny (5,4)}};
\node[fill,circle,inner sep=1.5pt,minimum size=1pt] (b) at (1.5,0) {};
\node[fill,circle,inner sep=1.5pt,minimum size=1pt] (c) at (0,0) {};
\node[fill,circle,inner sep=1.5pt,minimum size=1pt] (a) at (0,1.5) {};
\node[fill,circle,inner sep=1.5pt,minimum size=1pt] (i) at (1.5,1.5) {};
\node[fill,circle,inner sep=1.5pt,minimum size=1pt] (j) at (3,1.5) {};
\node[fill,circle,inner sep=1.5pt,minimum size=1pt] (k) at (3,0) {};
% \draw[->] (b) edge (d) (c) edge (d);
\end{tikzpicture}
\vspace{1cm}
\caption{$\Hom(G,H)$}
\end{subfigure}
\begin{subfigure}[]{0.33\textwidth}
    \centering
    \begin{tikzpicture}[scale=0.85]
\node (a) at (-2,2) {{\tiny (0,1)}};
\node (b) at (0,0) {{\tiny (2,3)}};
\node (c) at (-2,0) {{\tiny (1,0)}};
\node (d) at (0,2) {{\tiny (3,2)}};
\node (e) at (2,2) {{\tiny (4,5)}};
\node (f) at (2,0) {{\tiny (5,4)}};
\draw[->- =.5] (a)--(d) node[midway,above right] {};
\draw[->- =.5] (c)--(b) node[midway,above right] {};
\draw[->- =.5] (e)--(d) node[midway,above right] {};
\draw[->- =.5] (f)--(b) node[midway,above right] {};
\draw[->-=.5] (a) to [out=130,in=60,looseness=8] (a);
\draw[->-=.5] (b) to [out=130,in=60,looseness=8] (b);
\draw[->-=.5] (c) to [out=130,in=60,looseness=8] (c);
\draw[->-=.5] (d) to [out=130,in=60,looseness=8] (d);
\draw[->-=.5] (e) to [out=130,in=60,looseness=8] (e);
\draw[->-=.5] (f) to [out=130,in=60,looseness=8] (f);
% \draw[->] (b) edge (d) (c) edge (d);
\end{tikzpicture}
\vspace{0.1cm}
\caption{The graph $(H^G)^{{o}}$}
\end{subfigure}
   \caption{An illustration of the implications of our various homotopies.}\label{fig:directedhomotopy}
\end{figure}
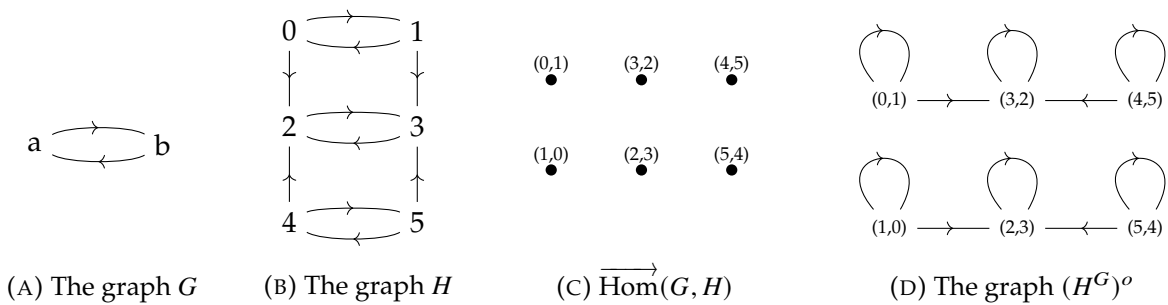

\subsection{Higher homotopies}\label{sec:higherhom}
We have seen that bihomotopies of digraph homomorphisms correspond to paths in the one skeleton of the complex $\Hom(G,H)$.  Hence we can take advantage of the topology of $\Hom(G,H)$ to define higher categorical structures.  In particular, our results from Section \ref{sec:structural} implies that the category of directed graphs is \emph{enriched} over the category of posets (and hence topological spaces). For the case of undirected graphs, this is further studied in \cite{Doc} and \cite{DocSch}.

It would be interesting to construct a topological space that extends paths for our other notions of of homotopy. In the case of line homotopy, a natural candidate is provided by the \defi{directed clique complex} of a directed graph.  This notion has recently found applications in network theory, see for instance \cite{GPCI} and \cite{MasVil}.  To recall the definition suppose $G$ is a digraph. Then the \defi{directed clique complex} $\overrightarrow{X}(G)$ is the simplicial complex on vertex set $V(G)$ with $(n-1)$-simplices given by all \emph{directed} $n$-cliques:
\[\{(v_1, \dots, v_n):(v_i,v_j) \in E(G) \text{ for all $1 \leq i < j \leq n$}\}.\]

\noindent
Note that a directed $n$-clique is simply a subgraph of $G$ that is isomorphic to $\tourn$.  Note that the edges in $\overrightarrow{X}((H^G)^{{o}})$ are precisely the (directed) edges in $(H^G)^{{o}}$ so a path in $\overrightarrow{X}((H^G)^{{o}})$ corresponds precisely to a (not necessarily) directed path in $(H^G)^{{o}}$. 

The notion of line-homotopy equivalence between homomorphisms of digraphs naturally leads to a definition of line-homotopy equivalence of digraphs $G \lhom H$, and it is an open question how this relates to the topology of the directed clique complex.
For instance, if $G \lhom H$ are line homotopy equivalent digraphs, is it true that $\overrightarrow{X}((G^T)^{{o}})$ and $\overrightarrow{X}((H^T)^{{o}})$ are homotopy equivalent for any digraph $T$? More generally, do the results from Proposition \ref{prop:equiv} hold in this setting? If so this would lead to a homology theory for digraphs that is invariant under line homotopy equivalence.

\begin{rem}
The $\times$-homotopy theory for undirected graphs developed in \cite{Doc} has close connections to the notion of \emph{strong homotopy type} as introduced by Barmak and Minian in \cite{BarMin}. Here one can see that the strong collapse of a simplicial complex from \cite{BarMin}, when restricted to a clique complex $X(G)$ of a graph $G$, correspond to \emph{foldings} of $G$.  In \cite{Doc} it is shown that foldings of a graph $G$ in generate the $\times$-homotopy type of a graph and in particular preserve the homotopy type of $\oldHom(-,G)$. It would be interesting to see how the notion of directed homotopy and directed foldings can be generalized to a setting of directed simplicial complexes.
\end{rem}

\section{Further thoughts}\label{sec:further}

In this section we discuss some areas of future research and collect some open questions.

\subsection{Topology of complexes into tournaments}

 In Section \ref{sec:tourn} we saw that if $\tourn$ is the transitive tournament on $n$ vertices then for any digraph $G$ the complex $\Hom(G,\tourn)$ is empty or contractible. 
On the other hand we saw in Lemma \ref{prop:spheres} that for any $n$ there exists a tournament $T_{2n+3}$ on $2n+3$ vertices such that $\N(T_{2n+3})$ is homotopy equivalent to an $n$-sphere ${\mathbb S}^n$. What are the other homotopy types that can be achieved?  For instance, we do not know of any tournament $T_n$ for which the neighborhood complex $\N(T_n)$ has torsion in its homology. In a related question, does the topology of $\N(T_n)$ say something about the combinatorics of the tournament?

In the context of undirected graphs, the maximum degree of a graph $G$ influenced the topology of $\oldHom(G,K_n)$.  In particular, \v{C}uk\'ic and Kozlov \cite{CukKozHig} proved that if $G$ has max degree $d$ then $\oldHom(G,K_n)$ is at least $(n-d-2)$-connected.  A stronger result in terms of \emph{degeneracy} of $G$ was established by Malen in \cite{Mal}. We note that something similar cannot hold for digraphs, at least for arbitrary tournaments.  In particular, it is not hard to see that for any $n$ there exists a tournament $T_n$ for which $\N(T_n) \simeq \Hom(K_2,T_n)$ is disconnected.

\subsection{Obstructions to graph homomorphisms}

In the context of undirected graphs, the topology of the neighborhood (and more general homomorphism) complexes of a graph $G$ leads to lower bounds on the chromatic number $\chi(G)$. In particular, by results of Lov\'asz \cite{Lov} and Babson-Kozlov \cite{BabKozPro}, we have for any graph $G$ the following bounds:
\begin{equation}\label{eq:bounds}
\begin{split}
 \chi(G) &\geq \conn({\mathcal N}(G)) + 3 = \conn(\oldHom(K_2,G)) +3;\\
\chi(G) &\geq \conn(\oldHom(C_{2r+1},G)) + 4.
\end{split}
\end{equation}

Here $C_{2r+1}$ denotes an odd cyle on $2r+1$ vertices, and for a topological space $X$, $\conn(X)$ is the \emph{connectivity} of $X$. These results follow from an obstruction theory for graph homomorphisms that utilizes the ${\mathbb Z}_2$-equivariant topology of the relevant Hom complexes.  In this context the free group action on the topological spaces is induced by a ${\mathbb Z}_2$-action on the graphs $K_2$ and $C_{2r+1}$. By taking $G$ to be $K_2$ (resp. $C_{2r+1}$) one can see that the bounds in Equation \ref{eq:bounds} are tight. In the language of Hom complexes we say that $K_2$ and $C_{2r+1}$ are \defi{test graphs} (see \cite{DocSch} and \cite{KozChrom}).

It is a natural question to ask whether there is a similar obstruction theory coming from the homomorphism complexes of digraphs.  In this context there is no nontrivial action on the directed edge $K_2$ and new tools seem to be required. However, if we view the edge $K_2$ as a directed $2$-cycle $C_2$, the next natural nontrivial group action arises if we consider the oriented cycle $\overrightarrow{C}_3$ as a potential test graph.  We then have a free ${\mathbb Z}_3$ group action on $\overrightarrow{C}_3$ which induces a free ${\mathbb Z}_3$-action on the complex $\Hom(\overrightarrow{C}_3, G)$ for the case of loop-free $G$. Since $\Hom(\overrightarrow{C}_3,-)$ is functorial, we can then employ Dold's Theorem (see for instance \cite{Mat}).

\begin{thm}[Dold]
Let $\Gamma$ be a non-trivial finite group. Suppose $X$ and $Y$ are $\Gamma$-complexes where the action of $\Gamma$ on $Y$ is free and $\dim Y \leq n$. Then if $X$ is $n$-connected there cannot exist a continuous $\Gamma$-equivariant map $X \rightarrow Y$.
\end{thm}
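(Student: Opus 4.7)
The approach I would take is the classical one: derive a contradiction via equivariant cohomology by applying the Borel construction $(-)_\Gamma := E\Gamma \times_\Gamma (-)$. First I would reduce to the case where $\Gamma = C_p$ is cyclic of prime order. By Cauchy's theorem, any nontrivial finite group contains a subgroup $C$ of prime order $p$ dividing $|\Gamma|$; the restricted $C$-action on $Y$ remains free, the connectivity of $X$ and the dimension of $Y$ are unchanged, and any $\Gamma$-equivariant map is a fortiori $C$-equivariant. So it suffices to argue under the additional assumption that $\Gamma = C_p$.

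Assume for contradiction that $f : X \to Y$ is $\Gamma$-equivariant, and apply the Borel construction to obtain $f_\Gamma : X_\Gamma \to Y_\Gamma$ commuting with the canonical projections down to $B\Gamma$. Two complementary facts drive the proof. First, since the $\Gamma$-action on $Y$ is free, the projection $Y_\Gamma \to Y/\Gamma$ collapsing the contractible $E\Gamma$-factor is a homotopy equivalence, and the quotient CW complex $Y/\Gamma$ inherits dimension at most $n$; consequently $H^{n+1}(Y_\Gamma;\mathbb{F}_p) = 0$. Second, the fibration $X \to X_\Gamma \to B\Gamma$ has $n$-connected fiber, so its long exact sequence in homotopy shows that the structure map $X_\Gamma \to B\Gamma$ is an $(n+1)$-equivalence; its mapping cone is then $(n+1)$-connected, so the long exact sequence in cohomology yields an injection $H^{n+1}(B\Gamma;\mathbb{F}_p) \hookrightarrow H^{n+1}(X_\Gamma;\mathbb{F}_p)$.

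These two facts combine to force a contradiction: since the composite $X_\Gamma \xrightarrow{f_\Gamma} Y_\Gamma \to B\Gamma$ equals the structure map $X_\Gamma \to B\Gamma$, on cohomology the above injection factors through the vanishing group $H^{n+1}(Y_\Gamma;\mathbb{F}_p) = 0$, forcing $H^{n+1}(B\Gamma;\mathbb{F}_p) = 0$. But for $\Gamma = C_p$ the mod-$p$ cohomology of $BC_p$ is $\mathbb{F}_2[x]$ when $p=2$ and $\mathbb{F}_p[x] \otimes \Lambda(y)$ with $|x|=2$, $|y|=1$ when $p$ is odd, both of which are nonzero in every nonnegative degree -- contradicting the vanishing just derived.

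The main technical obstacle is the cohomological injectivity in step two, since it depends on the relative Hurewicz theorem applied to the mapping cone of $X_\Gamma \to B\Gamma$; one must verify that $(n+1)$-equivalence in homotopy gives an $(n+1)$-connected cofiber, after which the cohomology long exact sequence finishes the job. The other ingredients -- the reduction to cyclic prime order, the identification $Y_\Gamma \simeq Y/\Gamma$ for free $\Gamma$-actions, and the computation of $H^*(BC_p;\mathbb{F}_p)$ -- are standard inputs from equivariant homotopy theory and group cohomology, and pose no real difficulty.
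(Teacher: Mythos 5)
The paper does not prove Dold's theorem; it states it and cites Matou\v{s}ek's book \cite{Mat} as a reference. Your argument is therefore not a reproduction of anything in the paper, but it is a correct and entirely standard proof of Dold's theorem via the Borel construction, so there is nothing to compare it against other than to check it on its own terms.

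The logic is sound: the reduction to $\Gamma = C_p$ by Cauchy, the identification $H^*(Y_\Gamma;\mathbb{F}_p) \cong H^*(Y/\Gamma;\mathbb{F}_p)$ (valid because free actions of finite groups on CW complexes give covering spaces, so $E\Gamma \times_\Gamma Y \to Y/\Gamma$ is a fibration with contractible fiber), the vanishing of $H^{n+1}(Y/\Gamma;\mathbb{F}_p)$ for dimension reasons, the factorization of the classifying map through $Y_\Gamma$, and the non-vanishing of $H^{n+1}(BC_p;\mathbb{F}_p)$ are all correct and combine to give the contradiction. The one place where you rightly flag a subtlety --- the claim that the cofiber of the $(n{+}1)$-equivalence $X_\Gamma \to B\Gamma$ is $(n{+}1)$-connected --- is genuinely delicate, because $B\Gamma$ is not simply connected, so the naive relative Hurewicz theorem does not apply directly. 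It does go through: since $\pi_1(X_\Gamma) \to \pi_1(B\Gamma)$ is an isomorphism, van Kampen gives simple connectivity of the cofiber, and the $\pi_1$-equivariant form of the relative Hurewicz theorem (with the $\pi_1(X_\Gamma)$-action on the relative homotopy groups) gives vanishing of $H_i$ for $i \le n+1$. But you would save yourself this entire discussion by instead running the Serre spectral sequence of the fibration $X \to X_\Gamma \to B\Gamma$ with $\mathbb{F}_p$ coefficients: since the fiber is $n$-connected, the columns $1 \le t \le n$ of $E_2^{s,t} = H^s(B\Gamma; H^t(X;\mathbb{F}_p))$ vanish, so $E_2^{n+1,0}$ receives and emits no differentials and the edge map $H^{n+1}(B\Gamma;\mathbb{F}_p) \to H^{n+1}(X_\Gamma;\mathbb{F}_p)$ is injective immediately. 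That route avoids any homotopy-theoretic input about the cofiber and is the cleaner way to nail down the one step you identified as the technical obstacle. One should also dispose of the case $n = 0$ separately (there the Hurewicz step degenerates, but the statement is elementary: a connected $X$ must map to a single point of the discrete space $Y$, contradicting freeness), and note that ``$\Gamma$-complex'' is implicitly taken to mean the action is cellular so that $Y/\Gamma$ is again a CW complex of dimension at most $n$.
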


Knowledge of the dimension of $\Hom(\overrightarrow{C}_3,H)$ for any digraph $H$ then leads to an obstruction theory for homomorphisms $G \rightarrow H$. For example, in Figure \ref{fig:example2} we have a tournament $T_5$ with the property that $\Hom(\overrightarrow{C}_3,T_5)$ is 1-dimensional.  Hence if $G$ is any digraph with simply-connected $\Hom(\overrightarrow{C}_3,G)$ then there cannot exist a homomorphism $G \rightarrow T_5$. Similarly for $T_7$ in Figure \ref{fig:example4} we have that if $\Hom(\overrightarrow{C}_3,G)$ is $2$-connected there cannot exist a homomorphism $G \rightarrow T_7$.

\subsection{Other directions}

There are a number of other properties and applications of $\Hom$ complexes that one could explore.  Some of these have been mentioned in previous sections but we collect them here.

\begin{itemize}

    \item \emph{Other graph classes} - In addition to tournaments it would be interesting to investigate the neighborhood complexes of other classes of digraphs. Do the combinatorial properties of these digraphs play a role in the topology (and diameter) of the resulting complexes?   Natural candidates include Eulerian digraphs, digraphs of bounded width, etc.
    
    \smallskip
    \item \emph{Directed cops and robbers} - In the undirected setting one defines a pursuit-evasion game on the vertices of a graph $G$, and it is known that a finite $G$ is \emph{cop-win} if and only if $G$ is dismantlable \cite{NowWin}.  We do not know if there is a similar characterization (or even a similar game) for digraphs.
    
    \smallskip
    \item \emph{Gibbs measures and long range action} - In \cite{BriWinGibbs} Brightwell and Winkler study a notion of \emph{long range action} as well as Gibbs measures on the set of graph homomorphisms between undirected graphs.  They again relate these properties to dismantlable graphs and chromatic number.  We do not know if similar constructions apply in the directed setting.
    
    \smallskip
\item \emph{Homology theories} - As we have discussed, the homotopy theory of digraphs introduced in \cite{GLMY} was developed in tandem with an underlying homology theory.  These constructions can be seen as digraphs analogues of $A$-theory for undirected graphs studied in \cite{BBDL}.  In all of these cases, the underlying product is the cartesian product of (di)graphs.  Our three notions of homotopy described in Section \ref{sec:homotopy} are based on the categorical product and lead to new constructions. For instance, for any two digraphs $G$ and $H$ we can define the $G$-bihomology groups of $H$ to be $\tilde H_i(\Hom(G,H))$.  Our results from Section \ref{sec:homotopy} then imply that $G$-homology is invariant under bihomotopy equivalence of digraphs.  Similarly, we can define the $G$-line homology groups of $H$ to be $\tilde{H}_i( \overrightarrow{X}((H^G)^{{o}}))$ which conjecturally (see Section \ref{sec:higherhom}) is invariant under line homotopy equivalence of digraphs.  Similar constructions were recently considered by Bubenik and Mili\'cevi\'c in \cite{BubMil} although we are not aware of the precise connection.
    
    \smallskip
    \item \emph{Model categories} - In any category with a notion of equivalence, a natural question to ask if one can impose a model category structure.  In the context of undirected graphs, Matsushita \cite{MatBox} described a model category structure using ${\mathbb Z}_2$-homotopy equivalence of neighborhood complexes as weak equivalences.   In \cite{Dro} Droz constructed model categories with other notions of weak equivalences. On the other hand, in \cite{GoySan} Goyal and Santhanam showed no model category structure exists if one takes $\times$-homotopy as weak equivalence and inclusions as cofibrations.  It is an open question whether one can use any of the three notions of homotopy equivalence for digraphs discussed here in the context of a model category structure.  Model categories have been studied in the context of directed homotopy for instance in work of Gaucher \cite{Gau}.
    
    \smallskip
    \item \emph{Stanley-Reisner rings} - For any simplicial complex $X$ on vertex set $[n]$, one defines the \emph{Stanley-Reisner} ideal $I(X) \subset R = {\mathbb K}[x_1, \dots, x_n]$ as the monomial ideal in $R$ generated by nonfaces of $X$. In Corollary \ref{cor:Leray} we saw that if $G$ is a simple digraph on at most $m = 2n+2$ vertices then the neighborhood complex $\N(G)$ is $n$-Leray.  This has consequences for the ideal $I(
    \N(G))$, and in particular the regularity of the factor ring $R/I(\N(G))$.  It would be interesting to study other algebraic invariants of these rings, and to relate them to combinatorial properties of the underlying digraph $G$.
    
\end{itemize}

\end{document}